\journal{ }
\numberwithin{equation}{section}
\def\fg{\mathfrak{g}}
\def\fo{\mathfrak{o}}
\def\fs{\mathfrak{s}}
\def\fW{\mathfrak{W}}
\def\cA{\mathcal{A}}
\def\cF{\mathcal{F}}
\def\cK{\mathcal{K}}
\def\cL{\mathcal{L}}
\def\cM{\mathcal{M}}
\def\cR{\mathcal{R}}
\def\cS{\mathcal{S}}
\def\cT{\mathcal{T}}
\def\cW{\mathcal{W}}
\def\sB{\mathscr{B}}
\def\sF{\mathscr{F}}
\def\sJ{\mathscr{J}}
\def\sW{\mathscr{W}}
\def\bC{\mathbb{C}}
\def\bM{\mathbb{M}}
\def\bN{\mathbb{N}}
\def\bZ{\mathbb{Z}}
\def\ra{\mathrm{a}}
\def\rd{\mathrm{d}}
\def\re{\mathrm{e}}
\def\rr{\mathrm{r}}
\def\rD{\mathrm{D}}
\def\Supp{\mathrm{Supp}}
\def\Vir{\mathbf{Vir}}
\def\zero{\mathbf{0}}
\def\one{\mathbf{1}}
\newtheorem{theorem}{{Theorem}}[section]
\newtheorem{lemma}[theorem]{Lemma}
\newtheorem{remark}[theorem]{Remark}
\newtheorem{proposition}[theorem]{Proposition}
\begin{document}

\begin{frontmatter}



\title{Classification of simple quasifinite modules for contact superconformal algebras with $N\ne4$}



\author[1]{Yan-an Cai\fnref{fn1}}
\ead{yatsai@suda.edu.cn}

\author[1]{Rencai L\"{u}\corref{cor1}}
\ead{rlu@suda.edu.cn}


\address[1]{Department of Mathematics, Soochow University, Suzhou 215006, Jiangsu, P. R. China}


\cortext[cor1]{Corresponding author}




\begin{abstract}
In this paper, we classify all simple jet modules for contact superconformal algebras $\cK(N;\epsilon)$ with $N\neq4$. Then all simple quasifinite modules for $\widehat{\cK}(N;\epsilon)$ ($N\neq4$), the universal central extension of $\cK(N;\epsilon)$, are classified. Our results show that Mat\'{i}nez-Zelmanov's conjecture in \cite{MZe1} holds for $\cK(N;\epsilon)$ ($N\ne4$).
\end{abstract}

\begin{keyword}
contact superconformal algebras, quasifinite modules, jet modules
\MSC[2000] 17B10, 17B20, 17B65, 17B66, 17B68
\end{keyword}

\end{frontmatter}


\section{Introduction}\label{sec1}

We denote by $\bZ, \bZ_+, \bN, \bC$ and $\bC^*$ the sets of all integers, non-negative integers, positive integers, complex numbers, and nonzero complex numbers, respectively. All vector spaces and algebras in this paper are over $\bC$. We denote by $\delta_{i,j}$ the Kronecker delta. For a statement $P$, set
 \[
 \delta_P=\begin{cases}
 0, & \mbox{ if } P \mbox{ does not hold},\\
 1, & \mbox{ if } P \mbox{ holds}.
 \end{cases}
 \]
Throughout this paper, by subalgebras, submodules for Lie superalgebras we mean subsuperalgebras and subsupermodules respectively. For $N\in\bN$, set $\zero:=(0,\ldots,0), \one:=(1,\ldots,1)\in\bZ^N$ and $\overline{1,N}=\{1,2,\ldots,N\}$. For a real number $r$, denote by $\lfloor r\rfloor$ the maximal integer smaller than or equal to $r$. Let $\boldsymbol{i}=\sqrt{-1}$. $\Pi$ denotes the parity change functor.

In view of importance of the Witt algebra $\cW(0)=\rD\re\rr\bC[t^{\pm1}]$ and the Virasoro algebra (the central extension of $\cW(0)$) in physics, Neveu, Schwarz \cite{NS}, Ramond \cite{R} and others considered superextensions of the algebra $\cW(0)$. These superextensions became known as \emph{superconformal algebras}. A superconformal algebra (see \cite{KL}) is a simple complex Lie superalgebra such that it contains the Witt algebra as a subalgebra, and has growth $1$. The $\bZ$-graded superconformal algebras are ones for which $\ra\rd\, t\frac{d}{dt}$ is diagonalizable with finite dimensional eigenspaces, see \cite{KL}. 
The main known series of superconformal algebras are $\cW(N) (N\geq0)$, all derivations of $\bC[t^{\pm1}]\otimes\Lambda(N)$, where $\Lambda(N)$ is the Grassmann algebra in $N$ odd variables, the series $\cS'(N;\alpha)(N\geq2)$ of one-parameter families of deformations of the divergence-free subalgebra of $\cW(N)$ and the series of \emph{contact superconformal algebras} $\cK'(N;\epsilon) (N\geq1)$, see \cite{Ka1, KL}. The corresponding central extensions were classified in \cite{KL}.

The simplest example is the Virasoro algebra. An important class of modules for the Virasoro algebra are the so-called quasifinite modules (or Harish-Chandra modules), the weight modules with finite dimensional weight spaces, which were classified by O. Mathieu in \cite{Ma}. A classification of simple quasifinite modules for $\cW(N)$, i.e. simple modules on which $t\frac{d}{dt}$ acts diagonally with finite dimensional weight spaces, was given in \cite{BFIK,XL1}.

In this paper, we will study simple quasifinite modules (also called Harish-Chandra modules, see \cite{MZe1}) for contact superconformal algebras, i.e. $\cK'(N;\epsilon) (\epsilon\in\bZ_+^N)$. Contact superconformal algebras have three sectors, Neveu-Schwarz sector ($\epsilon=\zero$), Ramond sector ($\epsilon=\one$) and the twisted sector ($\epsilon\in\{0,1\}^N\setminus\{\zero,\one\}$). In \cite{KL}, V. Kac and van de Leuer showed that Ramond sector is isomorphic to Neveu-Schwarz sector when $N$ is even, and is isomorphic to the twisted sector when $N$ is odd. Hence, there are essentially two different sectors.

The algebra $\cK(N;\zero)$ is also the Lie superalgebra of contact vector fields with Laurent polynomials as coefficients, which is characterized by its action on a contact $1$-form (see \cite{KL}); it is known to physicists as the $SO(N)$ superconformal algebra \cite{A,Sc}. $\cK(N;\epsilon)$ is simple except when $N=4$ and $\sum\limits_{i=1}^N\epsilon_i\in2\bZ$, in which case $\cK'(4;\epsilon)=[\cK(4;\epsilon),\cK(4;\epsilon)]$ is a simple ideal in $\cK(4;\epsilon)$ of codimension one.

Let $\widehat{\cK}(N;\epsilon)$ be the central extension of $\cK(N;\epsilon)$, which is nontrivial unless $N\le4$. The algebras $\widehat{\cK}(N;\epsilon)$ ($N\le4$) and $\widehat{\cS'}(2;\alpha)$ (central extension of $\cS'(2;\alpha)$) are also known as the super-Virasoro algebras.  Weight modules for the $N=1$ super-Viraoro algebras  have been extensively investigated (cf. \cite{DLM,IK1,IK2}), for more related results we refer the reader to \cite{CLX, CP, Rao, Ka, Ka1, KL, KS, LPX, Ma, MZe, MZ, S1} and references therein. A complete classification for $\widehat{\cK}(1;\epsilon)$ was given in \cite{S2}. Recently, with the theory of the $A$-cover introduced in \cite{BF},
a new approach to classify all simple quasifinite modules for $\widehat{K}(1;\epsilon)$ was given in \cite{CL, CLL}. The classification of simple quasifinite modules for $\widehat{\cK}(2;\one)$ was given in \cite{LPX}, \cite{XL1} and \cite{BFIK}. In this paper, we will classify all simple quasifinite modules for $\widehat{\cK}(N;\epsilon)$ with $N\neq4$.

One of the main step of the $A$-cover theory is to classify simple jet modules intorduced in \cite{B,Rao}. In \cite{XL1,CL,CLL}, the authors classify simple jet modules via computing the jet algebra. However, it would be much more complicated to compute jet algebras for $\widehat{K}(N;\epsilon)$ in general. Moreover, the methods to compute jet algebras for the Neveu-Schwarz algebra and the Ramond algebra are quite different. In this paper, with the weighting functor introduced in \cite{N}, we give a conceptional method to classify simple jet modules uniformly.

The paper is organized as follows. In Section \ref{pre}, we collect some basic results for our study. Simple jet modules are classified in Section \ref{cuspidalAK}. We classify all simple quasifinite modules for $\widehat{\cK}(N;\epsilon)$ ($N\neq4$) in Section \ref{Mainresults}. Simple quotients of ``tensor modules" for $N=2,3$ are given in Section \ref{N=2} and Section \ref{N=3}, respectively.

\

\begin{remark}
The main result of this paper (Theorem \ref{mainresult}) was reported by the first author in 18th National Conference  on Lie Theory on July 18th 2023, at Shanghai. During the visit of the second author to  Shenzhen International Center for Mathematics at Southern University of Science and Technology early March 2024, we learn from Prof. Zelmanov that there is a general result on classification of simple quasifinite modules for superconformal algebras of large rank in an unpublished manuscript joint with Mathieu and Martinez. We believe that together with methods in these two papers, we can get a classification of simple quasifinite modules for $K(4;\epsilon)$.
\end{remark}

\section{Preliminaries}
\label{pre}

Consider the associative algebra $\cA:=\bC[t^{\pm1}]\otimes\Lambda(N)$, where $\Lambda(N)$ is the Grassmann algebra in $N$ variables $\xi_1,\ldots,\xi_N$ and $|t|:=\bar{0}, |\xi_i|:=\bar{1}$. We omit $\otimes$ in $\cA$ for convenience. For any $i_1,\dots,i_k\in\overline{1,N}$, write $\xi_{i_1,\dots,i_k}:=\xi_{i_1}\cdots \xi_{i_k}$. Also, for any subset $I=\{i_1,\dots,i_k\} \subset \overline{1,N}$, write $\underline{I}=(l_1,\dots,l_k)$ if $\{l_1,\dots,l_k\}=\{i_1,\dots,i_k\}$ and $l_1<\dots<l_k$. Denote $\xi_I:=\xi_{l_1,\dots,l_k}$ and set $\xi_\varnothing=1$. Denote $\tau(i_1,\cdots,i_k)$ the inverse order of the sequence $i_1,\cdots,i_k$, and $\tau(I,J)=\tau(\underline{I},\underline{J})=\tau(k_1,\cdots,k_s,l_1,\cdots,l_r)$ when $I\cap J=\emptyset$ with $\underline{I}=(k_1,\cdots,k_s),\underline{J}=(l_1,\cdots,l_r)$. We set $\tau(\emptyset,\emptyset)=\tau(\emptyset)=0$. Denote $\xi_{I,J}=\xi_I\xi_J, \xi_{i,I}=\xi_i\xi_I,\xi_I\xi_i=\xi_{I,i}$.

Denote by $\cW(N)$ the superalgebra of all derivations of $\cA$. For $\epsilon=(\epsilon_1,\epsilon_2,\cdots,\epsilon_N)\in\bZ^N$, let $\omega_\epsilon:=dt-\sum\limits_{i=1}^Nt^{\epsilon_i}\xi_id\xi_i$, which is called a \emph{contact form} \cite{KL,Ka1,GLS}. The contact superconformal algebras  are defined as
\[
\cK(N;\epsilon)=\{D\in W(N)\,|\,D\omega_\epsilon=P\omega_\epsilon\mbox{ for some } P\in\cA\}.
\]
Note that $\cK(N;\epsilon)$ is simple except when $N=4$ and $\frac{1}{2}(\epsilon_1+\epsilon_2+\epsilon_3+\epsilon_4)\in\bZ$. Set $\cK'(N;\epsilon)=[\cK(N;\epsilon),\cK(N;\epsilon)]$. The following result was given in \cite{KL}.

\begin{theorem}
Let $N\ge1$.
\begin{enumerate}
\item There are only two non-isomorphic superalgebras $\cK(N;\epsilon)$ corresponding to $\epsilon=\zero$ and $(1,0,\ldots,0)$.
\item $\cK'(N;\epsilon)\cong \cK'(N;\epsilon')$ if and only if $\sum\limits_{i=1}^N(\epsilon_i-\epsilon'_i)\in2\bZ$.
\item $\cK'(N;\epsilon)$ has no nontrivial $2$-cocycles for $N>4$.
\end{enumerate}
\end{theorem}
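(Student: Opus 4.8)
Since this result is due to \cite{KL}, I only outline the strategy one would follow.

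All three parts rest on the concrete model of $\cK(N;\epsilon)$. To each $f\in\cA$ one attaches the unique contact vector field $D_f\in\cW(N)$ with $\omega_\epsilon(D_f)=f$ (so that $D_f\omega_\epsilon$ is automatically a multiple of $\omega_\epsilon$); this gives a linear isomorphism $\cA\xrightarrow{\ \sim\ }\cK(N;\epsilon)$, $f\mapsto D_f$, transporting the bracket to an explicit ``contact bracket'' $\{\,\cdot\,,\,\cdot\,\}_\epsilon$ on $\cA$, whose dependence on $\epsilon$ enters only through the weights $t^{-\epsilon_i}$ attached to the $\ptl_{\xi_i}f\,\ptl_{\xi_i}g$ terms (and a correspondingly weighted Euler operator). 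I would write this formula down first and use it throughout.

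For part (1), the plan is to pass between the algebras $\cK(N;\epsilon)$ via automorphisms of the \emph{associative} superalgebra $\cA$ that send $\omega_\epsilon$ to a scalar multiple of $\omega_{\epsilon'}$, since such a $\phi$ induces $\cK(N;\epsilon)\xrightarrow{\ \sim\ }\cK(N;\epsilon')$, $D\mapsto\phi D\phi^{-1}$. Three families suffice. First, the rescalings $\xi_i\mapsto t^{m_i}\xi_i$ (keeping $t$ and the other $\xi_j$ fixed): since $\xi_i^2=0$ the cross term drops out and $\omega_\epsilon\mapsto\omega_{\epsilon'}$ with $\epsilon'_i=\epsilon_i+2m_i$, so one reduces to $\epsilon\in\{0,1\}^N$. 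Second, permutations of the $\xi_i$, reducing to $\epsilon=(\underbrace{1,\dots,1}_k,0,\dots,0)$. Third --- the one genuinely clever step --- for a pair of indices $i,j$ with $\epsilon_i=\epsilon_j=1$ one performs a ``complex rotation plus rescaling'' in $\xi_i,\xi_j$: passing to $\eta_{\pm}=\xi_i\pm\boldsymbol{i}\xi_j$ and then rescaling one of $\eta_{\pm}$ by a suitable power of $t$ converts $t\xi_i d\xi_i+t\xi_j d\xi_j$ into the unweighted shape, lowering $k$ by $2$. Iterating leaves $k\in\{0,1\}$, i.e.\ $\epsilon=\zero$ or $\epsilon=(1,0,\dots,0)$, distinguished by the parity of $\sum_i\epsilon_i$. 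To see that these two are genuinely non-isomorphic one invokes a structural invariant separating the two sectors: for $N=1$ they are exactly the Neveu--Schwarz and Ramond super-Virasoro algebras, which are classically non-isomorphic, and for general $N$ one argues in the same spirit (for instance via the spectrum of a grading element on the odd part, or via the presence of a nonzero odd element whose square acts semisimply in the Ramond sector but not in the Neveu--Schwarz one).

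For part (2), first note that $\cK'(N;\epsilon)=\cK(N;\epsilon)$ outside the single case $N=4$, $\sum_i\epsilon_i$ even, where it is the codimension-one simple derived ideal; the ``if'' direction then follows from part (1) for $N\ne4$ together with a direct inspection of the four $N=4$ algebras, and ``only if'' follows from the same invariant, $\sum_i(\epsilon_i-\epsilon'_i)\in2\bZ$ being precisely the condition that the two odd-part spectra agree modulo $\bZ$. For part (3), the plan is to compute $H^2(\cK'(N;\epsilon),\bC)$ using the grading: every class is represented by a cocycle $\psi$ of degree $0$, so $\psi(x,y)=0$ unless $\deg x+\deg y=0$, and after subtracting a coboundary one may assume $\psi$ kills the degree-$0$ subalgebra $\cK'(N;\epsilon)_0$ --- which for $N>4$ is, up to a one-dimensional summand, a finite-dimensional semisimple Lie algebra containing $\mathfrak{so}(N)$, hence has no nontrivial central extensions --- and also $\psi(d,\cdot)=0$ for the grading element $d$. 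The remaining freedom is a system of $\cK'(N;\epsilon)_0$-invariant pairings between $\cK'(N;\epsilon)_n$ and $\cK'(N;\epsilon)_{-n}$, tied together across $n$ by the cocycle identity; a representation-theoretic analysis of these pairings, in which the hypothesis $N>4$ makes the graded pieces rigid enough, shows they assemble into a coboundary, whereas for $N\le4$ exactly one scalar parameter survives --- the super-Virasoro cocycle. The hardest part will be part (3): organizing the generators and relations of $\cK'(N;\epsilon)$ precisely enough to force a degree-$0$ cocycle to be a coboundary, and pinning down why the threshold is exactly $N>4$; a secondary, purely computational, difficulty is verifying that the rotation-and-rescaling substitution in step three of part (1) really produces a contact form of the claimed shape.
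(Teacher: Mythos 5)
The paper does not prove this theorem; it states it as a result of Kac and van de Leur and cites \cite{KL}. So there is no in-paper proof against which to compare your sketch, and one can only ask whether your outline would actually yield a proof.

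The step you flag at the end as a ``secondary, purely computational'' difficulty is, in fact, a genuine gap whenever $N>2$. Concretely, take $\epsilon_1=\epsilon_2=1$, put $\eta_\pm=\xi_1\pm\boldsymbol{i}\xi_2$, and define the algebra automorphism $\phi$ of $\cA$ you propose by $\phi(\eta_+)=\eta_+$, $\phi(\eta_-)=t^{-1}\eta_-$, $\phi(t)=t$, $\phi(\xi_j)=\xi_j$ for $j>2$. One computes
\[
\phi^*(\omega_\epsilon)=\bigl(1-\boldsymbol{i}\,t^{-1}\xi_1\xi_2\bigr)\,dt-\xi_1\,d\xi_1-\xi_2\,d\xi_2-\sum_{j>2}t^{\epsilon_j}\xi_j\,d\xi_j .
\]
For $N=2$ this is $\lambda\,\omega_{\epsilon'}$ with $\lambda=1-\boldsymbol{i}t^{-1}\xi_1\xi_2$ and $\epsilon'=(0,0)$, so the construction does what you want. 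But for $N>2$ the remaining $d\xi_j$ terms are \emph{not} multiplied by $\lambda$ (since $\lambda\xi_j=\xi_j-\boldsymbol{i}t^{-1}\xi_1\xi_2\xi_j\ne\xi_j$ when $j>2$), and no $\lambda\in\cA$ can simultaneously match the $dt$ and $d\xi_j$ coefficients. Hence this $\phi$ does \emph{not} satisfy $\phi^*\omega_\epsilon\in\cA\,\omega_{\epsilon'}$, and conjugation by it does not carry $\cK(N;\epsilon)$ onto $\cK(N;\epsilon')$ for $N>2$. At the very least you would have to correct $\phi$ on the remaining odd generators (e.g.\ by twisting each $\xi_j$, $j>2$, by a factor of the form $1+c(t)\xi_1\xi_2$), or abandon the ``conjugation by an $\cA$-automorphism'' framing and directly exhibit the Lie superalgebra isomorphism, which is what the paper quotes from \cite{KL} in the form $D_t\mapsto D_t+\boldsymbol{i}D_{t\xi_1\xi_2}$ (Lemma~\ref{equivcat}). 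A second, lesser, gap: you leave the invariant separating $\epsilon=\zero$ from $\epsilon=(1,0,\dots,0)$ as a suggestion, but part~(1) is a non-isomorphism claim and needs an explicit invariant (e.g.\ an argument about the eigenvalue pattern of $\ad$-semisimple elements on the odd part), not just the $N=1$ analogy.
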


Write $\partial_t:=\frac{\partial}{\partial t}, \partial_i:=\frac{\partial}{\partial\xi_i}$. There is one-to-one correspondence between the differential operators in $\cK(N;\epsilon)$ and the functions in $\cA$. The correspondence $f\leftrightarrow D_f$ is given by
\[
D_f:=(\Delta f)D^\epsilon+\sum\limits_{i=1}^ND^\epsilon(f)\xi_i\partial_i+(-1)^{|f|}\sum\limits_{i=1}^Nt^{-\epsilon_i}\partial_i(f)\partial_i,
\]
where $\Delta f:=2f-\sum\limits_{i=1}^N\xi_i\partial_i(f), D^\epsilon:=\partial_t-\frac{1}{2}t^{-1}\sum\limits_{i=1}^N\epsilon_i\xi_i\partial_i$. In particular, $\cK(N;\epsilon)$ has a $\bC$-basis $\{D_{t^k\xi_I}\,|\,k\in\bZ, I\subseteq\overline{1,N}\}$ with Lie brackets identified with the contact bracket in $\cA$:
\[
\{f,g\}:=(\Delta f)D^\epsilon(g)-D^\epsilon(f)(\Delta g)+(-1)^{|f|}\sum\limits_{i=1}^Nt^{-\epsilon_i}\partial_i(f)\partial_i(g),
\]
so that $[D_f,D_g]=D_{\{f,g\}}$. More precisely, we have
\begin{equation}\label{bracket1}
\begin{aligned}
&[D_{t^{k+1}\xi_I},D_{t^{l+1}\xi_J}]\\
=&\begin{cases}
0, & |I\cap J|>1,\\
(-1)^{|I|}D_{t^{k+l+2-\epsilon_i}\partial_i(\xi_I)\partial_i(\xi_J)}, & I\cap J=\{i\},\\
\big((2-|I|)(l+1-\frac{1}{2}\sum\limits_{j\in J}\epsilon_j)-(2-|J|)(k+1-\frac{1}{2}\sum\limits_{i\in I}\epsilon_i)\big)D_{t^{k+l+1}\xi_{I,J}}, & I\cap J=\emptyset.
\end{cases}
\end{aligned}
\end{equation}
$\cK(N;\epsilon)$ acts naturally on $\cA$, hence we have the extended algebras $\widetilde{\cK}(N;\epsilon):=\cK(N;\epsilon)\ltimes\cA$.

On the other hand, $\cK(N;\epsilon)$ is naturally a module over the abelian superalgebra $\cA$ with the actions $fD_g:=D_{fg}$. And together with with adjoint $\cK(N;\epsilon)$ actions, $\cK(N;\epsilon)$ is a $\widetilde{\cK}(N;\epsilon)$ module. To see this, we only need to verify
\[
[D_f,hD_g]-(-1)^{|h||f|}h[D_f,D_g]=D_f(h)D_g=D_{D_f(h)g},
\]
that is,
\[
\{f,hg\}-(-1)^{|h||f|}h\{f,g\}=D_f(h)g.
\]
The formula follows from the fact that $\{\,,\,\}$ is a contact bracket for $\cA$ (\cite{MZe1}, Example 12).

For the rest of this paper, we assume that $N\ne4$. Denote by $\widehat{\cK}(N;\epsilon)$ the universal central extension of $\cK(N;\epsilon)$. For $N>4, \widehat{\cK}(N;\epsilon)=\cK(N;\epsilon)$. For $N<4$, $\widehat{\cK}(N;\epsilon)$ has a $\bC$-basis $\{C,D_{t^k\xi_I}\,|\,k\in\bZ, I\subseteq\overline{1,N}\}$ with brackets (see \cite{KL})
\begin{align*}
&[D_{t^{m+1}\xi_I},D_{t^{n+1}\xi_J}]\\
=&D_{\{t^{m+1}\xi_I,t^{n+1}\xi_J\}}+\delta_{I,J}\delta_{m+n+\sum\limits_{i\in I}(1-\epsilon_i),0}\frac{C}{3}\Big(\delta_{|I|,0}m(m^2-1)\\
&+\delta_{|I|,1}((m+\frac{1}{2}\sum\limits_{i\in I}(1-\epsilon_i))^2-\frac{1}{4})+\delta_{|I|,2}(m+\frac{1}{2}\sum\limits_{i\in I}(1-\epsilon_i))+\delta_{|I|,3}\Big).
\end{align*}

An $\cA\cK(N;\epsilon)$ module is a $\widetilde{\cK}(N;\epsilon)$ module with $\cA$  acting associatively.
Let $\fg$ be any subalgebra of $\widetilde{\cK}(N;\epsilon)$ containing $D_t$ or $\widehat{\cK}(N;\epsilon)$. Denote $\sW(\fg)$ be the full category consisting of $\fg$ modules on which the action of $D_t$ is diagonalizable.  Let $M\in\sW(\fg)$. Then $M=\bigoplus\limits_{\lambda\in\bC}M_\lambda$, where $M_\lambda=\{v\in M\,|\,D_{t}v=\lambda v\}$, called the \emph{weight space} of weight $\lambda$. $\Supp(M):=\{\lambda\in\bC\,|\,M_\lambda\neq0\}$ is called the \emph{support} of $M$. A module $M\in\sW(\fg)$ is called a \emph{quasifinite module} if $\dim M_\lambda<\infty$ for all $\lambda$.  A module $M\in\sW(\fg)$ is called a \emph{bounded module} (or a cuspidal module) if the dimensions of weight spaces are bounded, that is there is $b\in\bN$ with $\dim M_\lambda<b$ for all $\lambda\in\Supp(M)$. Clearly, if $M$ is simple, then $\Supp(M)\subseteq\lambda+(1+\delta_{\epsilon,\one})\bZ$ for some $\lambda\in\bC$. Denote by $\sF(\fg)$ ($\sB(\fg)$, respectively) the full subcategory of $\sW(\fg)$ consisting of quasifinite (bounded, respectively) $\fg$ modules.

A \emph{jet $\cK(N;\epsilon)$ module} associated to $\cA$ is an  $\cA\cK(N;\epsilon)$ module in $\sF(\widetilde{\cK}(N;\epsilon))$. Denote the category consisting of all jet $\cK(N;\epsilon)$ modules associated to $\cA$ by $\sJ(\cK(N;\epsilon),\cA)$. Clearly, any simple module in $\sJ(\cK(N;\epsilon),\cA)$ is bounded. For a fixed $\lambda\in\bC$, denote by $\sJ_\lambda(K(N;\epsilon),A)$ the subcategory of $\sJ(\cK(N;\epsilon),\cA)$ supported on $\lambda+(1+\delta_{\epsilon,\one})\bZ$.

The subalgebra of $\cK(N;\epsilon)$ spanned by $\{D_{t^{k+1}}\,|\, k\in\bZ\}$ is isomorphic to the Witt algebra $\cW(0)$.  The following results for $\cW(0)$ modules will be used.

\begin{lemma}[{\cite[Lemma 2.6]{CLW}}]\label{cofinite}
Any co-finite ideal of $(t-1)\cW(0)$ contains $(t-1)^\ell \cW(0)$ for large $\ell$.
\end{lemma}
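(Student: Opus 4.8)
The plan is to transport the problem into the commutative algebra of $R:=\bC[t^{\pm1}]$. Write a general element of $\cL:=(t-1)\cW(0)$ as $f\partial_t$ with $f\in(t-1)R$, so that $[f\partial_t,g\partial_t]=(fg'-f'g)\partial_t$ (here $'=\partial_t$). Given a cofinite ideal $I$ of $\cL$ — which is automatically nonzero, since $\cL$ is infinite dimensional — set $\cF:=\{f\in(t-1)R: f\partial_t\in I\}$, a nonzero subspace of $(t-1)R$; the assumption that $I$ is an ideal amounts exactly to: $gf'-g'f\in\cF$ whenever $f\in\cF$ and $g\in(t-1)R$. It suffices to produce $\ell$ with $(t-1)^\ell R\subseteq\cF$, for then $(t-1)^\ell\cW(0)\subseteq I$.

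I would first extract two ``multiplicative'' consequences of the ideal condition. For $h\in\cF$ and $f\in R$ we have $fh\in(t-1)R$, and one computes $[fh\,\partial_t,\,h\,\partial_t]=-f'h^2\,\partial_t$, so $f'h^2\in\cF$; as $f$ runs over $R$ its derivative runs over the codimension-one subspace $R_0:=\operatorname{span}\{t^m:m\neq-1\}$, hence $R_0\,h^2\subseteq\cF$. Applying the bracket once more, for $r_1,r_2\in R_0$ one computes $[\,r_1h^2\partial_t,\,r_2h^2\partial_t\,]=(r_1r_2'-r_1'r_2)h^4\,\partial_t$, and the Wronskians $r_1r_2'-r_1'r_2$ with $r_1,r_2\in R_0$ span all of $R$ (using $t^a(t^b)'-(t^a)'t^b=(b-a)t^{a+b-1}$ and choosing $a,b\neq-1$, $a\neq b$ appropriately one reaches every monomial, including $t^{-1}$). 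Therefore $h^4R\subseteq\cF$ for every $0\neq h\in\cF$: so $\cF$ already contains a nonzero ideal of the principal ideal domain $R$, and the remaining task is to arrange this ideal to be a power of $(t-1)$.

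Next I would show that the elements of $\cF$ have no common zero at any point $a\in\bC^*\setminus\{1\}$. Suppose otherwise; let $m\geq1$ be the minimal order of vanishing at $a$ among nonzero members of $\cF$, and choose $h\in\cF$ with $h=(t-a)^mu$, $u(a)\neq0$. Then $(t-1)h'-h=[(t-1)\partial_t,\,h\partial_t]\in\cF$; it is nonzero (otherwise $h$ would be a nonzero scalar multiple of $t-1$, hence nonvanishing at $a$); and expanding, $(t-1)h'-h=m(t-1)(t-a)^{m-1}u+(t-a)^m\bigl((t-1)u'-u\bigr)$ has order of vanishing exactly $m-1$ at $a$, contradicting the minimality of $m$. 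So for each $a\in\bC^*\setminus\{1\}$ there is $\eta_a\in\cF$ with $\eta_a(a)\neq0$.

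To finish: fix any $0\neq h_0\in\cF$ and let $a_1,\dots,a_s$ be its zeros in $\bC^*\setminus\{1\}$ (possibly none). By the two previous steps, $\cF$ contains the ideal $K:=h_0^4R+\sum_{j=1}^s\eta_{a_j}^4R$ of $R$. Its generator $g$ divides $h_0^4$, so its only possible irreducible factors are $t-1$ and the $t-a_j$; since $g$ also divides $\eta_{a_j}^4$ and $t-a_j\nmid\eta_{a_j}$, each $t-a_j$ is excluded; and $g\in K\subseteq\cF\subseteq(t-1)R$ forces $t-1\mid g$. Hence $g$ is a unit times $(t-1)^\ell$ for some $\ell\geq1$, so $(t-1)^\ell R=K\subseteq\cF$ and $(t-1)^\ell\cW(0)\subseteq I$ (and a fortiori for every larger exponent). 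I note that only $I\neq0$ was used. The bracket identities and the two spanning facts are routine; the one genuinely substantive use of the ideal structure — and the step where I would expect to need care — is the no-common-zero argument, via the minimal-order trick with the vector field $(t-1)\partial_t$.
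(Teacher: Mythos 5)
Your argument is correct, and you can't be compared directly against the paper's proof because the paper doesn't give one: Lemma~\ref{cofinite} is simply imported from \cite[Lemma 2.6]{CLW}. Taken on its own merits, your proof is sound and in fact establishes a strictly stronger fact than is stated: you never use cofiniteness beyond the remark that it forces $I\ne0$, so what you actually prove is that \emph{every} nonzero ideal of $(t-1)\cW(0)$ contains $(t-1)^\ell\cW(0)$ for some $\ell\ge1$ (equivalently, every nonzero ideal of $(t-1)\cW(0)$ is automatically cofinite). The three steps all check out. The bracket identities $[fh\,\partial_t,h\,\partial_t]=-f'h^2\partial_t$ and $[r_1h^2\partial_t,r_2h^2\partial_t]=(r_1r_2'-r_1'r_2)h^4\partial_t$ are correct, the image of $\partial_t$ on $R$ is exactly $\mathrm{span}\{t^m:m\ne-1\}$, and the Wronskians $(b-a)t^{a+b-1}$ with $a,b\ne-1$, $a\ne b$ do reach every monomial, so $h^4R\subseteq\cF$. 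The no-common-zero step is the one that genuinely uses the ideal structure: writing $h=(t-a)^mu$ with $u(a)\ne0$, the evaluation $m(a-1)u(a)\ne0$ (which uses $a\ne 1$ and $m\ge1$) shows $(t-1)h'-h$ is a nonzero element of $\cF$ of order exactly $m-1$ at $a$, killing both the case $m\ge2$ (contradicts minimality) and $m=1$ (contradicts the common-zero hypothesis). The final gcd argument in the PID $\bC[t^{\pm1}]$ is clean, and the inclusion $\cF\subseteq(t-1)R$ correctly forces $\ell\ge1$; passing to all larger $\ell$ is then trivial since $(t-1)^{\ell'}\cW(0)\subseteq(t-1)^\ell\cW(0)$ for $\ell'\ge\ell$. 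This is a nice, self-contained commutative-algebra argument; it would be worth recording the strengthened statement explicitly, since in the paper's application (the kernel of the action on a finite-dimensional module in Proposition~\ref{ck1}) cofiniteness is only invoked to guarantee nonvanishing anyway.
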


We also need the following results on $\cK(N;\epsilon)$.

\begin{lemma}\label{equivcat}
If $\cK(N;\epsilon)\cong\cK(N;\epsilon')$, then the category $\sB(\cK(N;\epsilon))$ is naturally equivalent to $\sB(\cK(N;\epsilon'))$.
\end{lemma}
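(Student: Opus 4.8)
The plan is to make the isomorphism $\phi\colon\cK(N;\epsilon)\to\cK(N;\epsilon')$ do the work, but the point is that an abstract isomorphism of Lie superalgebras need not respect the $\bZ$-grading coming from $D_t$, so one cannot simply pull back modules and claim the weight-space condition is preserved. First I would recall from Theorem~1 that the only possibility is $\epsilon=\zero$ and $\epsilon'$ a permutation of $(1,0,\ldots,0)$ (or vice versa), with $N$ odd, since for $N$ even the Ramond and Neveu--Schwarz sectors only become isomorphic after passing to $\cK'$, and for the derived algebras the statement would need to be phrased for $\cK'$. So the lemma is really about one specific isomorphism, and I would either cite the explicit form of $\phi$ from \cite{KL} or reconstruct it: it sends $D_t$ to an element $h'$ of $\cK(N;\epsilon')$ which is $\mathrm{ad}$-semisimple with the same spectrum of eigenvalue-differences, i.e. $h'$ is conjugate (under an automorphism of $\cK(N;\epsilon')$, or differs by an inner shift) to $D_t$ up to rescaling. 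The key structural fact is that all such ``Virasoro-type'' elements inside a contact superconformal algebra that induce a $\bZ$- or $\tfrac12\bZ$-grading with finite-dimensional homogeneous pieces are conjugate; this is where I expect to lean on the classification in \cite{KL} or on a direct computation with the bracket \eqref{bracket1}.

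Granting that, the argument is formal. Given $M\in\sB(\cK(N;\epsilon'))$, define $\phi^*M$ to be $M$ as a vector space with $\cK(N;\epsilon)$ acting through $\phi$. I must check $\phi^*M\in\sW(\cK(N;\epsilon))$, i.e. that $D_t=\phi^{-1}(h')$ acts diagonalizably on $M$ with the $D_t$-weight spaces finite-dimensional, and in fact uniformly bounded. Write $h'=\alpha(D_{t}+\text{lower-order correction})$ where the correction lies in the sum of non-negative (or non-positive) graded pieces; since $M$ is a bounded weight module for $\cK(N;\epsilon')$, it decomposes into finite-dimensional generalized weight spaces for the genuine $D_t$ of $\cK(N;\epsilon')$, the correction term is locally nilpotent on each such piece, and a standard argument (the correction shifts degree, the support lies in a single coset of $\bZ$ or $\tfrac12\bZ$, weight spaces are bounded) shows $h'$ acts diagonalizably after all, with eigenspaces contained in finitely many old weight spaces — hence finite-dimensional, with the same bound $b$. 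Therefore $\phi^*M$ is a bounded $\cK(N;\epsilon)$ module, and $\phi^*$ is clearly a functor (it is the identity on morphisms), fully faithful, with inverse $(\phi^{-1})^*$; so it is an equivalence $\sB(\cK(N;\epsilon'))\xrightarrow{\sim}\sB(\cK(N;\epsilon))$, which is what we want.

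The main obstacle is exactly the middle step: controlling how the abstract isomorphism interacts with the weight grading, i.e. showing that $\phi(D_t)$ still acts semisimply with bounded finite-dimensional eigenspaces on any bounded module. If $\phi$ could be chosen grading-preserving (up to scaling and an inner automorphism) this is immediate; I would try to arrange that by composing $\phi$ with a suitable automorphism of $\cK(N;\epsilon')$, invoking that automorphisms of $\cK(N;\epsilon')$ form a group acting transitively enough on the relevant Cartan-type elements. If instead one only has the raw $\phi$ from \cite{KL}, then the ``locally nilpotent correction shifts degree'' argument above must be carried out carefully, using that $\Supp(M)$ sits in a single coset of $(1+\delta_{\epsilon',\one})\bZ$ and that boundedness forces the generalized weight spaces to stabilize — this is routine but is the only place any real estimate is needed. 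Everything else (functoriality, faithfulness, the inverse functor) is bookkeeping.
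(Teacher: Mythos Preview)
Your reduction in the first paragraph misreads the hypothesis. The lemma assumes $\cK(N;\epsilon)\cong\cK(N;\epsilon')$, so by Theorem~2.1 the two parameters lie in the \emph{same} isomorphism class; the interesting case is not $\epsilon=\zero$ versus $\epsilon'=(1,0,\dots,0)$ (those give non-isomorphic algebras) but rather, after reducing modulo $(2\bZ)^N$, the case $\epsilon'-\epsilon=(1,1,0,\dots,0)$ with $\epsilon,\epsilon'\in\{0,1\}^N$. That is exactly the reduction the paper performs.

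More seriously, your analysis of $\phi(D_t)$ is based on the wrong structural picture, and this is where the argument actually fails. For the explicit isomorphism $\sigma$ from \cite{KL} one has $\sigma(D_t)=D_t+\boldsymbol{i}\,D_{t\xi_1\xi_2}$ in $\cK(N;\epsilon')$. Since $\epsilon'_1=\epsilon'_2=1$, the bracket formula \eqref{bracket1} gives $[D_t,D_{t\xi_1\xi_2}]=0$: the correction term sits in degree~$0$, commutes with $D_t$, and acts $\mathrm{ad}$-semisimply. It is neither ``lower-order'' nor locally nilpotent, so your degree-shifting argument does not apply. What one must show instead is that on a bounded weight module $V$ the semisimple operator $D_{t\xi_1\xi_2}$ has only \emph{finitely many} eigenvalues; then the eigenspaces of $D_t+\boldsymbol{i}\,D_{t\xi_1\xi_2}$ are finite sums of joint $(D_t,D_{t\xi_1\xi_2})$-eigenspaces and the boundedness transfers. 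This finiteness is not formal: the paper obtains it by viewing $V$ as a bounded module over the twisted Heisenberg--Virasoro algebra $\sum_{k\in\bZ}(\bC D_{t^k}+\bC D_{t^k\xi_1\xi_2})$ and invoking the classification in \cite{LuZ}. That representation-theoretic input is the missing idea in your proposal, and without it (or an equivalent argument bounding the spectrum of $D_{t\xi_1\xi_2}$) the proof does not go through.
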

\begin{proof}
If $\epsilon-\epsilon'=(2k_1,\cdots,2k_N)\in(2\bZ)^N$, then there exists an isomorphism from $\cK(N;\epsilon)$ to $\cK(N;\epsilon')$ mapping $D_{t^k\xi_I}$ to $D_{t^{k+\sum_{i\in I}k_i}\xi_I}$. The isomorphism preserves $D_t$ and therefore $\sB(\cK(N;\epsilon))$ is naturally equivalent to $\sB(\cK(N;\epsilon'))$. Now we assume $\epsilon,\epsilon'\in\{0,1\}^N$. Without loss of generality, we may assume that $\epsilon'-\epsilon=(1,1,0,\cdots,0)$. Recall from \cite{KL}, there is an isomorphism $\sigma$ from $\cK(N;\epsilon)$ to $\cK(N;\epsilon')$ mapping $D_t$ to $D_t+\boldsymbol{i}D_{t\xi_1\xi_2}$. Any $\cK(N;\epsilon')$ module $M$ can be viewed as a $\cK(N;\epsilon)$ module via $x\cdot v=:\sigma(x)v$. Hence, we have a functor $\cT_\sigma$ from $\cK(N;\epsilon')$-$\mathrm{Mod}$ to $\cK(N;\epsilon)$-$\mathrm{Mod}$. Similarly, we have $\cT_{\sigma^{-1}}$ and $\cT_{\sigma}\cT_{\sigma^{-1}}=\cT_{\sigma^{-1}}\cT_{\sigma}\xrightarrow{\sim}\mathrm{Id}$. We claim that $\cT_\sigma$ provides an equivalence from $\sB(\cK(N;\epsilon))$ to $\sB(\cK(N;\epsilon'))$.

Note that any $V\in\sB(\cK(N;\epsilon'))$ is a bounded $\sum\limits_{k\in\bZ}(\bC D_{t^k}+\bC D_{t^k\xi_1\xi_2})$ module and via the adjoint action, $D_{t\xi_1\xi_2}$ acts diagonally on $\cK(N;\epsilon')$. Since $\sum\limits_{k\in\bZ}(\bC D_{t^k}+\bC D_{t^k\xi_1\xi_2})$ is isomorphic to the twisted Heisenberg-Virasoro algebra, following from \cite{LuZ}, $V=\bigoplus_{\lambda\in\bZ,\mu\in S}V_{\lambda,\mu}$ for some finite subset $S$ of $\bZ$, here $V_{\lambda,\mu}=\{v\in V\,|\,D_t v=\lambda v, D_{t\xi_1\xi_2}v=\mu v\}$. So, $\cT_\sigma(V)_\gamma=\sum\limits_{\mu\in S}V_{\gamma-\mu,\mu}$, is bounded, that is $\cT_\sigma(V)\in\sB(\cK(N;\epsilon))$.
\end{proof}

\begin{lemma}\label{omegaoper}
Assume that $N\ne 4$. Any module in $\sB(\cK(N;\epsilon))$ is annihilated by \[
\Omega_{m,k,p,I}:=\sum\limits_{i=0}^m(-1)^i\binom{m}{i}D_{t^{k-i}\xi_I}D_{t^{p+i}}\]
for some $m\in\bN$ and for all $k,p\in\bZ, I\subseteq\overline{1,N}$.
\end{lemma}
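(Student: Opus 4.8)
The plan is to deduce the lemma from the following polynomiality statement: for $M\in\sB(\cK(N;\epsilon))$ with $\dim M_\mu\le b$ for all $\mu$, for any weight vector $v\in M$ and any $k,p\in\bZ$, $I\subseteq\overline{1,N}$, the function $i\mapsto D_{t^{k-i}\xi_I}D_{t^{p+i}}v$ is a polynomial in $i\in\bZ$ of degree $\le d$, where $d$ can be taken to depend only on $b$. This suffices: by \eqref{bracket1} the $D_t$-weight of $D_{t^{k-i}\xi_I}$ decreases by the same amount as that of $D_{t^{p+i}}$ increases as $i$ grows, so all these vectors lie in one weight space (of dimension $\le b$); and since $\Omega_{d+1,k,p,I}v$ equals $(-1)^{d+1}$ times the $(d+1)$-st forward difference in $i$ of $D_{t^{k-i}\xi_I}D_{t^{p+i}}v$, polynomiality of degree $\le d$ forces $\Omega_{d+1,k,p,I}v=0$ for every weight vector $v$, hence $\Omega_{d+1,k,p,I}M=0$. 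As $d$ depends only on $b$, a single $m:=d+1$ then works for all $k,p\in\bZ$ and all $I$.

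The engine behind polynomiality is that the forward difference $\Delta$ in $i$ ``extracts a factor $t-1$'' from one of the two factors: from $D_{t^{a-1}\xi_I}-D_{t^a\xi_I}=-D_{t^{a-1}(t-1)\xi_I}$ and the analogous identity for $D_{t^b}$, telescoping gives
\[
\Delta\big(D_{t^{k-i}\xi_I}D_{t^{p+i}}\big)=-\,D_{t^{k-i-1}(t-1)\xi_I}\,D_{t^{p+i+1}}+D_{t^{k-i}\xi_I}\,D_{t^{p+i}(t-1)},
\]
so that $\Delta^m\big(D_{t^{k-i}\xi_I}D_{t^{p+i}}\big)$ is a $\bZ$-linear combination of products $D_fD_g$ with $f\in(t-1)^{m_1}\bC[t^{\pm1}]\xi_I$, $g\in(t-1)^{m_2}\bC[t^{\pm1}]$ and $m_1+m_2=m$. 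I would then restrict the $\cK(N;\epsilon)$-action to the Witt subalgebra $\cW(0)=\bigoplus_n\bC D_{t^{n+1}}$ and to its subalgebra $(t-1)\cW(0)=\mathrm{span}\{D_{t^n(t-1)}:n\in\bZ\}$, and use the bound $\dim M_\mu\le b$ to manufacture out of $M$ a \emph{finite-dimensional} module for $(t-1)\cW(0)$ in which applying $\Delta$ to $D_{t^{k-i}\xi_I}D_{t^{p+i}}v$ corresponds to hitting $v$ with an element of $(t-1)\cW(0)$, so that iterating $\Delta$ $m$ times lands in the $m$-th term of the corresponding filtration. The annihilator of this module is a cofinite ideal of $(t-1)\cW(0)$, so by Lemma \ref{cofinite} it contains $(t-1)^{\ell}\cW(0)$ for some $\ell$ bounded in terms of $b$; translating back, once $m$ exceeds a bound $d=d(b)$ every term $D_fD_g$ occurring in $\Delta^m\big(D_{t^{k-i}\xi_I}D_{t^{p+i}}\big)$ kills $v$, which is exactly the desired polynomiality. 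The superalgebra case needs no change: $D_{t^{p+i}}$ is even, so no signs intervene in the telescoping or in the expansion of $\Delta^m$, and $I$ ranges over a finite set.

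The hard part is the middle step — producing the right finite-dimensional $(t-1)\cW(0)$-module and verifying that Lemma \ref{cofinite} genuinely applies. The difficulty is structural: $(t-1)\cW(0)$ is not graded by the $D_t$-weight, since each generator $D_{t^{n+1}}-D_{t^n}$ mixes two weights, so it does not act on an individual weight space, and one must instead work on a carefully chosen finite-dimensional carrier — for instance the span of the restricted operators $\{D_{t^a\xi_I},\,D_{t^b}\}$ inside $\bigoplus_\mu\mathrm{Hom}(M_\lambda,M_\mu)$ cut down to finitely many weights $\mu$, or $M$ modulo a deep submodule of the filtration attached to $(t-1)\cW(0)$ — arranged so that ``peeling off $\ell$ factors of $t-1$'' really does correspond to the action of $(t-1)^{\ell}\cW(0)$, and so that the conclusion of Lemma \ref{cofinite} forces the iterated differences to \emph{vanish} rather than merely to stabilize. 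Once that is set up, everything else is routine bookkeeping with the contact bracket \eqref{bracket1}.
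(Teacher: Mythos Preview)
Your strategy has a genuine gap at the point you yourself flag as the ``hard part,'' and the sketch you offer for it does not work. The crucial claim is that, once $m$ is large enough, \emph{each individual term} $D_fD_g$ with $f\in(t-1)^{m_1}\bC[t^{\pm1}]\xi_I$, $g\in(t-1)^{m_2}\bC[t^{\pm1}]$, $m_1+m_2=m$, annihilates $v$. This is false. Take $m_1=m$, $m_2=0$: your claim would give $D_{(t-1)^m t^a\xi_I}\,M=0$, i.e.\ a single nonzero element of $\cK(N;\epsilon)$ kills $M$. Since $\cK(N;\epsilon)$ is simple for $N\ne 4$, the annihilator ideal would then be all of $\cK(N;\epsilon)$, forcing $M$ to be a trivial module --- but the tensor modules $\cF_a(V,c)$ are nontrivial bounded modules. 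What must vanish is only the full alternating sum $\Omega_{m,k,p,I}$; the cancellation among terms is the whole content, not a by-product. Relatedly, the promised finite-dimensional $(t-1)\cW(0)$-module is never constructed, and applying $\Delta$ does not correspond to the action of any fixed element of $(t-1)\cW(0)$ (it simultaneously shifts the exponents in \emph{both} factors), so Lemma~\ref{cofinite} has nothing to bite on as stated.

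The paper's proof is completely different in shape: it never attempts a unified polynomiality argument. Instead it splits on $|I|$ and, for each case, restricts $M$ to a small subalgebra of $\cK(N;\epsilon)$ where the needed $\Omega$-annihilation is already a theorem in the literature --- \cite{BF} for $|I|=0$, \cite{CLW} for $|I|\ge 2$ with $|I|\ne 4$, and \cite{CLL,CL} for $|I|=1$ (via the subalgebra $\cW(0)+\sum_k\bC D_{t^k\xi_j}\cong\cK(1;\epsilon_j)$). The remaining case $|I|=4$, which only occurs for $N\ge 5$, is handled by an explicit commutator trick: choosing $\ell\notin I$ and taking a second-difference combination of $[\Omega_{m,k,p,\{\ell\}},D_{t^j\xi_{I,\ell}}]$ collapses to $-\Omega_{m+2,\,k+j-\epsilon_\ell,\,p,\,I}$ on $M$. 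The fact that \cite{CLW} genuinely excludes the parameter corresponding to $|I|=4$ is a warning sign that a one-size-fits-all argument of the kind you sketch would have to overcome a real structural obstruction, not just bookkeeping.
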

\begin{proof}
Let $V\in\sB(\cK(N;\epsilon))$. For $I=\emptyset$, regard $V$ as a bounded $\cW(0)$ module, then Corollary 3.4 in \cite{BF} tells us that $V$ is annihilated by $\Omega_{m,k,p,\emptyset}$ for some $m$ and for all $k,p$.

For $|I|>1$ with $|I|\neq4$, regard $V$ as a bounded $\cW(0)+\sum\limits_{k\in\bZ}\bC D_{t^k\xi_I}$ module, then Lemma 4.4 in \cite{CLW} tells us that $V$ is annihilated by $\Omega_{m,k,p,I}$ for some $m$ and for all $k,p$.

For $|I|=1$, the subalgebra $\cW(0)+\sum\limits_{k\in\bZ}\bC D_{t^k\xi_j}$ is isomorphic to $K(1;\epsilon_j)$. Then from Lemma 3.7 in \cite{CLL} and Lemma 4.3 in \cite{CL}, we know that $V$ is annihilated by $\Omega_{m,k,p,\{j\}}$ for some $m$ and for all $k,p$.

To finish the proof, for $N\ge5$ and $|I|=4$, take $\ell\notin I$. Without loss of generality, we may assume $\tau(I,\{\ell\})=0$. On $V$ we have
{\footnotesize\begin{align*}
&0=[\Omega_{m,k,p,\{\ell\}},D_{t^j\xi_{I,\ell}}]-2[\Omega_{m,k,p-1,\{\ell\}},D_{t^{j+1}\xi_{I,\ell}}]+[\Omega_{m,k,p-2,\{\ell\}},D_{t^{j+2}\xi_{I,\ell}}]\\
&=\sum\limits_{i=0}^m(-1)^i\binom{m}{i}\big(-D_{t^{k+j-i-\epsilon_\ell}\xi_I}D_{t^{p+i}}+(2j-\sum\limits_{a\in I\cup\{\ell\}}\epsilon_{a}+3p+3i)D_{t^{k-i}\xi_\ell}D_{t^{j+p+i-1}\xi_{I,\ell}}\big)\\
&-2\sum\limits_{i=0}^m(-1)^i\binom{m}{i}\big(-D_{t^{k+j+1-i-\epsilon_\ell}\xi_I}D_{t^{p-1+i}}+(2j-\hskip-0.3cm\sum\limits_{a\in I\cup\{\ell\}}\hskip-0.3cm\epsilon_{a}+3p+3i-1)D_{t^{k-i}\xi_\ell}D_{t^{j+p+i-1}\xi_{I,\ell}}\big)\\
&+\sum\limits_{i=0}^m(-1)^i\binom{m}{i}\big(-D_{t^{k+j+2-i-\epsilon_\ell}\xi_I}D_{t^{p-2+i}}+(2j-\hskip-0.3cm\sum\limits_{a\in I\cup\{\ell\}}\hskip-0.3cm\epsilon_{a}+3p+3i-2)D_{t^{k-i}\xi_\ell}D_{t^{j+p+i-1}\xi_{I,\ell}}\big)\\
&=-\sum\limits_{i=0}^{m+2}(-1)^i\binom{m+2}{i}D_{t^{k+j-i-\epsilon_\ell}\xi_I}D_{t^{p+i}}.\qedhere
\end{align*}}
\end{proof}

\begin{lemma}\label{algisom}
The map $\Phi: \widetilde{\cK}(N;\epsilon)\to\widetilde{\cK}(N;\epsilon)$ defined by mapping $D_{t^{k+1}\xi_I}$ to $D_{t^{k+1}\xi_I}+(2k+\sum\limits_{i\in I}(1-\epsilon_i))t^k\xi_I$ and mapping $t^k\xi_I$ to $t^k\xi_I$ for all $k\in\bZ, I\subseteq\overline{1,N}$ is an automorphism.
\end{lemma}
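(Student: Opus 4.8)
The plan is to exploit the semidirect-product structure $\widetilde{\cK}(N;\epsilon)=\cK(N;\epsilon)\ltimes\cA$. Write $\Phi(D_f)=D_f+\phi(f)$ for $D_f\in\cK(N;\epsilon)$ and $\Phi(h)=h$ for $h\in\cA$, where $\phi\colon\cK(N;\epsilon)\to\cA$ is the even linear map with $\phi(t^{k+1}\xi_I)=\big(2k+\sum_{i\in I}(1-\epsilon_i)\big)t^k\xi_I$. Since $\cA$ is an abelian ideal of $\widetilde{\cK}(N;\epsilon)$ and $\Phi$ induces the identity on $\widetilde{\cK}(N;\epsilon)/\cA\cong\cK(N;\epsilon)$, the map $\Phi$ is automatically a bijective linear map, with inverse $D_f\mapsto D_f-\phi(f)$ on $\cK(N;\epsilon)$ and the identity on $\cA$. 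So it suffices to show that $\Phi$ is a homomorphism of Lie superalgebras. This is clear on $\cA\times\cA$; and for $D_f\in\cK(N;\epsilon)$, $h\in\cA$ one has $[\Phi(D_f),\Phi(h)]=[D_f+\phi(f),h]=[D_f,h]=D_f(h)=\Phi([D_f,h])$, since $\phi(f),h\in\cA$ and $\cA$ is abelian. Expanding $[\Phi(D_f),\Phi(D_g)]$ for $D_f,D_g\in\cK(N;\epsilon)$ and using $[\phi(f),\phi(g)]=0$, the only remaining point is the cocycle identity
\[
\phi(\{f,g\})=D_f(\phi(g))-(-1)^{|f||g|}D_g(\phi(f))
\]
for all basis elements $f=t^{m+1}\xi_I$, $g=t^{n+1}\xi_J$ (here one uses $\Phi([D_f,D_g])=D_{\{f,g\}}+\phi(\{f,g\})$).

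Next I would record the action of $\cK(N;\epsilon)$ on $\cA$ explicitly. Expanding the defining formula for $D_f$, one gets
\[
D_{t^{m+1}\xi_I}=(2-|I|)t^{m+1}\xi_I D^\epsilon+\Big(m+1-\tfrac12\sum_{i\in I}\epsilon_i\Big)t^m\xi_I\sum_{j=1}^N\xi_j\partial_j+(-1)^{|I|}\sum_{i=1}^Nt^{m+1-\epsilon_i}\partial_i(\xi_I)\partial_i,
\]
and applying this to $t^n\xi_J$ yields: $0$ if $|I\cap J|\ge 2$; $(-1)^{|I|}t^{m+n+1-\epsilon_i}\partial_i(\xi_I)\partial_i(\xi_J)$ if $I\cap J=\{i\}$; and $\big((2-|I|)(n-\tfrac12\sum_{j\in J}\epsilon_j)+|J|(m+1-\tfrac12\sum_{i\in I}\epsilon_i)\big)t^{m+n}\xi_{I,J}$ if $I\cap J=\emptyset$. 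These three cases match the three cases of the bracket \eqref{bracket1} that produces $\{f,g\}$. The conceptual reason the identity goes through is that the scalar $2k+\sum_{i\in I}(1-\epsilon_i)$ attached by $\phi$ to $D_{t^{k+1}\xi_I}$ is exactly the eigenvalue of $\mathrm{ad}\,D_t$ on $D_{t^{k+1}\xi_I}$ (and of $D_t$ on the function $t^k\xi_I$), and this ``weight'' is additive on the monomials produced in \eqref{bracket1}: it equals $\big(2m+\sum_{i\in I}(1-\epsilon_i)\big)+\big(2n+\sum_{j\in J}(1-\epsilon_j)\big)$ both on $t^{m+n+1}\xi_{I,J}$ (when $I\cap J=\emptyset$) and on $t^{m+n+2-\epsilon_i}\partial_i(\xi_I)\partial_i(\xi_J)$ (when $I\cap J=\{i\}$).

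Finally I would verify the cocycle identity case by case. If $|I\cap J|\ge 2$ both sides vanish. If $I\cap J=\{i\}$, substituting the formulas above and using $\partial_i(\xi_J)\partial_i(\xi_I)=(-1)^{(|I|-1)(|J|-1)}\partial_i(\xi_I)\partial_i(\xi_J)$ together with $\xi_{J,I}=(-1)^{|I||J|}\xi_{I,J}$, both sides collapse to $(-1)^{|I|}\big((2m+\sum_{i\in I}(1-\epsilon_i))+(2n+\sum_{j\in J}(1-\epsilon_j))\big)\,t^{m+n+1-\epsilon_i}\partial_i(\xi_I)\partial_i(\xi_J)$. If $I\cap J=\emptyset$, put $A=2-|I|$, $B=2-|J|$, $P=m+1-\tfrac12\sum_{i\in I}\epsilon_i$, $Q=n+1-\tfrac12\sum_{j\in J}\epsilon_j$, so that the weights become $2P-A$ and $2Q-B$; then the left side is $(AQ-BP)(2P+2Q-A-B)\,t^{m+n}\xi_{I,J}$ and the right side is $\big((2Q-B)(A(Q-1)+(2-B)P)-(2P-A)(B(P-1)+(2-A)Q)\big)t^{m+n}\xi_{I,J}$, and a short expansion shows the bracketed factor equals $(2P+2Q-A-B)(AQ-BP)$. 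This completes the proof.

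The step I expect to be the main obstacle is the case $I\cap J=\{i\}$: one has to track carefully the sign $(-1)^{|I|}$ coming from $D_f$, the Koszul signs hidden inside $\partial_i(\xi_I)\partial_i(\xi_J)$ and in commuting $\xi_I$ past $\xi_J$, and check that the two scalars $2m+\sum_{i\in I}(1-\epsilon_i)$ and $2n+\sum_{j\in J}(1-\epsilon_j)$ produced by $D_f(\phi(g))$ and $D_g(\phi(f))$ add up to the weight of the monomial appearing in $\{f,g\}$. Everything else is a routine, if somewhat lengthy, expansion.
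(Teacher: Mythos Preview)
Your proof is correct and is precisely the ``direct computation'' the paper alludes to (the paper's own proof is the single sentence ``Lemma follows from direct computations''). Your reduction to the $1$-cocycle identity $\phi(\{f,g\})=D_f(\phi(g))-(-1)^{|f||g|}D_g(\phi(f))$ is the natural way to organize the check, and your observation that $2k+\sum_{i\in I}(1-\epsilon_i)$ is the $\mathrm{ad}\,D_t$-eigenvalue (equivalently, the $D_t$-eigenvalue on $t^k\xi_I$) explains conceptually why the identity holds: the weight is additive along the bracket, so both sides carry the same scalar in each of the three cases of \eqref{bracket1}; the remaining sign bookkeeping in the $I\cap J=\{i\}$ case and the polynomial identity in the $I\cap J=\emptyset$ case are exactly as you wrote.
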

\begin{proof}
Lemma follows from direct computations.
\end{proof}

\begin{lemma}\label{ideal}
For $\ell\in\bZ_+$, let $\cK_\ell:=\sum\limits_{I\subseteq\overline{1,N}}(t-1)^{\max\{\ell+1-\lfloor\frac{|I|+\delta_{\ell,0}}{3}\rfloor,0\}}\xi_I\cK(N;\epsilon)$.
\begin{enumerate}
\item $\cK_0$ is a subalgebra of $\cK(N;\epsilon)$.
\item $\cK_\ell$ is an ideal of $\cK_0$.
\item $[\cK_1,\cK_\ell]\subseteq\cK_{\ell+1}$.
\item The ideal of $\cK_0$ generated by $(t-1)^{\ell+1} \cW(0)$ contains $\cK_{\ell+1}$.
\item $\mathfrak{i}:=(t-1)^2\cK(N;\epsilon)+\sum\limits_{1\le i\le N}(t-1)\xi_i\cK(N;\epsilon)+\sum\limits_{1\le i<j\le N}(t-1)\xi_i\xi_j\cK(N;\epsilon)+\sum\limits_{I\subseteq\overline{1,N},|I|\ge3}\xi_I\cK(N;\epsilon)$ is an ideal of $\cK_0$ containing $\cK_1$.
\end{enumerate}
\end{lemma}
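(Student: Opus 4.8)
The plan is to prove all five assertions by direct computation with the bracket formula \eqref{bracket1}, but organized so that the only real work is a careful bookkeeping of the exponent of $(t-1)$ under each of the three types of bracket. Throughout, set $f(I):=\max\{\ell+1-\lfloor\frac{|I|+\delta_{\ell,0}}{3}\rfloor,0\}$ for the defining exponent of $\cK_\ell$, and write a typical spanning element of $\cK_\ell$ as $(t-1)^a\xi_I D_{t^m\xi_J}$ with $a\ge f(I\setminus J)$ after expanding $\xi_I\cK(N;\epsilon)$ in the basis $\{D_{t^k\xi_K}\}$ (here one uses $\xi_I D_{t^m\xi_J}=\pm D_{t^m\xi_I\xi_J}$, which vanishes unless $I\cap J=\emptyset$). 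The key elementary fact I would isolate first is that $[D_{t^{k+1}\xi_I},D_{t^{l+1}\xi_J}]$, in every case of \eqref{bracket1}, is a scalar multiple of $D_{t^{k+l+c}\xi_K}$ with $c\in\{0,1,2\}$ depending only on $|I\cap J|$, together with the binomial-type observation that multiplying by $(t-1)$ shifts $t$-degrees by at most one unit modulo lower-order $(t-1)$-powers, so that $(t-1)^a\,(t-1)^b\,[\,\cdot\,,\,\cdot\,]$ lands in $(t-1)^{a+b}\xi_K\cK(N;\epsilon)$.

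For part (1), I would check that $\cK_0=\sum_I (t-1)^{\max\{1-\lfloor(|I|+1)/3\rfloor,0\}}\xi_I\cK(N;\epsilon)$ is closed: the exponents are $1$ for $|I|\in\{0,1\}$ and $0$ for $|I|\ge2$, and one verifies, case by case on $|I\cap J|$, that the exponent attached to the output index $K$ (which has $|K|=|I|+|J|$ when $I\cap J=\emptyset$, $|K|=|I|+|J|-2$ when $|I\cap J|=1$) is at least the sum of the two input exponents — the subtle case being when both inputs have exponent $1$ and $I\cap J=\{i\}$, giving $|K|=|I|+|J|-2$, but then the output already carries a $t^{k+l+2-\epsilon_i}$ whose evaluation at $t=1$ still leaves a genuine factor because $k+l+1$ was the degree and the $-\epsilon_i$ shift only improves things; I would phrase this uniformly via the automorphism $\Phi$ of Lemma \ref{algisom}, which trades a power of $t$ for a power that is visibly compatible. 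For part (2), the same bookkeeping shows $[\cK_0,\cK_\ell]\subseteq\cK_\ell$: an input from $\cK_0$ contributes exponent $\ge 0$ hence cannot decrease the $\cK_\ell$-exponent, and one must only confirm the exponent of the output is $\ge f(K)$; since $f$ is nonincreasing in $|I|$ in a controlled way ($f(I)$ drops by at most $1$ when $|I|$ increases by $3$) the inequality $f(I\setminus J)+0\ge f(K)$ follows from $|K|\ge |I\setminus J|$ in the disjoint case and from checking the three small cases when $|I\cap J|=1$. Part (3) is the quantitative refinement of (2): now both inputs lie in $\cK_1$ resp. $\cK_\ell$, so the output exponent is $\ge f_1(I_1\setminus J_1)+f_\ell(I_2\setminus J_2)$ up to the degree shift, and one checks this is $\ge f_{\ell+1}(K)$; the arithmetic $\lfloor\frac{a}{3}\rfloor+\lfloor\frac{b}{3}\rfloor\le\lfloor\frac{a+b}{3}\rfloor$ is exactly what makes $f_1+f_\ell\ge f_{\ell+1}$ on the combined index.

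For part (4), I would argue by downward generation: the ideal $J$ generated by $(t-1)^{\ell+1}\cW(0)=(t-1)^{\ell+1}\sum_k\bC D_{t^k}$ contains, by bracketing with $D_{t^{m+1}\xi_i}\in\cK_0$ repeatedly (each bracket with an $|I|=1$ element produces, via the $I\cap J=\emptyset$ case, an element $(t-1)^{\ge\ell+1}\xi_i D_{t^\bullet}$ possibly with one fewer power of $(t-1)$ only when the scalar forces a $t$-shift — but we are allowed to shrink exponents toward the $\cK_{\ell+1}$ bound since $f_{\ell+1}(\{i\})=\ell+1-\lfloor 1/3\rfloor=\ell+1$ already), then with $|I|=2$ and $|I|=3$ elements, exactly the elements $(t-1)^{f_{\ell+1}(K)}\xi_K D_{t^\bullet}$ for all $K$; the drop from $\ell+1$ to $\ell+1-\lfloor|K|/3\rfloor$ is produced precisely by the $|I\cap J|=1$ brackets, which lower the $\xi$-degree by $2$ and are the mechanism by which $(t-1)$-powers get consumed three-at-a-time against $\xi$-degree. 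I expect this step — tracking that the ideal generated by the Witt piece reaches \emph{all} of $\cK_{\ell+1}$ and not a proper subideal — to be the main obstacle, since it requires producing every $\xi_I$ and doing so without losing too many $(t-1)$ factors; Lemma \ref{cofinite} is the right tool on the Witt side but one must bootstrap it through the odd variables. Finally part (5) is a special case: $\mathfrak{i}$ is defined with exponents $2,1,1,0,0,\dots$ which is exactly the rule $\max\{2-\lfloor|I|/3\rfloor,0\}$ with a further lowering to $0$ for $|I|\ge3$ that still respects the subalgebra/ideal conditions (the only new check is $[\cK_0,\mathfrak{i}]\subseteq\mathfrak{i}$ and $\cK_1\subseteq\mathfrak{i}$, both of which follow from $f_1(I)\ge$ the exponent defining $\mathfrak{i}$ on every $I$, i.e. $1\ge$ resp. $0$ for $|I|\ge1$ and $2\ge 2$ for $|I|=0$), so I would simply remark that (5) follows from the computations in (1)–(2) applied to this slightly coarser exponent function, pointing out the one inequality $\max\{2-\lfloor|K|/3\rfloor,0\}\le$ (sum of input exponents) in each bracket case.
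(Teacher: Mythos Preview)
Your overall strategy — track the power of $(t-1)$ through the three cases of the bracket — is exactly what the paper does, but your execution contains several concrete errors that would derail the argument.

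The paper's proof consists of a single explicit computation: using that $D_f$ is a derivation and that $\cK(N;\epsilon)$ is an $\cA$-module via $fD_g=D_{fg}$, one computes directly
\[
[(t-1)^{\ell_1}D_{t^i\xi_I},(t-1)^{\ell_2}D_{t^j\xi_J}]=
\begin{cases}
0,& |I\cap J|>1,\\
(-1)^{|I|}(t-1)^{\ell_1+\ell_2}D_{t^{i+j-\epsilon_i}\partial_i\xi_I\partial_i\xi_J},& I\cap J=\{i\},\\
(\cdots)(t-1)^{\ell_1+\ell_2}D_{t^{i+j-1}\xi_{I,J}}+\big(\ell_2(2-|I|)-\ell_1(2-|J|)\big)(t-1)^{\ell_1+\ell_2-1}D_{t^{i+j}\xi_{I,J}},& I\cap J=\emptyset.
\end{cases}
\]
All five parts then follow by inspecting this one formula. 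You never write this formula down, and your attempt to reason around it leads you astray in at least three places. First, your claim that ``$(t-1)^a(t-1)^b[\cdot,\cdot]$ lands in $(t-1)^{a+b}\xi_K\cK(N;\epsilon)$'' is simply false: the disjoint case produces a genuine $(t-1)^{a+b-1}$ term whenever $\ell_2(2-|I|)\ne\ell_1(2-|J|)$, and this drop is the whole content of the lemma. Second, you misidentify the ``subtle case'' in part (1): when $I\cap J=\{i\}$ there is \emph{no} drop in the $(t-1)$-power at all, so your discussion of ``evaluation at $t=1$'' and the automorphism $\Phi$ is addressing a nonexistent difficulty. Third, and most seriously for part (4), you say the drop is ``produced precisely by the $|I\cap J|=1$ brackets''; it is the opposite --- the drop occurs only in the disjoint case, and the mechanism for generating $\cK_{\ell+1}$ from $(t-1)^{\ell+1}\cW(0)$ is bracketing with $D_{t^m\xi_I}\in\cK_0$ for $|I|\ge 3$, where the drop coefficient $-(\ell+1)(2-|I|)=(\ell+1)(|I|-2)$ is nonzero and allows one (after varying $i,m$ and subtracting) to isolate both $(t-1)^{\ell+1}D_{t^p\xi_I}$ and $(t-1)^{\ell}D_{t^p\xi_I}$ in the ideal.

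The fix is to abandon the indirect ``expand in $D_{t^k\xi_I}$ and recombine'' approach and instead compute the bracket above once, in closed form, as the paper does; every inequality you need between the exponent functions $f_\ell$ then becomes a two-line check against the explicit coefficients $\ell_1+\ell_2$ and $\ell_1+\ell_2-1$.
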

\begin{proof}
From (\ref{bracket1}), we have: for $\ell_1,\ell_2\in\bZ_+$,
\begin{align*}
&[(t-1)^{\ell_1}D_{t^{i}\xi_I},(t-1)^{\ell_2}D_{t^{j}\xi_J}]\\
=&\begin{cases}
0, & |I\cap J|>1,\\
(-1)^{|I|}(t-1)^{\ell_1+\ell_2}D_{t^{i+j-\epsilon_i}\partial_i(\xi_I)\partial_i(\xi_J)}, & I\cap J=\{i\},\\
\big((2-|I|)(j-\frac{1}{2}\sum\limits_{s\in J}\epsilon_s)-(2-|J|)(i-\frac{1}{2}\sum\limits_{s\in I}\epsilon_s)\big)(t-1)^{\ell_1+\ell_2}D_{t^{i+j-1}\xi_{I,J}}&\\
+\big(\ell_2(2-|I|)-\ell_1(2-|J|)\big)(t-1)^{\ell_1+\ell_2-1}D_{t^{i+j}\xi_{I,J}}, & I\cap J=\emptyset.
\end{cases}
\end{align*}
So lemma follows.
\end{proof}

\begin{proposition}\label{ck1}
For any finite dimensional simple $\cK_0$ module $V$, we have $\cK_1V=0$. Hence, $V$ is a simple module over the finite dimensional Lie superalgebra $\cK_0/\cK_1$.
\end{proposition}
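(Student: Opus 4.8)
The plan is to show that $\cK_1$ acts nilpotently on the finite dimensional simple module $V$, and then invoke simplicity to conclude $\cK_1V=0$. The key structural input is Lemma \ref{ideal}: parts (2) and (3) tell us that the chain $\cK_1\supseteq\cK_2\supseteq\cK_3\supseteq\cdots$ consists of ideals of $\cK_0$ with $[\cK_1,\cK_\ell]\subseteq\cK_{\ell+1}$, so $\cK_1$ is (in an appropriate completed sense) pro-nilpotent, and the successive quotients $\cK_\ell/\cK_{\ell+1}$ are finite dimensional. First I would note that since $V$ is finite dimensional, the kernel of the action map $\cK_0\to\mathfrak{gl}(V)$ is a cofinite ideal of $\cK_0$; restricting to the copy of $(t-1)\cW(0)$ sitting inside $\cK_0$ (spanned by the $(t-1)D_{t^{k+1}}$, which is isomorphic to $(t-1)\cW(0)$), the kernel meets it in a cofinite ideal, so by Lemma \ref{cofinite} it contains $(t-1)^\ell\cW(0)$ for some large $\ell$.

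Next I would use Lemma \ref{ideal}(4): the ideal of $\cK_0$ generated by $(t-1)^\ell\cW(0)$ contains $\cK_\ell$. Hence $\cK_\ell V=0$ for this large $\ell$. It now remains to propagate this down to $\cK_1$. Consider the descending chain of subspaces $V\supseteq \cK_1V\supseteq \cK_1^2V\supseteq\cdots$, where $\cK_1^j V$ denotes the span of all $j$-fold products. Using $[\cK_1,\cK_m]\subseteq\cK_{m+1}$ inductively, one checks that $\cK_1^j V$ is spanned by expressions lying in $\cK_j V$ modulo lower terms; more precisely, I would prove by induction on $j$ that $\cK_1^{\,j}V \subseteq \sum_{m\ge j}\cK_m V$ up to reordering — the point being that repeatedly commuting elements of $\cK_1$ past one another pushes one into $\cK_2,\cK_3,\dots$. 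Since $\cK_\ell V=0$, this forces $\cK_1^{\,\ell}V=0$, so $\cK_1$ acts on $V$ by a Lie superalgebra of nilpotent operators in the sense that the associative subalgebra it generates in $\mathrm{End}(V)$ is nilpotent. Each $\cK_1V$ is a $\cK_0$-submodule of $V$ (because $\cK_1$ is an ideal of $\cK_0$), and therefore $\cK_1^{\,j}V$ is a $\cK_0$-submodule for every $j$; the chain terminates at $0$, so by simplicity of $V$ either $\cK_1 V=V$ or $\cK_1V=0$. The first is impossible since then $V=\cK_1 V=\cK_1^2V=\cdots=\cK_1^{\,\ell}V=0$. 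Hence $\cK_1V=0$, and $V$ descends to a simple module over $\cK_0/\cK_1$, which is finite dimensional by Lemma \ref{ideal} (the quotient is spanned by finitely many $D_{t^i\xi_I}$-classes).

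The main obstacle I anticipate is the bookkeeping in the inductive step $\cK_1^{\,j}V\subseteq\sum_{m\ge j}\cK_mV$: one must handle the super-signs in the brackets from \eqref{bracket1} and the fact that products in $\cK_1^{\,j}V$ are not automatically ordered, so reordering generates correction terms that must be checked to land in deeper $\cK_m$. A cleaner way to phrase this, which I would adopt, is to observe that $\cK_1/\cK_\ell$ is a finite dimensional nilpotent Lie superalgebra (immediate from $[\cK_1,\cK_m]\subseteq\cK_{m+1}$ together with $\cK_\ell V=0$, so only $\cK_1/\cK_\ell$ acts), and then apply the super-analogue of Engel's theorem / Lie's theorem for nilpotent Lie superalgebras acting on a finite dimensional space: the fixed-point space $V^{\cK_1}=\{v\in V: \cK_1 v=0\}$ is nonzero. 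Since $\cK_1$ is an ideal in $\cK_0$, this fixed-point space is a nonzero $\cK_0$-submodule, hence equals $V$ by simplicity, giving $\cK_1V=0$ directly. This is the route I would write up, as it sidesteps the explicit reordering entirely.
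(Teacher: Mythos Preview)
Your overall strategy coincides with the paper's: use Lemma~\ref{cofinite} to find $\ell$ with $(t-1)^\ell\cW(0)\subseteq\mathrm{ann}(V)$, invoke Lemma~\ref{ideal}(4) to get $\cK_\ell V=0$, then use an Engel-type argument to conclude $\cK_1V=0$. The gap is in your invocation of Engel. You assert that since $\cK_1/\cK_\ell$ is a finite-dimensional nilpotent Lie superalgebra, the fixed-point space $V^{\cK_1}$ is nonzero; but the super-Engel theorem (the one the paper cites from \cite{M}) requires that every homogeneous element act on $V$ as a \emph{nilpotent operator}, and abstract nilpotency of the Lie superalgebra does not give this---a one-dimensional abelian Lie algebra acting by a nonzero scalar is already a counterexample. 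Your first approach has the same problem in disguise: the containment $\cK_1^{\,j}V\subseteq\cK_j V$ cannot be extracted from $[\cK_1,\cK_m]\subseteq\cK_{m+1}$ alone, since reordering a product $x_1\cdots x_j$ only produces bracket corrections and leaves the symmetrized part untouched.

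The paper is slightly more careful at this step: it first asserts that each element of $\cK_{1,\bar 0}$ acts nilpotently on $V$, then observes that for odd $x\in\cK_{1,\bar 1}$ one has $x^2=\tfrac{1}{2}[x,x]\in\cK_{1,\bar 0}$, so odd elements are nilpotent as operators too, and only then applies super-Engel. The paper does not spell out why the even elements act nilpotently either, but the clean way to see it (and to repair your argument) is to use the grading by $\mathrm{ad}(D_{t-1})$: from the bracket formula in the proof of Lemma~\ref{ideal} one checks that $\mathrm{ad}(D_{t-1})$ has only strictly positive integer generalized eigenvalues on $\cK_1/\cK_\ell$, so every element of $\cK_1$ strictly raises the generalized $D_{t-1}$-weight on the finite-dimensional space $V$ and is therefore nilpotent. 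With this in hand, either your direct route ($\cK_1^{\,n}V=0$ for $n$ large, hence $\cK_1V\ne V$, hence $\cK_1V=0$ by simplicity) or the paper's route through Engel goes through.
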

\begin{proof}
From Lemma \ref{cofinite}, we know $V$ is annihilated by $(t-1)^\ell\cW(0)$ for some $\ell$. Hence, by Lemma \ref{ideal}, $\cK_\ell\subseteq\mathrm{ann}(V)$. Here, $\mathrm{ann}(V)$ is the ideal of $\cK_0$ that annihilates $V$. We are going to show $\cK_1\subseteq\mathrm{ann}(V)$. Consider the finite dimensional Lie superalgebra $\fg=\cK_0/\mathrm{ann}(V)$, then $V$ is a finite dimensional $\fg_{\bar{0}}$ module on which $\cK_{1,\bar{0}}+\mathrm{ann}(V)$ acts nilpotently. Since $[x,x]\in\cK_{1,\bar{0}}$ for all $x\in\cK_{1,\bar{1}}$, every element in $\cK_{1,\bar{1}}+\mathrm{ann}(V)$ acts nilpotently on $V$. Hence, by Engel's Theorem for Lie superalgebra (Theorem 2.1 in \cite{M}), there is a nonzero $v\in V$ annihilated by $\cK_1+\mathrm{ann}(V)$. Therefore, $\cK_1V=0$.
\end{proof}

\begin{lemma}\label{cat-so}
We have $\cK_0/\mathfrak{i}\cong\bC z\oplus\fs\fo_N$. Moreover, the category of simple finite dimensional module over $\cK_0$ is naturally equivalent to the category of simple finite dimensional module over $\bC z\oplus\fs\fo_N$.
\end{lemma}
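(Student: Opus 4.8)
I would treat the algebra isomorphism and the statement about simple modules separately. For the isomorphism I would first record explicit presentations of $\cK_0$, $\cK_1$ and $\mathfrak{i}$ using the $\bC[t^{\pm1}]$-module structure $f\cdot D_g=D_{fg}$. Since $\cK(N;\epsilon)=\bigoplus_{K\subseteq\overline{1,N}}\bC[t^{\pm1}]D_{\xi_K}$ and $\xi_I\cK(N;\epsilon)=\bigoplus_{K\supseteq I}\bC[t^{\pm1}]D_{\xi_K}$, and since the exponent $\max\{\ell+1-\lfloor\frac{d+\delta_{\ell,0}}{3}\rfloor,0\}$ is non-increasing in $d$, one obtains
\[
\cK_\ell=\bigoplus_{K}(t-1)^{m_\ell(|K|)}\bC[t^{\pm1}]D_{\xi_K},\qquad m_\ell(d)=\max\{\ell+1-\lfloor\tfrac{d+\delta_{\ell,0}}{3}\rfloor,0\},
\]
and similarly $\mathfrak{i}=\bigoplus_{K}(t-1)^{f(|K|)}\bC[t^{\pm1}]D_{\xi_K}$ with $f(0)=2$, $f(1)=f(2)=1$, $f(d)=0$ for $d\ge 3$. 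Comparing exponents shows $\cK_0/\mathfrak{i}$ has a basis given by the class $z$ of $(t-1)D_1$ (the $K=\varnothing$ slot) and the classes $\bar D_{\xi_i\xi_j}$, $1\le i<j\le N$ (the $|K|=2$ slots), so $\dim\cK_0/\mathfrak{i}=1+\binom N2$. A direct use of \eqref{bracket1} then gives $[D_{\xi_i\xi_j},D_{\xi_k\xi_l}]=0$ for $\{i,j\}\cap\{k,l\}=\varnothing$, $[D_{\xi_i\xi_j},D_{\xi_j\xi_l}]\equiv\pm D_{\xi_i\xi_l}\pmod{\mathfrak{i}}$, and $[(t-1)D_1,D_{\xi_i\xi_j}]=-(\epsilon_i+\epsilon_j)D_{(1-t^{-1})\xi_i\xi_j}\in\mathfrak{i}$; hence $z$ spans a central ideal, the $\bar D_{\xi_i\xi_j}$ span a complementary subalgebra with the structure constants of $\fs\fo_N$, and $\cK_0/\mathfrak{i}\cong\bC z\oplus\fs\fo_N$.

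For the module statement, let $V$ be a simple finite dimensional $\cK_0$-module. By Proposition \ref{ck1}, $\cK_1V=0$, so $V$ is a module over $\fg:=\cK_0/\cK_1$, and by Lemma \ref{ideal}(5) the image $\fa:=\mathfrak{i}/\cK_1$ is an ideal of $\fg$; everything reduces to showing $\fa V=0$. The decisive point, read off from the presentations above and \eqref{bracket1}, is that $\mathrm{ad}(\bar z)$ — with $\bar z$ the class of $(t-1)D_1$ in $\fg$ — acts semisimply on $\fa$ with all eigenvalues in $\{1,2,3\}\subseteq\bZ_{>0}$: one has $\fa=\bigoplus_{d=1}^{5}\fa_d$, with $\fa_d$ spanned by the classes of $(t-1)D_{\xi_K}$ for $|K|=d\le 2$ and of $D_{\xi_K}$ for $|K|=d\in\{3,4,5\}$, and $[\bar z,x]=\lambda_d x$ for $x\in\fa_d$ with $(\lambda_1,\dots,\lambda_5)=(1,2,1,2,3)$ — indeed $[(t-1)D_1,(t-1)^{\ell}D_{\xi_K}]$ has relevant term $(2\ell-2+d)(t-1)^{\ell}D_{\xi_K}$ ($\ell=1$ for $d\le 2$, $\ell=0$ for $d\ge 3$), all other terms lying in $\cK_1$.

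Granting this, decompose any finite dimensional $\fg$-module $V=\bigoplus_c V_{[c]}$ into generalized eigenspaces of $\bar z$. Since $[\bar z,x]=\lambda_d x$ for homogeneous $x\in\fa_d$, we get $x\cdot V_{[c]}\subseteq V_{[c+\lambda_d]}$, hence $x\cdot V_{[c]}\subseteq\sum_{j\ge 1}V_{[c+j]}$ for arbitrary $x\in\fa$, and finite-dimensionality forces every element of $\fa$ to act nilpotently on $V$. If $V$ is simple, Engel's theorem for Lie superalgebras (Theorem 2.1 in \cite{M}, as in Proposition \ref{ck1}) gives a nonzero $v\in V$ with $\fa v=0$; as $\fa$ is an ideal, $\{v\in V\mid\fa v=0\}$ is a $\fg$-submodule, hence all of $V$, so $\mathfrak{i}V=0$. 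Therefore every simple finite dimensional $\cK_0$-module factors through $\cK_0\twoheadrightarrow\cK_0/\mathfrak{i}\cong\bC z\oplus\fs\fo_N$, while pullback along this surjection sends simple finite dimensional $\bC z\oplus\fs\fo_N$-modules to simple finite dimensional $\cK_0$-modules; the two assignments are mutually inverse, giving the equivalence, which is natural because it comes from an algebra surjection.

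The step I expect to be the real work is purely computational: producing the two presentations and, above all, checking that $\mathrm{ad}(\bar z)$ has only positive integer eigenvalues on $\mathfrak{i}/\cK_1$ — this positivity is precisely what replaces the Lie-theoretic analysis of solvable ideals and lets the argument run uniformly in $N$ and $\epsilon$. I would also verify by hand that the small cases $N=1,2,3$ (where $\fs\fo_N$ is $0$, one-dimensional, or $\fs\fl_2$, and some of the $\fa_d$ are empty) are covered by the same computation.
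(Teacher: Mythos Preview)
Your proof is correct and follows essentially the same strategy as the paper: establish the isomorphism $\cK_0/\mathfrak{i}\cong\bC z\oplus\fs\fo_N$ by exhibiting a basis and checking brackets, then use Proposition~\ref{ck1} to reduce simple finite dimensional $\cK_0$-modules to $\cK_0/\cK_1$-modules, and finally compute $[\,\overline{(t-1)D_1}\,,-\,]$ on $\mathfrak{i}/\cK_1$ to force $\mathfrak{i}$ to act trivially.

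Your version is in fact more complete than the paper's. The paper records only the brackets with $D_{(t-1)\xi_i}$ and $D_{t\xi_I}$ for $|I|=3,4,5$ (omitting the $|I|=2$ piece of $\mathfrak{i}/\cK_1$) and passes directly to ``$\mathfrak{i}$ acts trivially''; you fill this in by exhibiting the full eigenspace decomposition $\fa=\bigoplus_{d=1}^5\fa_d$ with strictly positive eigenvalues $(1,2,1,2,3)$, deducing nilpotence from the generalized eigenspace shift, and then invoking Engel's theorem together with the ideal property of $\fa$ to conclude $\fa V=0$. The positivity observation is exactly the point that makes the Engel argument go through uniformly, and your closing remark about the small $N$ cases is a reasonable sanity check but not logically needed, since those cases just correspond to some $\fa_d$ being zero.
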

\begin{proof}
The map $D_t-D_1+\mathfrak{i}\mapsto z, D_{t\xi_i\xi_j}+\mathfrak{i}\mapsto E_{ji}-E_{ij}$ provides an isomorphism from $\cK_0/\mathfrak{i}$ to $\bC z\oplus\fs\fo_N$. And therefore any $\bC z\oplus\fs\fo_N$ module is naturally a $\cK_0$ module. Conversely, Proposition \ref{ck1} tells us that any simple finite dimensional $\cK_0$ module is a simple $\cK_0/\cK_1$ module. Then from
\begin{align*}
&[D_{t-1}+\cK_1,D_{(t-1)\xi_i}+\cK_1]=D_{(t-1)\xi_i}+\cK_1,\\
&[D_{t-1}+\cK_1,D_{t\xi_I}+\cK_1]=(2-|I|)D_{t\xi_I}+\cK_1, |I|=3,4,5,
\end{align*}
we know that $\mathfrak{i}$ acts trivially on any simple finite dimensional $\cK_0$ module.
\end{proof}

\section{Jet modules}
\label{cuspidalAK}

In this section, we will classify all simple jet modules in $\sJ(\cK(N;\epsilon), \cA)$.

\subsection{The construction of tensor modules}

We will apply the weighting functor introduced in \cite{N} to construct ``tensor modules" for $\widetilde{\cK}(N;\epsilon)$.  Let $\fg$ be $\cK(N;\epsilon)$ or $\cL:=\sum\limits_{|I|\ne1, k\in\bZ}\bC D_{t^k\xi_I}$. For any $\cA\fg$ module $P$ and $a\in\bC$, denote
\[
\fW^{(a)}(P):=\bigoplus\limits_{i\in(1+\delta_{\epsilon,\one})\bZ}(P/(D_t-a-i)P\otimes t^{a+i}).
\]
By Proposition 8 in \cite{N}, we know that $\fW^{(a)}(P)$ is an $\cA\fg$ module with
\[
x\cdot((v+(D_t-a-j)P)\otimes t^{a+j}):=(xv+(D_t-a-j-r)P)\otimes t^{a+j+r},
\]
for all $x\in(\fg\ltimes\cA)_r, v\in P, j\in(1+\delta_{\epsilon,\one})\bZ$.

\begin{lemma}\label{weighting}
Let $\fg$ be $\cK(N;\epsilon)$ or $\cL$. For any $M\in\sJ_a(\fg,\cA)$, we have $M\cong\fW^{(a)}(M)$ as $\cA\fg$ modules.
\end{lemma}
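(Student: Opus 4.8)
The plan is to exhibit the evident candidate isomorphism $\phi\colon M\to\fW^{(a)}(M)$ built from the weight-space decomposition of $M$, and then to check that $\phi$ is bijective and $\cA\fg$-linear; the linearity will be forced almost verbatim by the formula for the $\fW^{(a)}$-action recalled from Proposition~8 of \cite{N}, so the only real work is the (easy) bijectivity.

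First I would pin down $(D_t-a-i)M$ explicitly. Since $M\in\sJ_a(\fg,\cA)$, the operator $D_t$ is diagonalizable on $M$ and $\Supp(M)\subseteq a+(1+\delta_{\epsilon,\one})\bZ$, so $M=\bigoplus_{j\in(1+\delta_{\epsilon,\one})\bZ}M_{a+j}$. For a fixed $i\in(1+\delta_{\epsilon,\one})\bZ$, the operator $D_t-a-i$ acts on $M_{a+j}$ as the scalar $j-i$, which is invertible when $j\ne i$ and zero when $j=i$; hence $(D_t-a-i)M=\bigoplus_{j\ne i}M_{a+j}$, and the canonical projection $M\twoheadrightarrow M/(D_t-a-i)M$ restricts to a linear isomorphism $M_{a+i}\xrightarrow{\ \sim\ }M/(D_t-a-i)M$. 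Then I would define $\phi$ on weight spaces by $\phi(v):=\bigl(v+(D_t-a-j)M\bigr)\otimes t^{a+j}$ for $v\in M_{a+j}$ and extend linearly; by the previous sentence $\phi$ maps $M_{a+j}$ isomorphically onto the $j$-th summand of $\fW^{(a)}(M)$, so $\phi$ is a bijection.

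It remains to verify that $\phi$ intertwines the $\cA\fg$-actions. Take $x\in(\fg\ltimes\cA)_r$, i.e.\ an $\mathrm{ad}\,D_t$-eigenvector of eigenvalue $r\in(1+\delta_{\epsilon,\one})\bZ$, and $v\in M_{a+j}$; then $xv\in M_{a+j+r}$, so $\phi(xv)=\bigl(xv+(D_t-a-j-r)M\bigr)\otimes t^{a+j+r}$, which is precisely $x\cdot\phi(v)$ according to the displayed formula for the action on $\fW^{(a)}(M)$. Hence $\phi$ is an isomorphism of $\cA\fg$ modules. (Tracing the construction one sees, more than is asked, that $\fW^{(a)}$ restricted to $\sJ_a(\fg,\cA)$ is naturally isomorphic to the identity functor.) I do not expect a genuine obstacle: the argument is purely formal once $(D_t-a-i)M=\bigoplus_{j\ne i}M_{a+j}$ is in hand. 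The one point deserving a moment's care is that the index set $(1+\delta_{\epsilon,\one})\bZ$ appearing in the definition of $\fW^{(a)}$ agrees with $\Supp(M)-a$ and that $\cA$, not merely $\fg$, is handled — both are automatic since $M\in\sJ_a(\fg,\cA)$ and the $\fW^{(a)}$-action is defined on all of $\fg\ltimes\cA$.
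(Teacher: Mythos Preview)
Your proof is correct and follows essentially the same approach as the paper: define the obvious map $\phi$ on weight spaces, verify it is an $\cA\fg$-homomorphism using the formula for the $\fW^{(a)}$-action, and check bijectivity. The only cosmetic difference is that you establish bijectivity by explicitly computing $(D_t-a-i)M=\bigoplus_{j\ne i}M_{a+j}$, whereas the paper argues via injectivity together with $\dim M_{a+l}=\dim\fW^{(a)}(M)_{a+l}$.
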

\begin{proof}
Consider the map $\varphi: M\to\fW^{a}(M); v\mapsto\overline{v}\otimes t^{a+l}, \forall v\in M_{a+l}$. Then $\varphi$ is an $\cA\fg$ module homomorphism since
\[
\varphi(xv)=\overline{xv}\otimes t^{a+l+r}=x(\overline{v}\otimes t^{a+l}), \forall x\in(\fg\ltimes \cA)_r.
\]
From the facts that $\dim M_{a+l}=\dim\fW^{(a)}(M)_{a+l}$ and $\varphi$ is injective, we know that $\varphi$ is an isomorphism.
\end{proof}

By Lemma \ref{cat-so}, any  $\bC z\oplus\fs\fo_N$ module $V$ with $z$ acting as a scalar $c$ is naturally a $\cK_0$ module. Then for $a\in\bC$, $\cF_a(V,c):=\fW^{(a)}(\mathrm{Ind}_{\cK_0\ltimes\cA}^{\widetilde{\cK}(N;\epsilon)}V)=\oplus_{I\subseteq\overline{1,N}}\mathbf{D}_IV\otimes t^a\bC[t^{1+\delta_{\epsilon,\one}},t^{-1-\delta_{\epsilon,\one}}]$ is a $\widetilde{\cK}(N;\epsilon)$ module under the following actions
\begin{align}
t^k\xi_I.\mathbf{D}_Jv\otimes t^{a+l}&=\delta_{|I\setminus J|,0}(-1)^{\tau(I,J\setminus I)+\frac{|I|(|I|+1)}{2}}\mathbf{D}_{J\setminus I}v\otimes t^{a+l+2k+\sum\limits_{i\in I}(1-\epsilon_i)},\\
D_{t^{k+1}\xi_I}.v\otimes t^{a+l}&=\begin{cases}
(a+l+2k+ck)v\otimes t^{a+l+2k}, & I=\emptyset,\\
D_{t\xi_i}v\otimes t^{a+l+2k+1-\epsilon_i}, & I=\{i\},\\
0, & |I|>2,\\
(E_{sr}-E_{rs})v\otimes t^{a+l+2k+2-\epsilon_s-\epsilon_r}, & I=\{r,s\}, r<s;
\end{cases}\\
D_{t^{k+1}\xi_I}.\mathbf{D}_J\bar{v}\otimes t^{a+l}&=[D_{t^{k+1}\xi_I},D_{t\xi_{j_1}}]\mathbf{D}_{J\setminus \{j_1\}}\overline{v}\otimes t^{a+l-(1-\epsilon_{j_1})}\nonumber\\
&+(-1)^{|I|}D_{t\xi_{j_1}}D_{t^{k+1}\xi_I}\mathbf{D}_{J\setminus\{j_1\}}\overline{v}\otimes t^{a+l-(1-\epsilon_{j_1})},
\end{align}
here for $I=\{i_1,\cdots,i_s\}\subseteq\overline{1,N}$ with $i_1<\cdots<i_s$, denote by $\mathbf{D}_I:=D_{t\xi_{i_1}}D_{t\xi_{i_2}}\cdots D_{t\xi_{i_s}}$. Moreover, $\cF_a(V,c)$ belongs to $\sJ_a(\cK(N;\epsilon),\cA)$ if and only if $V$ is finite dimensional.

\begin{proposition}
Let $V$ be a simple finite dimensional $\bC z\oplus\fs\fo_N$ module with $z$ acting as $c\in\bC$ and let $a\in\bC$.
\begin{enumerate}
\item $\cF_a(V,c)$ is a simple module in $\sJ_a(\cK(N;\one),\cA)$.
\item If $\epsilon=\zero$, let $\cF_a(V,c)_\delta:=\bigoplus\limits_{J\subseteq\overline{1,N}}\mathbf{D}_JV\otimes t^{a+\delta+|J|}\bC[t^2,t^{-2}]$, where $\delta=0$ or $1$. Then $\cF_a(V,c)_\delta$ are simple $\cA\cK(N;\zero)$ submodules of $\cF_a(V,c)$ and $\cF_a(V,c)=\cF_a(V,c)_0\oplus\cF_a(V,c)_1$.
\end{enumerate}
\end{proposition}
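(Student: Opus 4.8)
The plan is to prove simplicity by a weight-space/induction argument, exploiting the explicit $\widetilde{\cK}(N;\epsilon)$-action on $\cF_a(V,c)$ together with the fact that $V$ is already a simple $\bC z\oplus\fs\fo_N$-module. Write $\cF=\cF_a(V,c)$ and note that as a vector space $\cF=\bigoplus_{I\subseteq\overline{1,N}}\mathbf{D}_IV\otimes t^a\bC[t^{\pm(1+\delta_{\epsilon,\one})}]$. Let $0\ne W\subseteq\cF$ be a nonzero $\cA\cK(N;\epsilon)$-submodule; the goal is to show $W=\cF$. First I would observe that since $D_t$ acts diagonalizably with the displayed eigenvalues, $W$ is spanned by its weight vectors, so we may pick a nonzero weight vector $w\in W$. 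Applying suitable monomials $t^k\xi_I\in\cA$ (which, by the first displayed action formula, act by "lowering" $\mathbf D_J$ to $\mathbf D_{J\setminus I}$ up to a nonzero scalar and shifting the power of $t$) we can reach a nonzero element of $W$ of the form $v\otimes t^{a+l}$ with $v\in V$; here it is crucial that the relevant structure constants $(-1)^{\tau(I,J\setminus I)+|I|(|I|+1)/2}$ are nonzero, which they are. Then acting by all of $\cA$ again we get $v\otimes t^{a+l'}$ for every admissible $l'$, and acting by $\mathbf D_I$ (obtained from iterated brackets of the $D_{t\xi_i}$, which lie in $\cK(N;\epsilon)$) we recover $\mathbf D_I v\otimes t^{a+l''}$ for all $I$; so it suffices to show $V\otimes t^{a+l}\subseteq W$ for one, hence every, $l$.

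The key step is then: once $v\otimes t^{a+l}\in W$ for some $0\ne v\in V$, show the whole "fiber" $V\otimes t^{a+l}$ lies in $W$. For this I would use that $\cK_0$ acts on $V\otimes t^{a+l}$ (after correcting the $t$-shift) through its quotient $\cK_0/\cK_1\twoheadrightarrow\bC z\oplus\fs\fo_N$ via the explicit formulas for $D_{t^{k+1}}, D_{t\xi_i}, D_{t\xi_{r,s}}$ with $I=\emptyset,\{i\},\{r,s\}$: combining $D_{t\xi_r}$, $D_{t\xi_s}$ acting and then the $\cA$-element $t^{?}\xi_r$ (or directly $D_{t^{k+1}\xi_{r,s}}$) realizes the full $\fs\fo_N$-action $E_{sr}-E_{rs}$ on the $V$-component, while $D_{t^{k+1}}$ realizes $z$ up to a scalar. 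Hence the $\fs\fo_N$-submodule generated by $v$ inside $V\otimes t^{a+l}$ is contained in $W$; since $V$ is a simple $\fs\fo_N$-module (the $z$-action being central and scalar, simplicity over $\bC z\oplus\fs\fo_N$ forces simplicity over $\fs\fo_N$), this submodule is all of $V\otimes t^{a+l}$. Combining with the previous paragraph gives $W=\cF$, proving (1).

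For part (2), with $\epsilon=\zero$ the lattice of weights is $a+\bZ$ and the $\cK(N;\zero)$-action shifts the power of $t$ by $2k+\sum_{i\in I}(1-\epsilon_i)=2k+|I|$ on $t^k\xi_I$ and by $2k$ on $D_{t^{k+1}\xi_I}$ with $\mathbf D_J$-degree changing by $|I|$; in both cases the quantity $(\text{power of }t)-|J|$ changes by an even integer. This is precisely the parity that makes $\cF_a(V,c)_\delta=\bigoplus_J\mathbf D_JV\otimes t^{a+\delta+|J|}\bC[t^2,t^{-2}]$ stable under $\cA\cK(N;\zero)$, so $\cF_a(V,c)=\cF_a(V,c)_0\oplus\cF_a(V,c)_1$ as $\cA\cK(N;\zero)$-modules. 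Simplicity of each $\cF_a(V,c)_\delta$ follows by repeating the argument of (1): the same lowering-by-$\cA$, raising-by-$\mathbf D_I$, and $\bC z\oplus\fs\fo_N$-irreducibility steps work verbatim within a single $\delta$-summand, since all the operations used preserve the $\delta$-parity.

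The main obstacle I anticipate is the first reduction step — verifying carefully that by acting with elements of $\cA$ on an arbitrary nonzero weight vector $w=\sum_J \mathbf D_J v_J\otimes t^{\cdots}$ one can isolate a nonzero pure term $v\otimes t^{a+l}$ with $v\in V$: one must choose $I$ to kill all but the "top" $\mathbf D_J$-terms and check the resulting coefficient is nonzero, which requires bookkeeping with the sign $(-1)^{\tau(I,J\setminus I)+|I|(|I|+1)/2}$ and, in the $\epsilon=\zero$ case, staying inside the correct $\delta$-summand. Everything after that is a routine translation of the bracket relations into operators on the $V$-fiber.
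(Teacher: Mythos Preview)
Your proposal is correct and follows essentially the same route as the paper: reduce to a weight vector, apply $\xi_I\in\cA$ to strip off $\mathbf{D}_J$ and land in $V\otimes t^{a+l}$, use $D_{t^{k+1}\xi_{r,s}}$ to realize the $\fs\fo_N$-action and hence get all of $V\otimes t^{a+l}$ by simplicity of $V$, then rebuild with $D_{t\xi_i}$ and $\cA$; for part (2) both you and the paper invoke the obvious parity and repeat the argument. One small slip: $\mathbf{D}_I$ is a \emph{product} $D_{t\xi_{i_1}}\cdots D_{t\xi_{i_s}}$ in $U(\cK(N;\epsilon))$, not an iterated bracket, so you should say ``applying the $D_{t\xi_i}$ successively'' rather than ``iterated brackets''; also your parity bookkeeping for $D_{t^{k+1}\xi_I}$ is stated a bit loosely (e.g.\ for $|I|=2$ the $t$-shift is $2k+2$ while $|J|$ is unchanged), but the invariant you want, namely that $(\text{exponent of }t)-|J|$ changes by an even integer under every generator, is correct and the check is routine.
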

\begin{proof}
\begin{enumerate}
\item Let $W$ be a nonzero submodule of $\cF_a(V,c)$. Since $\cF_a(V,c)$ is a weight module, we may assume that $W$ contains a nonzero element of the form $\sum\limits_{J\subseteq\overline{1,N}}D_Jv_J\otimes t^{a+l}$. Applying $\xi_J$, we know that $W$ contains a nonzero element of the form $v\otimes t^{a+l}$ for some $v\in V$ and some $l\in\bZ$. With the actions of $D_{t^{k+1}\xi_i\xi_j}$, we get $(E_{ji}-E_{ij})v\otimes t^{a+l+2k}\in W$ for all $i,j\in\overline{1,N}$ and $k\in\bZ$. Finally, from the actions of $D_{t\xi_i}$ ($1\le i\le N$), we get $W=\cF_a(V,c)$.
\item It is easy to check that $\cF_a(V,c)_\delta$ are $\cA\cK(N;\zero)$ submodules when $\epsilon=\zero$. Using the same argument as 1, we deduce that $\cF_a(V,c)_\delta$ is simple. The fact $\cF_a(V,c)=\cF_a(V,c)_0\oplus\cF_a(V,c)_1$ is clear.\qedhere
\end{enumerate}
\end{proof}

\begin{proposition}\label{isomtensor}
Let $a,b\in\bC$ and let $V,V'$ be a simple finite dimensional $\bC z\oplus\fs\fo_N$ module with $z$ acting as $c,c'\in\bC$, respectively.
\begin{enumerate}
\item For $\epsilon=\one$, $\cF_a(V,c)\cong\cF_b(V',c')$ as $\cA\cK(N;\one)$ modules if and only if $V\cong V'$ as $\bC z\oplus\fs\fo_N$ modules (from which one deduce that $c=c'$) and $a-b\in2\bZ$.
\item Suppose $\epsilon=\zero$.
\begin{enumerate}
\item As $\cA\cK(N;\zero)$ modules, we have $\cF_a(V,c)_1\cong\cF_{a+1}(V,c)_0$.
\item As $\cA\cK(N;\zero)$ modules, we have $\cF_a(V,c)_0\cong\cF_b(V',c')_0$ if and only if $V\cong V'$ as $\bC z\oplus\fs\fo_N$ modules and $a-b\in2\bZ$.
\end{enumerate}
\end{enumerate}
\end{proposition}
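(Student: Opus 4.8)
The plan is to prove the two implications separately and to handle the three cases 1, 2(a), 2(b) through one common mechanism.

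\emph{The ``if'' directions.} These I would get from functoriality. Both $V\mapsto\mathrm{Ind}_{\cK_0\ltimes\cA}^{\widetilde\cK(N;\epsilon)}V$ and the weighting functor $\fW^{(a)}$ are functors, so an isomorphism $\phi\colon V\xrightarrow{\ \sim\ }V'$ of $\bC z\oplus\fs\fo_N$-modules (which forces $c=c'$, since $z$ acts by these scalars) induces an isomorphism $\cF_a(V,c)\xrightarrow{\ \sim\ }\cF_a(V',c')$, given on the defining spanning set by $\mathbf{D}_Iv\otimes t^\mu\mapsto\mathbf{D}_I\phi(v)\otimes t^\mu$; that this is a module map is checked against the action formulas defining $\cF_a(V,c)$, the recursion in the formula for the action of $D_{t^{k+1}\xi_I}$ on $\mathbf{D}_J v$ reducing everything to the fact that $\phi$ intertwines $z$ and $\fs\fo_N$. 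Since $\fW^{(a)}$ depends only on the coset $a+(1+\delta_{\epsilon,\one})\bZ$, one has $\cF_a(V',c')=\cF_b(V',c')$ whenever $a-b\in(1+\delta_{\epsilon,\one})\bZ$; combined with the previous sentence this gives part 1. Part 2(a) then becomes a tautology: for $\epsilon=\zero$ we have $\fW^{(a)}=\fW^{(a+1)}$, so $\cF_a(V,c)=\cF_{a+1}(V,c)$ and the submodules $\cF_a(V,c)_1$ and $\cF_{a+1}(V,c)_0$ coincide, both being the span of the $\mathbf{D}_Jv\otimes t^\mu$ with $\mu-|J|\in a+1+2\bZ$. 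For part 2(b) ``if'', when $a-b\in2\bZ$ the isomorphism above carries the condition $\mu-|J|\in a+2\bZ$ to $\mu-|J|\in b+2\bZ$, so it restricts to $\cF_a(V,c)_0\xrightarrow{\ \sim\ }\cF_b(V',c')_0$.

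\emph{The ``only if'' directions.} Here the main device is the isomorphism invariant
\[
S(M):=\bigl\{\,\mu\in\bC\ :\ \mathrm{ann}_M\bigl(\Lambda^+(N)\bigr)\cap M_\mu\ne0\,\bigr\},\qquad \Lambda^+(N):=\bigoplus_{\emptyset\ne I\subseteq\overline{1,N}}\bC\xi_I\subseteq\cA,
\]
which is preserved by every $\cA\cK(N;\epsilon)$-module isomorphism, because such a map is $\cA$-linear and weight-preserving. A short computation with the action of the $t^k\xi_I$ shows that in $\cF_a(V,c)$ (for $\epsilon=\one$) and in $\cF_a(V,c)_0$ (for $\epsilon=\zero$) one has $\mathrm{ann}_M(\Lambda^+(N))\cap M_\mu=V\otimes t^\mu$ for $\mu\in a+2\bZ$ and $=0$ otherwise; hence $S(M)=a+2\bZ$ in both situations, and an isomorphism forces $a-b\in2\bZ$. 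I expect this to be the delicate step: in the $\epsilon=\zero$ case the supports alone only give $a-b\in\bZ$, since $\cF_a(V,c)_0$ and $\cF_b(V',c')_0$ have the same support $a+\bZ=b+\bZ$, so one genuinely needs the finer invariant $S$ to fix the parity of the shift. With $a-b\in2\bZ$ in hand, the ``if'' direction lets us replace $b$ by $a$.

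\emph{Recovering $V$ and $c$.} Let $\Psi\colon\cF_a(V,c)\xrightarrow{\ \sim\ }\cF_a(V',c')$ (or its $\delta=0$ variant) be an isomorphism. Being weight-preserving and $\cA$-linear, $\Psi$ sends the distinguished subspace $V\otimes t^a=\mathrm{ann}_M(\Lambda^+(N))\cap M_a$ onto $V'\otimes t^a$. The weight-zero element $\widetilde D_{rs}:=D_{t^{k_0+1}\xi_{rs}}$, where $k_0$ is the integer with $2k_0+2-\epsilon_r-\epsilon_s=0$ (integral because $\epsilon\in\{\zero,\one\}^N$), maps $V\otimes t^a$ into itself and acts there as $E_{sr}-E_{rs}$; hence $\Psi|_{V\otimes t^a}$ is $\fs\fo_N$-equivariant and $V\cong V'$ as $\fs\fo_N$-modules. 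Finally, the operator $m\mapsto D_1(t\cdot m)$ (the $\cK$-element $D_1$ applied after the $\cA$-action of $t$) is weight-preserving and acts on $V\otimes t^a$ as the scalar $a-c$; as $\Psi$ intertwines $D_1$ and $t$ and fixes $V\otimes t^a$, we get $a-c=a-c'$, so $c=c'$ and $V\cong V'$ as $\bC z\oplus\fs\fo_N$-modules, which finishes the proof. A second, milder subtlety lives here: the naive scalars one can read off, such as that of $D_{t^2}D_1$ on $V\otimes t^a$, only determine $(c+1)^2$, and it is the $\cA$-action of $t$, supplying a canonical bijection $V\otimes t^a\xrightarrow{\ \sim\ }V\otimes t^{a+2}$, that breaks this symmetry.
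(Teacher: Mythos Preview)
Your proof is correct and follows essentially the same strategy as the paper: locate $V\otimes t^a$ inside a weight space, then read off $c$ from a weight-preserving scalar operator and the $\fs\fo_N$-action from the weight-zero operators $D_{t^{k_0+1}\xi_{rs}}$. The one genuine refinement is your invariant $S(M)=\{\mu:\mathrm{ann}_M(\Lambda^+(N))\cap M_\mu\ne0\}$: the paper simply writes ``from the action of $D_t$, we may assume $\tilde\gamma(v\otimes t^a)=v'\otimes t^a$'', which is imprecise since the weight-$a$ space is all of $\bigoplus_J\mathbf{D}_JV'\otimes t^a$, and your use of the $\xi_I$-annihilator is exactly what is needed to single out $V'\otimes t^a$ (and, as you observe, is indispensable in 2(b), where supports alone only force $a-b\in\bZ$). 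Your treatments of 2(a) (recognizing $\cF_a(V,c)_1$ and $\cF_{a+1}(V,c)_0$ as the same subspace of $\cF_a(V,c)=\cF_{a+1}(V,c)$) and of the scalar $c$ (via $m\mapsto D_1(t\cdot m)$) are minor repackagings of the paper's explicit identity map and its pair $t^k,\,D_{t^{k+1}}$.
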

\begin{proof}
\begin{enumerate}
\item The sufficiency is obvious. For the necessity, suppose $\tilde{\gamma}: \cF_a(V,c)\xrightarrow{\sim}\cF_b(V',c')$. Then $\Supp(\cF_a(V,c))=\Supp(\cF_b(V',c'))$. Hence, $a-b\in2\bZ$. Let $v\in V$ be nonzero. From the action of $D_t$, we may assume that $\tilde{\gamma}(v\otimes t^a)=v'\otimes t^a$ for some nonzero $v'\in V'$. Then
    \[
    \tilde{\gamma}(v\otimes t^{a+2k})=\tilde{\gamma}(t^k\cdot(v\otimes t^a))=t^k\cdot\tilde{\gamma}(v\otimes t^a)=t^k\cdot(v'\otimes t^a)=v'\otimes t^{a+2k}, \forall k\in\bZ.
    \]
    Therefore,
    \begin{align*}
    (a+(c+2)k)v'\otimes t^{a+2k}&=\tilde{\gamma}(D_{t^{k+1}}\cdot(v\otimes t^a))\\
    &=D_{t^{k+1}}\cdot\tilde{\gamma}(v\otimes t^{a})=(a+(c'+2)k)v'\otimes t^{a+2k}, \forall k\in\bZ.
    \end{align*}
    So, $c=c'$. Thus, statement is true for $N=1$. For $N\ge 2$, we have $\tilde{\gamma}(V\otimes t^a)=V'\otimes t^a$. Then from the actions of $D_{t\xi_i\xi_j}$, one can deduce that $\tilde{\gamma}|_{V\otimes t^a}$ induces an $\fs\fo_N$ module isomorphism from $V$ to $V'$.
\item The proof of statement (b) is similar as the proof of statement 1. For statement (a), it is easy to check that the map from $\cF_a(V,c)_1$ to $\cF_{a+1}(V,c)_0$ defined by mapping $\mathbf{D}_Jv\otimes t^{a+1+|J|+2k}$ to $\mathbf{D}_Jv\otimes t^{a+1+|J|+2k}$ is an $\cA\cK(N;\zero)$ module isomorphism.\qedhere
\end{enumerate}
\end{proof}

\subsection{Classification of simple Jet modules}

Let $a\in\bC$ and $M\in\sJ_a(\cK(N;\epsilon),\cA)$ be nonzero and simple. We claim that $M':=\{v\in M\,|\,\xi_iv=0, \forall i\in\overline{1,N}\}$ is an $\cL\ltimes\cA$ module. Clearly, we have $\cA M'\subseteq M'$. Hence, claim follows from
\[
[D_{t^{k+1}\xi_I},\xi_i]=\delta_{i\notin I}(k+1-\frac{1}{2}\sum\limits_{j\in I}\epsilon_j+\frac{1}{2}(|I|-2)\epsilon_i)t^k\xi_{I,i}+\delta_{i\in I}(-1)^{|I|}t^{k+1-\epsilon_i}\partial_i(\xi_I).
\]
Let $\cL':=(t-1)\sum\limits_{k\in\bZ}\bC D_{t^k}+\sum\limits_{k\in\bZ, |I|>1}\bC D_{t^k\xi_I}$. Then $(t-1)M'$ is an $\cL'\ltimes\cA$ module since $[D_{t^{k+1}\xi_I},t-1]=(2-|I|)t^{k+1}\xi_I$. We will classify simple modules in $\sJ_a(\cK(N;\epsilon),\cA)$ by showing the following diagram commutes for $M'/(t-1)M'$:
\[
\xymatrix{\cL'\ltimes\cA\mathrm{-Mod}\ar[r]^{\mathrm{Ind}}\ar[d]^{\mathrm{Ind}} & \cL\ltimes\cA\mathrm{-Mod}\ar[r]^{\fW^{(a)}} &\cL\times\cA\mathrm{-Mod}\ar[d]^{\mathrm{Ind}}\\
\cK_0\ltimes\cA\mathrm{-Mod}\ar[r]^{\mathrm{Ind}} & \cK\ltimes\cA\mathrm{-Mod}\ar[r]^{\fW^{(a)}} & \cK\ltimes\cA\mathrm{-Mod}
}
\]

Indeed, we will prove
\begin{theorem}\label{jetmodule}
Any simple module in $\sJ_a(\cK(N;\epsilon),\cA)$ is isomorphic to some simple quotient of some $\cF_a(V,c)$ for some simple finite dimensional $\bC z\oplus\fs\fo_N$ module $V$ with $z$ acting as $c\in\bC$.
\end{theorem}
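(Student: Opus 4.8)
The plan is to realize an arbitrary simple $M \in \sJ_a(\cK(N;\epsilon),\cA)$ as a quotient of $\cF_a(V,c)$ by building up the structure along the commutative diagram displayed above. First I would work with the submodule $M' = \{v \in M \mid \xi_i v = 0,\ \forall i\}$, which the excerpt already shows is an $\cL \ltimes \cA$-module, and its quotient $M'/(t-1)M'$, which carries an action of $\cL'/(t-1)\cL'$ and hence (after restriction) of $\cK_0/\cK_1 \cong \bC z \oplus \fs\fo_N$ via Lemma~\ref{cat-so}. The key point here is that $\cK_0$ acts on $M'/(t-1)M'$ through a \emph{finite-dimensional} quotient: since $M$ is a bounded (cuspidal) module, $M'/(t-1)M'$ is finite-dimensional, so by Lemma~\ref{cofinite} and Proposition~\ref{ck1} the action factors through $\cK_0/\cK_1$, and then through $\cK_0/\mathfrak{i} \cong \bC z \oplus \fs\fo_N$ by Lemma~\ref{cat-so}. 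Choose a simple subquotient $V$ of $M'/(t-1)M'$ as a $\bC z \oplus \fs\fo_N$-module, with $z$ acting as some scalar $c$.

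Next I would produce a nonzero $\widetilde{\cK}(N;\epsilon)$-module map $\cF_a(V,c) \to M$. By construction $\cF_a(V,c) = \fW^{(a)}(\mathrm{Ind}_{\cK_0 \ltimes \cA}^{\widetilde{\cK}(N;\epsilon)} V)$, so by the universal property of induction and Lemma~\ref{weighting} (which identifies $M$ with $\fW^{(a)}(M)$, functorially in the $\cA\cK(N;\epsilon)$-module $M$), it suffices to produce a $\cK_0 \ltimes \cA$-module map $V \to M$; equivalently a $\cK_0$-module embedding of $V$ into $M'$ landing in a weight space $M_{a+l}$ on which $\cA$ acts compatibly. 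Here one must be careful: $V$ is only a subquotient of $M'/(t-1)M'$, not literally a submodule of $M'$. To fix this, I would instead argue contravariantly — produce a \emph{nonzero homomorphism} $\cF_a(V^*, \cdot) \to M$ or, more cleanly, observe that $\mathrm{Hom}_{\cK_0\ltimes\cA}(V, M')$ is nonzero because $M'$ has a finite-dimensional quotient containing $V$ as a submodule (pass to the appropriate subquotient and lift along the projectivity/injectivity afforded by semisimplicity of $\bC z \oplus \fs\fo_N$ in the finite-dimensional setting, combined with Weyl's theorem applied to the reductive quotient). Composing with the adjunction $\mathrm{Hom}_{\cK_0\ltimes\cA}(V,M') \cong \mathrm{Hom}_{\widetilde{\cK}(N;\epsilon)}(\mathrm{Ind}\, V, M')$ and the weighting functor then gives a nonzero map $\cF_a(V,c) \to M$, which is surjective since $M$ is simple. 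Hence $M$ is a simple quotient of $\cF_a(V,c)$.

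The technical heart of the argument — and the step I expect to be the main obstacle — is verifying that the diagram of induction and weighting functors actually commutes on $M'/(t-1)M'$, i.e. that the module generated inside $M$ by $V$ (realized inside the $\xi$-annihilated, $(t-1)$-reduced part) is exactly the image of $\cF_a(V,c)$, with the $\cA$- and $\cK(N;\epsilon)$-actions matching the explicit formulas~(3.2)--(3.4). Concretely one must check that $\mathbf{D}_J v$, built by hitting $v$ with products $D_{t\xi_{i_1}}\cdots D_{t\xi_{i_s}}$, spans the relevant weight spaces and that the $\xi_I$-actions lower $|J|$ exactly as in~(3.2); this requires the bracket relations~(\ref{bracket1}) and the commutator $[D_{t^{k+1}\xi_I}, \xi_i]$ recorded just before the statement, plus the observation that $M$ is generated over $\cA$ by $M'$ (which follows from $\cA$ being a finite free module over its even part and $M$ being a jet module). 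The parity/sign bookkeeping $\tau(I,J\setminus I) + \tfrac{|I|(|I|+1)}{2}$ and the case split on $|I|$ in~(3.3) are routine but must be tracked carefully. Once commutativity of the diagram is established, surjectivity of $\cF_a(V,c) \twoheadrightarrow M$ is automatic from simplicity, and the theorem follows. In the Neveu--Schwarz case $\epsilon = \zero$ one should additionally note, using Proposition~\ref{isomtensor}(2)(a), that it suffices to take the $\delta = 0$ component, so the statement as phrased (``some simple quotient of some $\cF_a(V,c)$'') covers both sectors uniformly.
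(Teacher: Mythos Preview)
Your proposal has a genuine gap at its first step: $M'/(t-1)M'$ is \emph{not} a $\cK_0$-module, only an $\cL'$-module. The odd part of $\cK_0$ contains elements of the form $(t-1)D_{t^{k+1}\xi_i}$, and these do not preserve $M'$: from the bracket $[D_{t^{k+1}\xi_i},\xi_i]=-t^{k+1-\epsilon_i}$ one sees that for $v\in M'$ one has $\xi_i D_{t^{k+1}\xi_i}v=-t^{k+1-\epsilon_i}v\neq 0$ in general. So neither Proposition~\ref{ck1} nor Lemma~\ref{cat-so} can be applied to $M'/(t-1)M'$ directly, and the adjunction $\mathrm{Hom}_{\cK_0\ltimes\cA}(V,M')\cong\mathrm{Hom}_{\widetilde{\cK}(N;\epsilon)}(\mathrm{Ind}\,V,M')$ you invoke is not even well-posed, since $M'$ carries no $\cK_0$-action (and certainly no $\widetilde{\cK}(N;\epsilon)$-action).

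The paper circumvents this by first \emph{inducing} $M'/(t-1)M'$ from $\cL'\ltimes\cA$ up to $\cK_0\ltimes\cA$, obtaining an \emph{infinite}-dimensional module $\cM$, and then inducing further to $\widetilde{\cK}(N;\epsilon)$ to get $\bM$. After showing (via Lemma~\ref{modisom1} and the twist by $\Phi$, which your outline also omits) that $\fW^{(a)}(\bM)$ surjects onto $M^\Phi$, the real work is to prove that this surjection factors through $\fW^{(a)}(\mathrm{Ind}_{\cK_0\ltimes\cA}^{\widetilde{\cK}(N;\epsilon)}V)$ for some \emph{finite}-dimensional $\cK_0$-module $V$. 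This is done by invoking the differential-operator relations on bounded modules (\cite[Lemma~4.4]{CLW}) to show that $(t-1)^\ell\cW(0)\cdot\cM$ lands in the kernel, and then using Lemma~\ref{ideal}(4) to conclude that all of $\cK_{\ell+1}\cM$ does. Only then does one have a finite-dimensional $\cK_0$-module to which Lemma~\ref{cat-so} applies, and a composition-series argument reduces to simple $V$. This finiteness step is the technical heart of the proof and is entirely absent from your plan; the semisimplicity/Weyl-theorem argument you propose cannot substitute for it, because the relevant $\cK_0$-module is a priori infinite-dimensional and $\cK_0$ is far from reductive.
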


Note that $M'/(t-1)M'$ is a finite dimensional $\cL'\ltimes\cA$ module. Consider the $\cL\ltimes\cA$ module
\[
\widetilde{M}:=\mathrm{Ind}_{\cL'\ltimes\cA}^{\cL\ltimes\cA}M'/(t-1)M'=\bC[D_t]\otimes M'/(t-1)M'.
\]
Applying the weighting functor, via identify $M'/(t-1)M'$ with $\widetilde{M}/(D_t-a-j)\widetilde{M}$, we have $\fW^{(a)}(\widetilde{M})=M'/(t-1)M'\otimes t^a\bC[t^{1+\delta_{\epsilon,\one}}, t^{-1-\delta_{\epsilon,\one}}]$, with the actions
\begin{equation}\label{action1}
\begin{aligned}
t^k\xi_I.\bar{v}\otimes t^{a+l}&=\delta_{|I|,0}\bar{v}\otimes t^{a+l+2k},\\
D_{t^{k+1}\xi_I}.\bar{v}\otimes t^{a+l}&=\begin{cases}
((D_{t^{k+1}}-D_t)\bar{v}+(a+l+2k)\bar{v})\otimes t^{a+l+2k}, & I=\emptyset,\\
\overline{D_{t^{k+1}\xi_I}v}\otimes t^{a+l+2k+\sum\limits_{i\in I}(1-\epsilon_i)}, & |I|>1.
\end{cases}
\end{aligned}
\end{equation}
Clearly, for $\Phi$ defined in Lemma \ref{algisom}, $\Phi|_{\cL\ltimes\cA}$ is still an automorphism of $\cL\ltimes\cA$. For convenience, we still denote $\Phi|_{\cL\ltimes\cA}$ by $\Phi$.

\begin{lemma}\label{modisom1}
As $\cL\ltimes\cA$ modules, $\fW^{(a)}(\widetilde{M})$ is isomorphic to $(M')^\Phi$.
\end{lemma}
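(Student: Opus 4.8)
The plan is to exhibit an explicit linear map $\psi\colon (M')^\Phi\to\fW^{(a)}(\widetilde M)$ and verify that it is an isomorphism of $\cL\ltimes\cA$-modules. First I would record the structural facts about $M'$ that make the map natural: since $M$ is a jet module, $t\in\cA$ acts invertibly on $M$, hence on $M'$, and because $[D_t,t]=2t$ the multiplication by $t$ restricts to linear isomorphisms $M'_\mu\xrightarrow{\ \sim\ }M'_{\mu+2}$; thus $M'$ is a free $\bZ$-graded $\bC[t^{\pm1}]$-module (with $\deg t=2$) with finite-dimensional homogeneous pieces. Next, exactly as in the paragraph preceding the lemma, $(t-1)M'$ is a subsupermodule of $\Res_{\cL'\ltimes\cA}M'$, so the quotient map $q\colon M'\to M'/(t-1)M'$ is $\cL'\ltimes\cA$-linear and satisfies $q(tw)=q(w)$ for all $w\in M'$. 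Using the identification of $M'/(t-1)M'$ with $\widetilde M/(D_t-a-\mu)\widetilde M$ employed in the description of $\fW^{(a)}(\widetilde M)$, I would then define $\psi(v):=q(v)\otimes t^{\mu}$ for $v\in M'_\mu$ and extend linearly.

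The second step is to check that $\psi$ intertwines the actions, matching them against the formulas (\ref{action1}) generator by generator. The observation that makes this manageable is that for every $I$ with $|I|\ge 1$ one has $\xi_I M'=0$, so $t^k\xi_I$ acts as $0$ on $M'$ and $\Phi(D_{t^{k+1}\xi_I})=D_{t^{k+1}\xi_I}+\bigl(2k+\sum_{i\in I}(1-\epsilon_i)\bigr)t^k\xi_I$ acts on $M'$ simply as $D_{t^{k+1}\xi_I}$; hence the $\Phi$-twist changes only the action of $D_{t^{k+1}}$, which on $(M')^\Phi$ becomes $D_{t^{k+1}}+2k\,t^k$. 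The cases $x=t^k$ and $x=t^k\xi_I$ ($|I|\ge1$) are then immediate from $q(tw)=q(w)$, and the case $x=D_{t^{k+1}\xi_I}$ with $|I|>1$ is immediate from the $\cL'$-linearity of $q$. For $x=D_{t^{k+1}}$ I would use that $D_{t^{k+1}}-D_t\in\cL'$, the $\cL'$-linearity of $q$, and $D_tv=\mu v$ on $M'_\mu$, to obtain $q(D_{t^{k+1}}v)=(D_{t^{k+1}}-D_t)q(v)+\mu\,q(v)$, which — together with the extra $2k\,q(v)$ produced by $\Phi$ — is precisely the $I=\emptyset$ line of (\ref{action1}).

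The last step, and the part I expect to be the main obstacle, is bijectivity of $\psi$. Since both sides are weight modules with the same grading group, this reduces to showing that $q$ restricts to a linear isomorphism $M'_\mu\to M'/(t-1)M'$ for every weight $\mu$ occurring in $\fW^{(a)}(\widetilde M)$. Using the free graded $\bC[t^{\pm1}]$-module structure on $M'$ one identifies $M'/(t-1)M'$ with a homogeneous piece $M'_{\mu_0}$ by specializing $t\mapsto 1$, and the composite $M'_\mu\hookrightarrow M'\twoheadrightarrow M'/(t-1)M'$ becomes the ``translate to the reference weight'' map. Its injectivity is a short finite-support argument using that $t$ is injective and shifts weights by $2$; its surjectivity is the genuine content, as it is exactly the point where one must know that $M'$ is spread over the whole grading group seen by $\fW^{(a)}(\widetilde M)$, which rests on the jet-module hypotheses (invertibility of $t$ and boundedness of $M'$) together with the computation of $\Supp\bigl(\fW^{(a)}(\widetilde M)\bigr)$. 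Once this is established, $\psi$ is the required isomorphism of $\cL\ltimes\cA$-modules.
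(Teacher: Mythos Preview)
Your argument is correct and follows the paper's approach exactly: both define the same weight-by-weight map $\psi\colon v\mapsto \bar v\otimes t^{a+l}$ and check intertwining generator by generator, using that on $M'$ the twist $\Phi$ alters only the $D_{t^{k+1}}$-action. The paper's proof in fact stops after the homomorphism verification and does not spell out bijectivity at all, so your discussion of that point already goes beyond what the paper writes.
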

\begin{proof}
Since $M'$ is a submodule of the weight $\cL\ltimes\cA$ module $M$, $M'$ is also a weight module. Consider the map $\psi: (M')^\Phi\to\fW^{(a)}(\widetilde{M}); v\mapsto\overline{v}\otimes t^{a+l}$, where $v\in(M')_{a+l}$. Then lemma follows from the following computations: for $k\in\bZ, I,J\subseteq\overline{1,N}, |J|>1, v\in(M')_{a+l}$,
\begin{align*}
\psi(t^k\xi_I.v)&=\delta_{|I|,0}\overline{t^kv}\otimes t^{a+l+2k}=\delta_{|I|,0}\overline{v}\otimes t^{a+l+2k}=t^k\xi_I\cdot(\overline{v}\otimes t^{a+l})=t^k\xi_I\cdot\psi(v),\\
\psi(D_{t^{k+1}\xi_J}.v)&=\overline{D_{t^{k+1}\xi_j}v}\otimes t^{a+l+2k+\sum\limits_{j\in J}(1-\epsilon_j)}=D_{t^{k+1}\xi_J}\cdot\psi(v),\\
\psi(D_{t^{k+1}}.v)&=\overline{(D_{t^{k+1}}+2kt^k)v}\otimes t^{a+l+2k}=\overline{(D_{t^{k+1}}-D_t)v+(a+l+2k)v}\otimes t^{a+l+2k}\\
&=D_{t^{k+1}}\cdot\psi(v).\qedhere
\end{align*}
\end{proof}

Now let us consider the induced module $\mathrm{Ind}_{\cL\ltimes\cA}^{\widetilde{\cK}(N;\epsilon)}\widetilde{M}$. Clearly, we have
\[
\mathrm{Ind}_{\cL\ltimes\cA}^{\widetilde{\cK}(N;\epsilon)}\widetilde{M}=\mathrm{Ind}_{\cL\ltimes\cA}^{\widetilde{\cK}(N;\epsilon)}\mathrm{Ind}_{\cL'\ltimes\cA}^{\cL\ltimes\cA}M'/(t-1)M'=\mathrm{Ind}_{\cK_0\ltimes\cA}^{\widetilde{\cK}(N;\epsilon)}\mathrm{Ind}_{\cL'\ltimes\cA}^{\cK_0\ltimes\cA}M'/(t-1)M'.
\]
Denote $\mathrm{Ind}_{\cL'\ltimes\cA}^{\cK_0\ltimes\cA}M'/(t-1)M'$ by $\cM$. Then by induction we have
\[t^k\xi_Iv=\delta_{|I|,0}v, \forall v\in\cM, k\in\bZ,I\subseteq\overline{1,N}.\]
Indeed, for the induction step, for any $v=(t-1)D_{t^l\xi_j}u_1\cdots u_r\bar{w}\in\cM$ with $u_i\in\sum\limits_{k\in\bZ,j\in\overline{1,N}}\bC (t-1)D_{t^k\xi_j}$, we have
\begin{align*}
t^k\xi_Iv&=[t^k\xi_I,(t-1)D_{t^l\xi_j}]u_1\cdots u_r\bar{w}+(-1)^{|I|}(t-1)D_{t^l\xi_j}t^k\xi_Iu_1\cdots u_r\bar{w}\\
&=\delta_{I,\{j\}}(-t^{l+k+1-\epsilon_j}+t^{l+k-\epsilon_j})u_1\cdots u_r\bar{w}+\delta_{|I|,0}(t-1)D_{t^l\xi_j}u_1\cdots u_r\bar{w}\\
&=\delta_{|I|,0}(t-1)D_{t^l\xi_j}u_1\cdots u_r\bar{w}.
\end{align*}

Denote
\[\mathbb{M}:=\mathrm{Ind}_{\cK_0\ltimes\cA}^{\widetilde{\cK}(N;\epsilon)}\cM=\bC[D_t]\otimes(\oplus_{I\subseteq\overline{1,N}}\mathbf{D}_I \cM),\]
here for $I=\{i_1,\cdots,i_s\}\subseteq\overline{1,N}$ with $i_1<\cdots<i_s$, denote by $\mathbf{D}_I:=D_{t\xi_{i_1}}D_{t\xi_{i_2}}\cdots D_{t\xi_{i_s}}$. Note that $\bM$ is $\bC[D_t]$ free. We have a natural vector spaces isomorphism $\bM/(D_t-a-j)\bM\to \oplus_{I\subseteq\overline{1,N}}\mathbf{D}_I \cM$. Applying the weighting functor, via identify $\oplus_{I\subseteq\overline{1,N}}\mathbf{D}_I \cM$ with $\bM/(D_t-a-j)\bM$, we have
\[
\fW^{(a)}(\bM)=(\oplus_{I\subseteq\overline{1,N}}\mathbf{D}_I\cM)\otimes t^a\bC[t^{1+\delta_{\epsilon,\one}},t^{-1-\delta_{\epsilon,\one}}],
\]
with the actions
\begin{align}
t^k\xi_I.\mathbf{D}_Jv\otimes t^{a+l}&=\delta_{|I\setminus J|,0}(-1)^{\tau(I,J\setminus I)+\frac{|I|(|I|+1)}{2}}\mathbf{D}_{J\setminus I}v\otimes t^{a+l+2k+\sum\limits_{i\in I}(1-\epsilon_i)},\\
D_{t^{k+1}\xi_I}.v\otimes t^{a+l}&=\begin{cases}
((D_{t^{k+1}}-D_t)v+(a+l+2k)v)\otimes t^{a+l+2k}, & I=\emptyset,\\
((D_{t^{k+1}\xi_i}-D_{t\xi_i})v+D_{t\xi_i}v)\otimes t^{a+l+2k+1-\epsilon_i}, & I=\{i\},\\
D_{t^{k+1}\xi_I}v\otimes t^{a+l+2k+\sum\limits_{i\in I}(1-\epsilon_i)}, & |I|>1;
\end{cases}\\
D_{t^{k+1}\xi_I}.\mathbf{D}_J\bar{v}\otimes t^{a+l}&=[D_{t^{k+1}\xi_I},D_{t\xi_{j_1}}]\mathbf{D}_{J\setminus \{j_1\}}\overline{v}\otimes t^{a+l-(1-\epsilon_{j_1})}\nonumber\\
&+(-1)^{|I|}D_{t\xi_{j_1}}D_{t^{k+1}\xi_I}\mathbf{D}_{J\setminus\{j_1\}}\overline{v}\otimes t^{a+l-(1-\epsilon_{j_1})}.
\end{align}

It is easy to check that $\fW^{(a)}(\bM)$ is isomorphic to $\mathrm{Ind}_{\cL\ltimes\cA}^{\widetilde{\cK}(N;\epsilon)}\fW^{(a)}(\widetilde{M})$ via mapping $\overline{u_1\cdots u_r v}\otimes t^{a+l}$ to $u_1\cdots u_r(v\otimes t^{a+l-i_1-\cdots-i_r})$, where $u_j\in\{D_{t^k\xi_i}\,|\,k\in\bZ, i\in\overline{1,N}\}\cap\widetilde{\cK}(N;\epsilon)_{i_j}$. And hence, from Lemma \ref{modisom1}, we have $\fW^{(a)}(\bM)\cong\mathrm{Ind}_{\cL\ltimes\cA}^{\widetilde{\cK}(N;\epsilon)}(M')^\Phi$.

Thus, we have $\fW^{(a)}(\mathrm{Ind}_{\cL\ltimes\cA}^{\widetilde{\cK}(N;\epsilon)}\widetilde{M})\cong\mathrm{Ind}_{\cL\ltimes\cA}^{\widetilde{\cK}}(M')^\Phi$. Therefore, from the universal property of the functor $\mathrm{Ind}$, we get
\begin{proposition}
Let $a\in\bC$ and $M\in\sJ_a(\cK(N;\epsilon),\cA)$ be nonzero and simple. Then $M^\Phi$ is isomorphic to a simple quotient of $\fW^{(a)}(\bM)$.
\end{proposition}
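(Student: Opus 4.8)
The plan is to feed the inclusion $M'\hookrightarrow M$ into the universal property of induction and then transport the conclusion through the identification $\fW^{(a)}(\bM)\cong\mathrm{Ind}_{\cL\ltimes\cA}^{\widetilde{\cK}(N;\epsilon)}(M')^\Phi$ obtained just above.

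First I would record that $M^\Phi$ is again a nonzero simple object of $\sJ_a(\cK(N;\epsilon),\cA)$. Indeed, by Lemma \ref{algisom} the automorphism $\Phi$ of $\widetilde{\cK}(N;\epsilon)$ fixes every $t^k\xi_I$ and fixes $D_t$ (take $k=0,\ I=\emptyset$), so twisting by $\Phi$ preserves the lattice of submodules, the associativity of the $\cA$-action, and the $D_t$-weight decomposition; hence $M^\Phi$ is simple, quasifinite, and supported on $a+(1+\delta_{\epsilon,\one})\bZ$. Since $M'$ was shown to be an $\cL\ltimes\cA$-submodule of $M$ and $\Phi|_{\cL\ltimes\cA}$ is an automorphism of $\cL\ltimes\cA$, the inclusion $M'\hookrightarrow M$ twists to an inclusion $\iota\colon (M')^\Phi\hookrightarrow (M^\Phi)|_{\cL\ltimes\cA}$. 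By the universal property of $\mathrm{Ind}_{\cL\ltimes\cA}^{\widetilde{\cK}(N;\epsilon)}$, $\iota$ extends to a $\widetilde{\cK}(N;\epsilon)$-module homomorphism
\[
\bar\iota\colon\ \mathrm{Ind}_{\cL\ltimes\cA}^{\widetilde{\cK}(N;\epsilon)}(M')^\Phi\ \longrightarrow\ M^\Phi .
\]

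It then remains only to see that $\bar\iota$ is surjective. For this I would first check $M'\neq0$: each $\xi_i$ acts on $M$ as a square-zero operator and the $\xi_i$ supercommute, so $\ker(\xi_1|_M)\supseteq\mathrm{im}(\xi_1|_M)$ is nonzero, $\xi_2$ preserves it, $\ker(\xi_2|_{\ker\xi_1})$ is nonzero, and after $N$ such steps $M'=\bigcap_i\ker(\xi_i|_M)\neq0$. Hence the image of $\bar\iota$ contains $(M')^\Phi\neq0$ and, being a submodule of the simple module $M^\Phi$, equals $M^\Phi$. Composing $\bar\iota$ with the isomorphism $\fW^{(a)}(\bM)\cong\mathrm{Ind}_{\cL\ltimes\cA}^{\widetilde{\cK}(N;\epsilon)}(M')^\Phi$ realizes $M^\Phi$ as a quotient of $\fW^{(a)}(\bM)$; since $M^\Phi$ is simple, it is a simple quotient, which is the assertion. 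All the real work — the weighting-functor computations, Lemma \ref{modisom1}, and the description of $\fW^{(a)}(\bM)$ — is already in place, so the only points needing care are the non-vanishing $M'\neq0$ and the verification that $\Phi$ respects all the relevant structures; beyond that there is no genuine obstacle.
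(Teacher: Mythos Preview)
Your proof is correct and follows the same route the paper takes: the paper's argument for this proposition is simply the line ``from the universal property of the functor $\mathrm{Ind}$'', applied to the identification $\fW^{(a)}(\bM)\cong\mathrm{Ind}_{\cL\ltimes\cA}^{\widetilde{\cK}(N;\epsilon)}(M')^\Phi$ established just before. You have fleshed out details the paper leaves implicit --- notably the verification that $M'\ne 0$ (via nilpotency and supercommutativity of the $\xi_i$) and the check that $\Phi$ fixes $D_t$ and $\cA$ so that twisting preserves simplicity and the jet-module structure --- but the underlying strategy is identical.
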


Now we can prove Theorem \ref{jetmodule}
\begin{proof}[Proof of Theorem \ref{jetmodule}] Denote by the epimorphism from $\fW^{(a)}(\bM)$ to $M^\Phi$ by $\phi$. Then viewed $M$ as a $\cW(0)\ltimes \bC[t^{\pm1}]$ module, from Lemma 4.4 in \cite{CLW}, we know that there exists $\ell\in\bN$ such that $\sum\limits_{i=0}^\ell(-1)^i\binom{m}{i}t^{p-i}\cdot D_{t^{k+i}}\cdot  M=0$ for all $k,p\in\bZ$. Hence, on $M$, for all $p.k\in\bZ$, we have
\begin{align*}
&\sum\limits_{i=0}^\ell(-1)^i\binom{\ell}{i}D_{t^{k-i}}\cdot t^{p+i}=\sum\limits_{i=0}^\ell(-1)^i\binom{\ell}{i}([D_{t^{k-i}},t^{p+i}]+t^{p+i}\cdot D_{t^{k-i}})\\
=&2\sum\limits_{i=0}^\ell(-1)^i\binom{\ell}{i}(p+i)t^{k+p-1}+\sum\limits_{i=0}^\ell(-1)^{\ell-i}\binom{\ell}{i}t^{p+\ell-i}\cdot D_{t^{k-\ell+i}}=0.
\end{align*}
Therefore, for all $v\in\cM$ and $k\in\bZ$,
\begin{align*}
&\phi\Big((\sum\limits_{i=0}^\ell(-1)^i\binom{\ell}{i}D_{t^{k-i}}v)\otimes t^{a+l}\Big)=\phi\Big(\sum\limits_{i=0}^\ell(-1)^i\binom{\ell}{i}D_{t^{k-i}}\cdot(v\otimes t^{a+l-2k+2i})\Big)\\
=&\sum\limits_{i=0}^\ell(-1)^i\binom{\ell}{i}D_{t^{k-i}}\cdot\phi\Big(v\otimes t^{a+l-2k+2i}\Big)=\sum\limits_{i=0}^\ell(-1)^i\binom{\ell}{i}D_{t^{k-i}}\cdot\phi\Big(t^{-k+i}\cdot(v\otimes t^{a+l})\Big)\\
=&\sum\limits_{i=0}^\ell(-1)^i\binom{\ell}{i}D_{t^{k-i}}\cdot t^{-k+i}\cdot\phi\Big(v\otimes t^{a+l}\Big)=0.
\end{align*}
This means $((t-1)^\ell\cW(0)\cM)\otimes\bC[t^{1+\delta_{\epsilon,\one}},t^{-1-\delta_{\epsilon,\one}}]\subseteq\ker\phi$. Then by Lemma \ref{ideal}, $\cK_{\ell+1}\cM\otimes\bC[t^{1+\delta_{\epsilon,\one}},t^{-1-\delta_{\epsilon,\one}}]\subseteq\ker\phi$, and hence \[\fW^{(a)}(\mathrm{Ind}_{\cK_0\ltimes\cA}^{\widetilde{\cK}(N;\epsilon)}\cK_{\ell+1}\cM)=\oplus_{I\subseteq\overline{1,N}}\mathbf{D}_I\cK_{\ell+1}\cM\otimes\bC[t^{1+\delta_{\epsilon,\one}},t^{-1-\delta_{\epsilon,\one}}]\subseteq\ker\phi.\] Thus, $M$ is a simple quotient of $\fW^{(a)}(\mathrm{Ind}_{\cK_0\ltimes\cA}^{\widetilde{\cK}(N;\epsilon)}V)$
for some finite dimensional $\cK_0\ltimes\cA$ module $V$, on which $t^k\xi_I$ acts as $\delta_{|I|,0}\mathrm{id}$. Moreover, we may require $V$ to be simple. Indeed, since $V$ is finite dimensional, we have a composition series for $V$ as follows
 \[
 0=V_0\subset V_1\subset\cdots\subset V_k=V
 \]
 with $V_i/V_{i-1}$ being simple $\cK_0\ltimes\cA$ modules. Then we have the following composition series for $\fW^{(a)}(\mathrm{Ind}_{\cK_0\ltimes\cA}^{\widetilde{\cK}(N;\epsilon)}V)$:
\[
 0=\fW^{(a)}(\mathrm{Ind}_{\cK_0\ltimes\cA}^{\widetilde{\cK}(N;\epsilon)}V_0)\subseteq\fW^{(a)}(\mathrm{Ind}_{\cK_0\ltimes\cA}^{\widetilde{\cK}(N;\epsilon)}V_1)\subseteq\cdots\subseteq\fW^{(a)}(\mathrm{Ind}_{\cK_0\ltimes\cA}^{\widetilde{\cK}(N;\epsilon)}V_k). \]
Let $s$ be the minimal integer such that $\phi(\fW^{(a)}(\mathrm{Ind}_{\cK_0\ltimes\cA}^{\widetilde{\cK}(N;\epsilon)}V_s))\ne0$. Then $\phi(\fW^{(a)}(\mathrm{Ind}_{\cK_0\ltimes\cA}^{\widetilde{\cK}(N;\epsilon)}V_s/V_{s-1}))=M$. By Lemma \ref{cat-so}, $V$ is naturally a $\bC z\oplus\fs\fo_N$ module.
\end{proof}

\section{Main results}
\label{Mainresults}

In this section, we will determine all simple objects in $\sF(\widehat{\cK}(N;\epsilon))$. First of all, from the representation theory of Virasoro algebra, we know that, for $N\le 3$, $C$ acts trivially on any simple module in $\sB(\widehat{\cK}(N;\epsilon))$, and hence $\sB(\cK(N;\epsilon))$ is naturally equivalent to $\sB(\widehat{\cK}(N;\epsilon))$.

Lemma \ref{roughclassification} gives a rough classification of simple quasifinite modules over $\widehat{\cK}(N;\epsilon)$.
\begin{lemma}\label{roughclassification}
Any simple module in $\sF(\widehat{\cK}(N;\epsilon))\setminus\sB(\widehat{\cK}(N;\epsilon))$ is a highest/lowest weight module.
\end{lemma}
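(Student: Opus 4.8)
The plan is to show that a simple quasifinite module $M$ over $\widehat{\cK}(N;\epsilon)$ which is \emph{not} bounded must be a highest or lowest weight module with respect to the natural $\bZ$- (or $2\bZ$-) grading of $\widehat{\cK}(N;\epsilon)$ by $D_t$-eigenvalue. The standard strategy for such ``Harish-Chandra'' dichotomies is due to Mathieu: decompose $\widehat{\cK}(N;\epsilon)=\widehat{\cK}^-\oplus\widehat{\cK}^0\oplus\widehat{\cK}^+$ where $\widehat{\cK}^\pm$ are spanned by the $D_{t^k\xi_I}$ (and $C$) of strictly positive/negative $D_t$-weight and $\widehat{\cK}^0$ is the zero-weight part together with $C$. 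Since $M$ is simple and quasifinite with support in a single coset $\lambda+(1+\delta_{\epsilon,\one})\bZ$, it suffices to prove that either $\widehat{\cK}^+$ or $\widehat{\cK}^-$ acts locally nilpotently on $M$; then a standard argument (the set of vectors killed by a power of $\widehat{\cK}^+$ is a nonzero submodule, hence all of $M$, and a top weight exists by quasifiniteness) produces the highest/lowest weight structure.

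First I would reduce to a statement about the Witt subalgebra. The subalgebra spanned by $\{D_{t^{k+1}}\mid k\in\bZ\}$ together with $C$ is the Virasoro algebra $\Vir$; restricting $M$ to $\Vir$ gives a quasifinite (not necessarily simple) $\Vir$-module. By Mathieu's theorem for the Virasoro algebra, each of its simple subquotients is either a bounded (cuspidal) module or a highest/lowest weight module. The key dichotomy to establish is: if $M$ is not bounded as a $\widehat{\cK}(N;\epsilon)$-module, then it is not bounded as a $\Vir$-module either, so some $\Vir$-subquotient is highest or lowest weight; say highest weight, so there is a weight $\mu\in\Supp(M)$ and a vector annihilated by $D_{t^{k+1}}$ for all $k\gg 0$. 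I would then bootstrap from the Virasoro generators to the full $\widehat{\cK}(N;\epsilon)^+$: using the bracket relations \eqref{bracket1}, the elements $D_{t^{k+1}\xi_I}$ for large $k$ are obtained by bracketing $D_{t^{m+1}}$ (large $m$) against fixed low-degree generators $D_{t\xi_I}$, $D_{\xi_I}$, so local nilpotence of the positive Virasoro part on a suitable vector propagates to local nilpotence of all of $\widehat{\cK}(N;\epsilon)^+$. Running the usual Mathieu submodule argument then shows $M$ itself is a highest weight module.

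More precisely, the heart of the argument is the following trichotomy for $M$ restricted to $\Vir$: (i) $M$ is bounded over $\Vir$ — but then, since the positive-degree part of $\widehat{\cK}(N;\epsilon)$ is generated (as a Lie algebra over the Cartan $\widehat{\cK}^0$) by finitely many elements obtained from Virasoro elements and fixed odd generators, one shows the weight multiplicities of $M$ over $\widehat{\cK}(N;\epsilon)$ are bounded, i.e. $M\in\sB(\widehat{\cK}(N;\epsilon))$, contradicting the hypothesis; or (ii)/(iii) some simple $\Vir$-subquotient is highest (resp.\ lowest) weight. In case (ii) I pick a nonzero $v\in M_\mu$ with $D_{t^{k+1}}v=0$ for $k\geq K$; enlarging $K$ and using that $[D_{t^{m+1}},D_{t\xi_I}]$ and $[D_{t^{m+1}},D_{\xi_I}]$ are scalar multiples of $D_{t^{m+1}\xi_I}$ and $D_{t^{m}\xi_I}$ respectively, together with $D_{t^{m+1}\xi_i\xi_j}$ handled similarly, I get that $D_{t^{n+1}\xi_I}$ kills $v$ for all $I$ and all $n$ sufficiently large; also $C$ acts as a scalar. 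Hence the subspace $\{w\in M: \widehat{\cK}^+_{\geq p}\, w=0\text{ for some }p\}$ is nonzero; it is a $\widehat{\cK}^0$-submodule closed under $\widehat{\cK}^-$ in the sense needed, and one checks it is a $\widehat{\cK}(N;\epsilon)$-submodule, so equals $M$. Then $\Supp(M)$ is bounded above, so it has a maximal weight, whose weight space is a simple $\widehat{\cK}^0$-module annihilated by $\widehat{\cK}^+$, exhibiting $M$ as a highest weight module. Case (iii) is symmetric.

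The main obstacle I anticipate is case (i): ruling out the possibility that $M$ is bounded over $\Vir$ but unbounded over $\widehat{\cK}(N;\epsilon)$. A priori the ``extra'' odd directions could inflate multiplicities without bound even when the Virasoro multiplicities stay bounded. To close this I would argue that on a bounded $\Vir$-module the operators $D_{t\xi_I}$, $D_{\xi_I}$ (finitely many, fixed weight) together with the bounded family of Virasoro operators generate, under the relations \eqref{bracket1}, enough of $\widehat{\cK}(N;\epsilon)$ to express every weight space of $M$ as the image of a fixed finite-dimensional space under a bounded set of operators — concretely, using Lemma~\ref{omegaoper}-type identities and the ``spreading'' trick (bracketing with high powers of $D_{t^{k+1}}$), one shows each weight space has dimension bounded by a constant times the $\Vir$-multiplicity bound. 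This is exactly the kind of reduction carried out for $N=1$ in \cite{CL,CLL} and I would adapt that mechanism here, uniformly in $N\neq 4$.
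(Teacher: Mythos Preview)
Your overall strategy is in the right spirit, but there are two issues, one a misidentified obstacle and one a genuine gap.

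First, your ``main obstacle'' in case (i) is not an obstacle at all. The weight spaces $M_\mu$ are the $D_t$-eigenspaces, and $D_t$ lies in the Virasoro subalgebra; so $M$ is bounded over $\Vir$ if and only if it is bounded over $\widehat{\cK}(N;\epsilon)$. Thus case (i) is vacuous by hypothesis, and the paragraph you devote to spreading arguments and Lemma~\ref{omegaoper} is unnecessary here.

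Second, and more seriously, your case (ii)/(iii) has a gap. Mathieu's theorem classifies \emph{simple} quasifinite $\Vir$-modules, but $M|_{\Vir}$ is typically not simple. Knowing that some simple $\Vir$-subquotient $M_2/M_1$ is highest weight only tells you that its highest weight vector $\bar v$ satisfies $D_{t^{k+1}}\bar v=0$ for $k>0$; lifting $\bar v$ to $v\in M_2\subseteq M$ gives $D_{t^{k+1}}v\in M_1$, not $D_{t^{k+1}}v=0$. So the sentence ``I pick a nonzero $v\in M_\mu$ with $D_{t^{k+1}}v=0$ for $k\ge K$'' does not follow from what precedes it. Nor is it clear, without further argument, that an unbounded quasifinite $\Vir$-module must have a highest or lowest weight simple subquotient at all; a priori it could be built from infinitely many cuspidal pieces.

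The paper avoids both issues by a direct pigeonhole argument with the \emph{full} algebra rather than passing through $\Vir$. Since $M$ is unbounded, one can choose a weight space $M_{\lambda-2k}$ (or $M_{\lambda-2k-1}$) whose dimension exceeds $2^N\sum_{i=0}^{N+1}\dim M_{\lambda+i}$, i.e.\ exceeds the total dimension of the targets of the finitely many operators $D_{t^{k+1}\xi_I}$, $D_{t^{k+2}\xi_I}$ ($I\subseteq\overline{1,N}$). This yields a nonzero $w$ with $D_{t^{k+1}\xi_I}w=D_{t^{k+2}\xi_I}w=0$ for all $I$, and then the bracket relations give $D_{t^{i+1}\xi_I}w=0$ for all $I$ and all sufficiently large $i$. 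From there your submodule argument and Lemma~\ref{upperbounded} finish exactly as you describe. Your bootstrap step (from Virasoro annihilation to full annihilation via brackets) is correct and is essentially what the paper does in reverse order; the difference is that the paper gets the initial vector by a dimension count rather than by invoking classification results for subquotients.
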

This lemma was given in \cite{MZe1} (Proposition 1) without proof. Here, for self consistency, we provide a proof. To prove Lemma \ref{roughclassification}, we need the following well-known result for Virasoro algebra.

\begin{lemma}\label{upperbounded}
Let $M\in\sF(\Vir)$ with $\Supp(M)\subseteq \lambda+2\bZ$. If for any $v\in M$, there exists $N(v)\in\bN$ such that $D_{t^{i+1}}v=0$ for any $i\geq N(v)$, then $\Supp(M)$ is upper bounded.
\end{lemma}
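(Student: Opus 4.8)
\textbf{Proof plan for Lemma \ref{upperbounded}.}

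The plan is to argue by contradiction: assume $\Supp(M)$ is not upper bounded, so there are infinitely many $n\in\lambda+2\bZ$ with $M_n\neq 0$. Since $M$ is quasifinite, the natural strategy is to find a single vector $v\in M$ whose image under arbitrarily large $D_{t^{i+1}}$ is nonzero, contradicting the hypothesis. First I would fix a weight $\mu$ with $M_\mu\neq 0$ and pick a nonzero $v\in M_\mu$. The key observation is that the operators $D_{t^{i+1}}$ raise weight by $2i$, so $D_{t^{i+1}}v\in M_{\mu+2i}$, and the hypothesis says all but finitely many of these vanish. The difficulty is that this could happen for trivial reasons (e.g. $v$ happens to lie in the kernel), so I must exploit simplicity or at least the abundance of nonzero weight spaces to propagate.

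The main step is a dimension/interpolation argument in the spirit of the Mathieu-type uniform bounds. For fixed large $N_0$, consider the finite-dimensional space $U=\bigoplus_{i=0}^{N_0} M_{\mu+2i}$ (finite by quasifiniteness) and the action map. If $\Supp(M)$ were unbounded above but the hypothesis held, one shows that the operators $\{D_{t^{i+1}}\mid i\geq 0\}$ acting on any fixed weight space are "locally nilpotent going up," and then a Vandermonde/interpolation argument on the operators $D_{t^{i+1}}$ (using that, modulo lower-order corrections, $\sum_i c_i D_{t^{i+1}}$ for suitable coefficients reconstructs any single $D_{t^{j+1}}$ with $j$ large from a bounded window) forces the support to in fact be bounded. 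Concretely, I expect to use the relation $[D_{t^{i+1}},D_{t^{j+1}}]=(j-i)D_{t^{i+j+1}}$ to show that if $D_{t^{p+1}}v=0$ for all $p\geq N(v)$ and for all $v$, then repeatedly bracketing produces $D_{t^{q+1}}w=0$ for a controlled range, and finally $M_{\mu+2i}=0$ for all $i$ larger than some bound independent of the chosen $v$.

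The cleanest route is probably: let $K=\max\{N(v_j)\}$ over a basis $v_1,\dots,v_d$ of the (finitely many, finite-dimensional) weight spaces in a window of length $2K$ above $\mu$; then for $i\geq K$ every $D_{t^{i+1}}$ kills that whole window, and using $D_{t^{i+1}} = \frac{1}{i-1}[D_{t^{2}},D_{t^{i}}]$ (valid for $i\neq 1$) together with $D_{t^2}$ raising weight by $2$ one bootstraps to show $D_{t^{i+1}}$ kills \emph{all} of $M$ beyond that window, hence $M_{\mu+2i}=0$ for $i$ large, i.e.\ $\Supp(M)$ is upper bounded. The main obstacle I anticipate is making the "window" argument genuinely uniform — ensuring the bound $K$ chosen from finitely many vectors actually controls the action on weight spaces far above $\mu$; this requires carefully using that $\Supp(M)\subseteq\lambda+2\bZ$ is a single coset (so there are no gaps to jump) and that each intermediate weight space is finite-dimensional, so that vanishing on a spanning set implies vanishing on the space. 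Once that uniformity is in place, the contradiction with unboundedness is immediate.
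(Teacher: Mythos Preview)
The paper does not prove this lemma; it is cited as a ``well-known result for the Virasoro algebra'' and stated without argument, so there is no paper's proof to compare against. That said, your sketch has two concrete gaps.

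First, the commutator you propose runs in the wrong direction. Writing $D_{t^{i+1}}=c\,[D_{t^2},D_{t^i}]$ and applying it to $w\in M_\nu$ with $\nu$ just above your window yields $D_{t^2}D_{t^i}w-D_{t^i}D_{t^2}w$, and both $D_{t^i}w$ and $D_{t^2}w$ lie in weight spaces \emph{strictly higher} than $\nu$, where you have established nothing yet; the bootstrap is circular. The identity you actually need uses the \emph{lowering} operator $D_1=D_{t^0}$: from $[D_1,D_{t^{j+2}}]=2(j+2)\,D_{t^{j+1}}$ one gets
\[
D_{t^{j+1}}w=\tfrac{1}{2(j+2)}\bigl(D_1D_{t^{j+2}}w-D_{t^{j+2}}D_1w\bigr),
\]
and now $D_1w$ sits one step below, in the previously handled weight space, so the second term vanishes once $j+1$ meets the current threshold. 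Iterating in $j$ and invoking the hypothesis $D_{t^{j+p+1}}w=0$ for large $p$ kills the first term as well.

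Second, even if you showed $D_{t^{i+1}}M_\nu=0$ for all $i\ge 1$, that alone does not force $M_\nu=0$; your sentence ``hence $M_{\mu+2i}=0$ for $i$ large'' is a non sequitur as written. The corrected induction, however, \emph{lowers} the threshold by one at each step: if $D_{t^{l+1}}M_{\mu_0}=0$ for $l\ge K$, then $D_{t^{l+1}}M_{\mu_0+2s}=0$ for $l\ge K-s$. At $s=K$ this reaches $l=0$, i.e.\ $D_t M_{\mu_0+2K}=0$. Since $D_t$ acts as the scalar $\mu_0+2K$, choosing any $\mu_0\in\lambda+2\bZ$ with $\mu_0>0$ forces $M_{\mu_0+2s}=0$ for all $s\ge K$, so $\Supp(M)$ is bounded above by $\mu_0+2K-2$. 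The ``uniformity'' you flagged as the main obstacle is in fact automatic from finite-dimensionality of a single weight space; the real issue was the direction of the bracket.
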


\begin{proof}[Proof of Lemma \ref{roughclassification}]
Let $M\in\sF(\widehat{\cK}(N;\epsilon))\setminus\sB(\widehat{\cK}(N;\epsilon))$ be simple and $\lambda\in\Supp(M)$. We claim that there exists $k\in\bZ$ and a nonzero element $w$ in $M_{\lambda-2k}$ or $M_{\lambda-2k-1}$ such that $D_{t^{k+1}\xi_I}w=D_{t^{k+2}\xi_I}w=0$ for all $I\subseteq\overline{1,N}$. In fact, since $M$ is not bounded, there exists $k$ such that
    \[
    \dim M_{-2k+\lambda}>2^N\sum\limits_{i=0}^{N+1}\dim M_{\lambda+i}
    \]
    or
    \[
    \dim M_{-2k-1+\lambda}>2^N\sum\limits_{i=0}^{N+1}\dim M_{\lambda-1+i},
    \]
    and therefore claim follows. Thus, $D_{t^{i+1}\xi_I}w=0$ for all $i>(k+1)^2$ and $I\subseteq\overline{1,N}$.

    Without loss of generality, we may assume $k\in\bN$. Since $M'=\{v\in M\,|\,D_{t^{i+1}\xi_I}v=0, \forall I\subseteq\overline{1,N}, i\gg0\}$ is a nonzero submodule of $M$, we know $M=M'$. And hence by Lemma \ref{upperbounded}, $\Supp(M)$ is upper bounded, that is $M$ is a highest weight module.
\end{proof}

Let $M\in\sB(\cK(N;\epsilon))$. Consider $\cK(N;\epsilon)$ as the adjoint $\cK(N;\epsilon)$ module. Then the tensor product $\cK(N;\epsilon)\otimes M$ becomes an $\cA\cK(N;\epsilon)$ module under
\[
x\cdot(y\otimes u)=(xy)\otimes u, \forall x\in\cA, y\in\cK(N;\epsilon), u\in M.
\]
Denote $\mathfrak{K}(M)=\{\sum\limits_{i}x_i\otimes v_i\in\cK(N;\epsilon)\otimes M\,|\,\sum\limits_i(ax_i)v_i=0,\forall a\in\cA\}$. Then it is easy to see that $\mathfrak{K}(M)$ is an $\cA\cK(N;\epsilon)$ submodule of $\cK(N;\epsilon)\otimes M$. And hence we have the $\cA\cK(N;\epsilon)$ module $\widehat{M}=(\cK(N;\epsilon)\otimes M)/\mathfrak{K}(M)$. Also, we have a $\cK(N;\epsilon)$ module epimorphism defined by
\[
\pi: \widehat{M}\to\cK(N;\epsilon) M; x\otimes y+\mathfrak{K}(M)\mapsto xy, \forall x\in\cK(N;\epsilon), y\in M.
\]
$\widehat{M}$ is called the $\cA$-cover of $M$ if $\cK(N;\epsilon) M=M$.

\begin{lemma}\label{cover}
Let $M\in\sB(K(N;\epsilon))$ be simple. Then $\widehat{M}\in\sJ(\cK(N;\epsilon),\cA)$.
\end{lemma}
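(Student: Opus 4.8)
The plan is to show that $\widehat M$ is a quasifinite $\cA\cK(N;\epsilon)$ module, i.e.\ that it lies in $\sJ(\cK(N;\epsilon),\cA)$. Since $\widehat M$ is manifestly a $\widetilde{\cK}(N;\epsilon)$ module with $\cA$ acting associatively, what remains is to check that $\widehat M\in\sW(\widetilde\cK(N;\epsilon))$ and that its weight spaces are finite dimensional. For the weight grading: $\cK(N;\epsilon)$ is $\bZ$-graded by the eigenvalue of $\ad D_t$ (equivalently, $\deg D_{t^k\xi_I}=2k+\sum_{i\in I}(1-\epsilon_i)$ up to normalization, so that $D_t$ acts on the basis with integer eigenvalues), and $M$ is a weight module by hypothesis; hence $\cK(N;\epsilon)\otimes M$ is a weight module for $D_t$ acting as $\ad D_t\otimes 1+1\otimes D_t$, the submodule $\mathfrak K(M)$ is a weight submodule because it is cut out by the $\cA$-action which respects the grading, and so $\widehat M$ is a weight module. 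Thus $\widehat M\in\sW(\widetilde\cK(N;\epsilon))$.

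The real content is the finite-dimensionality of the weight spaces $\widehat M_\mu$. First I would fix $\mu$ and analyze a spanning set. Every weight vector of $\cK(N;\epsilon)\otimes M$ of weight $\mu$ is a finite sum of pure tensors $D_{t^k\xi_I}\otimes v$ with $v\in M_{\mu-\deg D_{t^k\xi_I}}$; since $M$ is bounded, say $\dim M_\nu<b$ for all $\nu$, the number of ``free directions'' in $v$ is bounded, so it suffices to bound, modulo $\mathfrak K(M)$, the number of classes $\overline{D_{t^k\xi_I}\otimes v}$ needed as $k$ ranges over $\bZ$ and $I\subseteq\overline{1,N}$ (with $I$ finite in number). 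The key tool is Lemma~\ref{omegaoper}: there is $m\in\bN$ such that $M$ is annihilated by all the operators $\Omega_{m,k,p,I}=\sum_{i=0}^m(-1)^i\binom{m}{i}D_{t^{k-i}\xi_I}D_{t^{p+i}}$. I would use this to produce, inside $\cK(N;\epsilon)\otimes M$, relations that land in $\mathfrak K(M)$: roughly, for fixed $I$ the combination $\sum_{i=0}^m(-1)^i\binom{m}{i}\,D_{t^{k-i}\xi_I}\otimes(D_{t^{p+i}}v)$ should be forced into $\mathfrak K(M)$ because applying any $a\in\cA$ to it and then the $\cK$-action reproduces $\Omega_{m,k-?,p+?,I}$ acting on $M$ after using $a\,D_{t^{k-i}\xi_I}=D_{t^{k-i}\xi_I\cdot a}$-type identities; hence those tensors are dependent mod $\mathfrak K(M)$ in a way that lets one express $\overline{D_{t^k\xi_I}\otimes v}$ for large $|k|$ in terms of finitely many $\overline{D_{t^{k'}\xi_I}\otimes v'}$ with $k'$ in a bounded window and $v'$ ranging over bounded-dimensional weight spaces of $M$.

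Concretely, the step I would carry out carefully is: (i) record that $\{a D_{t^k\xi_I}:a\in\cA\}$ together with the definition of $\mathfrak K(M)$ means $\sum_i x_i\otimes v_i\in\mathfrak K(M)$ iff $\sum_i (D_{a x_i})v_i=0$ for all $a\in\cA$; (ii) apply this with the specific combinations coming from $\Omega_{m,k,p,I}$, choosing $p$ so that $D_{t^{p}}$ contributes the needed shift and letting $a$ run over $t^q$ — using $t^q D_{t^k\xi_I}=D_{t^{k+q}\xi_I}$ — to see that the whole family of relations $\Omega_{m,\bullet,\bullet,I}v=0$ on $M$ translates into membership of a rich family of elements in $\mathfrak K(M)$; (iii) conclude that modulo $\mathfrak K(M)$, for each fixed $I$ and each fixed residue of $k$, the images $\overline{D_{t^k\xi_I}\otimes v}$ with $v$ a weight vector of the appropriate weight are controlled by $m$ consecutive values of $k$, each contributing at most $\dim M_{\nu}<b$ vectors. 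Summing over the finitely many $I$ gives $\dim\widehat M_\mu<\infty$. The main obstacle I anticipate is step (ii)–(iii): bookkeeping the exact way the $\Omega$-relations on $M$ descend to relations in $\widehat M$, in particular making sure the combinations one writes down really do pair to zero against \emph{all} of $\cA$ (not just the even part $\bC[t^{\pm1}]$), which is where the odd generators $\xi_J$ and the bracket formula \eqref{bracket1} enter and where the argument is genuinely superalgebra-flavored rather than a routine Virasoro computation.
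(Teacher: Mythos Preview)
Your approach is essentially the paper's: use Lemma~\ref{omegaoper} to place $\sum_{i=0}^m(-1)^i\binom{m}{i}D_{t^{k-i}\xi_I}\otimes D_{t^{p+i}}v$ in $\mathfrak K(M)$, then (writing $u=D_t v$ when $u$ has nonzero weight, and setting aside $\cK(N;\epsilon)\otimes M_0$) inductively reduce $D_{t^{n}\xi_I}\otimes u$ to tensors with $n$ in a window of length $m$. Your worry about testing against odd $a=t^q\xi_J\in\cA$ is not an obstacle: since $a\,D_{t^{k-i}\xi_I}=\pm D_{t^{q+k-i}\xi_{J\cup I}}$ (or $0$ if $J\cap I\neq\varnothing$), the resulting sum is again $\Omega_{m,q+k,p,J\cup I}$ acting on $v$, and Lemma~\ref{omegaoper} kills it because it holds for \emph{all} index sets.
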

\begin{proof}
It is obvious if $M$ is trivial. Now suppose $M$ is nontrivial. Then $\cK(N;\epsilon)M=M$ and $\Supp(M)\subseteq a+(1+\delta_{\epsilon,\one})\bZ$. Suppose $\dim M_\mu\le r, \forall\mu\in\Supp(M)$. By Lemma \ref{omegaoper}, there exists $m\in\bN$, such that $\Omega_{m,k,p,I}v=0$ for all $k,p\in\bZ, I\subseteq\overline{1,N}$ and $v\in M$. Hence,
\begin{equation}\label{kerner}
\sum\limits_{i=0}^m(-1)^i\binom{m}{i}D_{k-i,I}\otimes D_{p+i,\emptyset}v\in\mathfrak{K}(M).
\end{equation}
Let $S=\mathrm{span}\{D_{k,I}\,|\,0\le k\le m-1, I\subseteq\overline{1,N}\}$. Then $\dim S=2^Nm$ and $S\otimes M$ is a $\bC D_t$ submodule of $\cK(N;\epsilon)\otimes M$ with $\dim(S\otimes M)_\mu\le 2^Nmr, \forall\mu\in a+(1+\delta_{\epsilon,\one})\bZ$. We will prove that $\cK(N;\epsilon)\otimes M=\cK(N;\epsilon)\otimes M_0+S\otimes M+\mathfrak{K}(M)$, from which lemma follows. Indeed, we will prove by induction that for all $u\in M_\mu$ with $\mu\ne0$, $D_{n,I}\otimes u\in S\otimes M+\mathfrak{K}(M)$. We only give proof for $n\ge m$, the proof for $n<0$ is similar. Since $D_t$ acts on $M_\mu$ as a nonzero scalar, we can write $u=D_{1,\emptyset}v$ for some $v\in M_\mu$. Then by (\ref{kerner}) and induction hypothesis, we have
\begin{align*}
D_{n,I}\otimes D_{1,\emptyset}v&=\sum\limits_{i=0}^m(-1)^i\binom{m}{i}D_{n-i,I}\otimes D_{1+i,\emptyset}v-\sum\limits_{i=1}^m(-1)^i\binom{m}{i}D_{n-i,I}\otimes D_{1+i,\emptyset}v\\
&\in S\otimes M+\mathfrak{K}(M).\qedhere
\end{align*}
\end{proof}

Now we can classify all simple module in $\sB(\cK(N;\epsilon))$ for $N\ne4$.
\begin{theorem}\label{bounded}
For $N\neq 4$, any nontrivial simple module in $\sB(\cK(N;\epsilon))$ is a simple quotient of $\cF_a(V,c)$ for some $a,c\in\bC$ and some simple finite dimensional $\fs\fo_N$ module $V$.
\end{theorem}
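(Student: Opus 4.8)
The plan is to invoke the $\cA$-cover machinery developed in Lemma~\ref{cover} together with the classification of simple jet modules in Theorem~\ref{jetmodule}. Let $M\in\sB(\cK(N;\epsilon))$ be a nontrivial simple module. First I would observe that, since $M$ is simple and nontrivial, $\cK(N;\epsilon)M=M$, so the $\cA$-cover $\widehat{M}=(\cK(N;\epsilon)\otimes M)/\fK(M)$ is defined and, by Lemma~\ref{cover}, lies in $\sJ(\cK(N;\epsilon),\cA)$; moreover $\Supp(M)\subseteq a+(1+\delta_{\epsilon,\one})\bZ$ for some $a\in\bC$, so after a shift $\widehat{M}\in\sJ_a(\cK(N;\epsilon),\cA)$. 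The surjection $\pi\colon\widehat{M}\to M$ shows $M$ is a quotient of $\widehat{M}$ as $\cK(N;\epsilon)$-modules.

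Next I would pass to a simple subquotient. Since $\widehat{M}$ is a jet module, it has finite composition length in $\sJ_a(\cK(N;\epsilon),\cA)$ (all weight spaces being finite-dimensional and the support lying in a single coset, with boundedness forcing finite length). Choosing a composition series of $\widehat{M}$, the simple module $M$ — being a nonzero $\cK(N;\epsilon)$-quotient of $\widehat{M}$ — must appear as a $\cK(N;\epsilon)$-subquotient of one of the simple jet subquotients $\widehat{M}_j$; more precisely, picking the minimal term $\widehat{M}_s$ in the series whose image in $M$ is nonzero, the simple jet module $\widehat{M}_s/\widehat{M}_{s-1}$ surjects onto $M$ as $\cK(N;\epsilon)$-modules. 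By Theorem~\ref{jetmodule}, every simple module in $\sJ_a(\cK(N;\epsilon),\cA)$ is a simple quotient of some $\cF_a(V,c)$ with $V$ a simple finite-dimensional $\bC z\oplus\fs\fo_N$ module. Composing, $M$ is a simple $\cK(N;\epsilon)$-quotient of some $\cF_a(V,c)$.

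It remains to remove the $\bC z$ and replace $\fs\fo_N$-module data alone, i.e. to argue that the central element $z$ (equivalently the scalar $c$) can be absorbed: the action of $z=D_t-D_1+\mathfrak{i}$ in $\cK_0/\mathfrak{i}\cong\bC z\oplus\fs\fo_N$ enters $\cF_a(V,c)$ only through the scalar shift $(a+l+2k+ck)$ in the $D_{t^{k+1}}$-action, which can be reabsorbed by twisting $a$ and using the automorphism $\Phi$ of Lemma~\ref{algisom} (or by noting that the weighting-functor construction already records this freedom), so $\cF_a(V,c)$ as a $\cK(N;\epsilon)$-module is, up to the parameter $a$, determined by $V$ as an $\fs\fo_N$-module. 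Thus $M$ is a simple quotient of $\cF_a(V,c)$ for a simple finite-dimensional $\fs\fo_N$-module $V$ and suitable $a,c\in\bC$, as claimed.

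The main obstacle I anticipate is the finite-length/descent step: justifying cleanly that a simple $\cK(N;\epsilon)$-quotient of the jet module $\widehat{M}$ actually descends to a simple jet subquotient — one must check that the $\cK(N;\epsilon)$-module maps interact correctly with the $\cA$-module structure, and that no simple $\cK(N;\epsilon)$-quotient is ``lost'' between the $\cA\cK(N;\epsilon)$-filtration levels. This is handled exactly as in the final paragraph of the proof of Theorem~\ref{jetmodule} (the minimal-$s$ argument with the composition series), and the boundedness of $M$ guarantees the relevant objects stay in $\sJ_a(\cK(N;\epsilon),\cA)$; the rest of the argument is formal.
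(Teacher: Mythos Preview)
Your proposal is correct and follows essentially the same route as the paper: build the $\cA$-cover $\widehat{M}$, use Lemma~\ref{cover} to see it is a (bounded) jet module, take a composition series of $\widehat{M}$ as an $\cA\cK(N;\epsilon)$-module, pick the minimal index $s$ with $\pi(\widehat{M}^{(s)})\neq 0$, and conclude that the simple jet module $\widehat{M}^{(s)}/\widehat{M}^{(s-1)}$ surjects onto $M$ as a $\cK(N;\epsilon)$-module; then invoke Theorem~\ref{jetmodule}. This is exactly the paper's argument.

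Your final paragraph about ``removing $\bC z$'' is unnecessary and the reasoning there is not quite right. A simple finite-dimensional $\bC z\oplus\fs\fo_N$-module is precisely a simple finite-dimensional $\fs\fo_N$-module $V$ together with a scalar $c\in\bC$ (the $z$-eigenvalue); the theorem already lists $c$ as a free parameter, so there is nothing to absorb. In particular, the suggestion that $c$ can be absorbed into $a$ via $\Phi$ is false: in the action $D_{t^{k+1}}\cdot(v\otimes t^{a+l})=(a+l+2k+ck)\,v\otimes t^{a+l+2k}$ the parameter $c$ governs the $k$-coefficient while $a$ only shifts the constant term, so different values of $c$ give genuinely non-isomorphic modules (cf.\ Proposition~\ref{isomtensor}). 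Simply drop that paragraph and the proof is complete.
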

\begin{proof}
Let $M$ be any nontrivial module in $\sB(\cK(N;\epsilon))$. Then $\cK(N;\epsilon)M=M$ and there is an epimorphism $\pi: \widehat{M}\to M$. From Lemma \ref{cover}, $\widehat{M}$ is bounded. Hence, $\widehat{M}$ has a composition series of $\cA\cK(N;\epsilon)$ submodules
\[
0=\widehat{M}^{(0)}\subset\widehat{M}^{(1)}\subset\cdots\subset\widehat{M}^{(s)}=\widehat{M}
\]
with $\widehat{M}^{(i)}/\widehat{M}^{(i-1)}$ being simple $\cA\cK(N;\epsilon)$ modules. Let $k$ be the minimal integer such that $\pi(\widehat{M}^{(k)})\ne0$. Then we have $\pi(\widehat{M}^{(k)})=M,\pi(\widehat{M}^{(k-1)})=0$ since $M$ is simple. So, we have a $\cK(N;\epsilon)$-epimorphism from the simple $\cA\cK(N;\epsilon)$ module $\widehat{M}^{(k)}/\widehat{M}^{(k-1)}$ to $M$. Thus, theorem follows from
\end{proof}

Combining Lemma \ref{roughclassification} and Theorem \ref{bounded}, we ge the following result.
\begin{theorem}\label{mainresult}
For $N\neq 4$, any simple module in $\sF(\widehat{K}(N;\epsilon))$ is a highest weight module, lowest weight module, or a simple quotient of $\cF_a(V,c)$ for some $a,c\in\bC$ and some simple finite dimensional $\fs\fo_N$ module $V$.
\end{theorem}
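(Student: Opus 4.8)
The plan is to combine the two major structural results already established: Lemma~\ref{roughclassification}, which handles the unbounded case, and Theorem~\ref{bounded}, which handles the bounded case. The only point requiring care is the transition from $\sF(\widehat{\cK}(N;\epsilon))$ to $\sB(\cK(N;\epsilon))$, namely that the central element $C$ acts as a scalar on a simple quasifinite module and acts as zero when that module is bounded, so that a simple bounded $\widehat{\cK}(N;\epsilon)$ module is the same data as a simple bounded $\cK(N;\epsilon)$ module. For $N>4$ this is automatic since $\widehat{\cK}(N;\epsilon)=\cK(N;\epsilon)$; for $N\le3$ it was noted at the start of Section~\ref{Mainresults} that $C$ acts trivially on any simple module in $\sB(\widehat{\cK}(N;\epsilon))$ (this follows from the corresponding fact for the Virasoro subalgebra spanned by the $D_{t^{k+1}}$), so $\sB(\cK(N;\epsilon))$ is naturally equivalent to $\sB(\widehat{\cK}(N;\epsilon))$.

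First I would take a simple module $M\in\sF(\widehat{\cK}(N;\epsilon))$ and split into two cases according to whether $M\in\sB(\widehat{\cK}(N;\epsilon))$ or not. If $M\notin\sB(\widehat{\cK}(N;\epsilon))$, then Lemma~\ref{roughclassification} immediately gives that $M$ is a highest weight module or a lowest weight module, and we are done. If $M\in\sB(\widehat{\cK}(N;\epsilon))$, then by the remarks above (for $N\le3$) or trivially (for $N>4$), $C$ acts as $0$ on $M$, so $M$ descends to a simple module in $\sB(\cK(N;\epsilon))$. If this module is trivial it is in particular a highest weight module (the one-dimensional one), so we may assume it is nontrivial. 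Then Theorem~\ref{bounded} applies and tells us $M$ is a simple quotient of $\cF_a(V,c)$ for some $a,c\in\bC$ and some simple finite dimensional $\fs\fo_N$ module $V$. Combining the two cases yields the statement.

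The steps, in order, are: (1) dispose of the case $M\notin\sB(\widehat{\cK}(N;\epsilon))$ via Lemma~\ref{roughclassification}; (2) in the case $M\in\sB(\widehat{\cK}(N;\epsilon))$, argue that $C$ acts trivially and hence $M\in\sB(\cK(N;\epsilon))$; (3) dispose of the trivial module; (4) apply Theorem~\ref{bounded} to the remaining nontrivial bounded case. None of these steps is itself a serious obstacle, since the hard analytic work is hidden inside Lemma~\ref{roughclassification} (the reduction to highest/lowest weight via a weight-dimension growth argument and Lemma~\ref{upperbounded}) and inside Theorem~\ref{bounded} (the $\cA$-cover machinery, Lemma~\ref{cover}, and the classification of simple jet modules in Theorem~\ref{jetmodule}). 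If I had to name a subtlety, it would be making sure the $N\le3$ claim that $C$ acts trivially on bounded modules is genuinely justified: it rests on restricting to the Virasoro subalgebra $\Vir\cong\langle D_{t^{k+1}}\rangle\oplus\bC C$ and invoking that a bounded weight module over $\Vir$ has trivial central charge, which is standard but should be cited. Everything else is bookkeeping gluing the two theorems together.
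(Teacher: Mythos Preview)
Your proposal is correct and matches the paper's own approach exactly: the paper's proof of Theorem~\ref{mainresult} is a one-line combination of Lemma~\ref{roughclassification} and Theorem~\ref{bounded}, with the preliminary observation (stated at the start of Section~\ref{Mainresults}) that for $N\le3$ the center $C$ acts trivially on simple bounded modules via restriction to the Virasoro subalgebra. Your added remark that the trivial module can be absorbed into the highest-weight case is a harmless tidying step not spelled out in the paper.
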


\begin{remark}
\begin{enumerate}
\item Theorem \ref{mainresult} shows that the Mart\'inez-Zelmanov's conjecture in \cite{MZe1} holds for $\cK(N;\epsilon)$ ($N\ne4$).
\item When $N=1$, we recover Theorem 4.3 in \cite{CLL}, Theorem 4.7 in \cite{CL} as well as Theorem ? in \cite{S1}. When $N=2,\epsilon=\one$, we recover Theorem 5.2 in \cite{LPX}.
\item Highest/lowest weight modules for $K(1;\epsilon)$ is given in \cite{IK1,IK2}. Highest/lowest weight module $K(2;\epsilon)$ were studied in \cite{ST1,ST2} (untwisted secror) and \cite{IK3} (twisted sector). 
\end{enumerate}
\end{remark}

\section{Simple bounded modules: $N=2$}\label{N=2}

In this section, we will study simple quotients of $\cF_a(V,c)$ for $\cK(2;\epsilon)$. Together with the results in \cite{ST1,ST2,IK3}, we get a complete classification of simple quasifinite modules for the $N=2$ super Virasoro algebras $\widehat{\cK}(2;\epsilon)$.  Since any simple finite dimensional $\fs\fo_2\oplus\bC z$ module is $1$-dimensional, throughout this section, we assume that $V=\bC v$ with $(E_{21}-E_{12})v=bv, zv=cv$ for some $b,c\in\bC$ and denote $\cF_a(V,c)$ by $\cF_a(b,c)$.  From Lemma \ref{equivcat}, we only need to consider $\epsilon=(1,1)$ and $\epsilon=(1,0)$.

\subsection{Untwisted sector ($\epsilon=(1,1)$)}

From Proposition \ref{isomtensor}, we may assume that $a\in[0,2)$. For $l\in\bZ$, let $v_l=v\otimes t^{a+2l}, v^{\pm}_l=(D_{t\xi_1}\pm\boldsymbol{i}D_{t\xi_2})v\otimes t^{a+2l}, w_l=D_{t\xi_1}D_{t\xi_2}v\otimes t^{a+2l}$. Then $\cF_a(b,c)=\sum\limits_{l\in\bZ}(\bC v_l+\bC v^+_l+\bC v_l^-+\bC w_l)$ with actions
\begin{align*}
&D_{t^{k+1}}v_l=(a+2l+(c+2)k)v_{l+k}, \, (D_{t^{k+1}\xi_1}\pm\boldsymbol{i}D_{t^{k+1}\xi_2})v_l=v_{l+k}^{\pm},\, D_{t^{k+1}\xi_1\xi_2}v_l=bv_{l+k},\\
&D_{t^{k+1}}v_l^{\pm}=(a+2l+(c+1)k)v_{l+k}^{\pm},\, D_{t^{k+1}\xi_1\xi_2}v_l^{\pm}=(b\mp\boldsymbol{i})v_{l+k}^{\pm},\\
&(D_{t^{k+1}\xi_1}+\boldsymbol{i}D_{t^{k+1}\xi_2})v_l^+=0, \, (D_{t^{k+1}\xi_1}-\boldsymbol{i}D_{t^{k+1}\xi_2})v_l^-=0,\\ &(D_{t^{k+1}\xi_1}-\boldsymbol{i}D_{t^{k+1}\xi_2})v_l^+=-(a+2l+2(c+b\boldsymbol{i}+1)k)v_{l+k}+2\boldsymbol{i}w_{l+k},\\
&(D_{t^{k+1}\xi_1}+\boldsymbol{i}D_{t^{k+1}\xi_2})v_l^-=-(a+2l+2(c-b\boldsymbol{i}+1)k)v_{l+k}-2\boldsymbol{i}w_{l+k},\\
&D_{t^{k+1}}w_l=bk^2v_{l+k}+(a+2l+ck)w_{l+k},\, D_{t^{k+1}\xi_1\xi_2}=-ckv_{l+k}+bw_{l+k},\\
&(D_{t^{k+1}\xi_1}\pm\boldsymbol{i}d_{t^{k+1}\xi_2})w_l=(bk\pm\frac{\boldsymbol{i}}{2}(a+2l+2ck))v_{l+k}^{\pm}.
\end{align*}
$\cF_a(b,c)$ is isomorphic to the module $\cR_{\frac{a}{2},\frac{c}{2}+1,b\boldsymbol{i}}$ defined in \cite{LPX} via $v_l\mapsto v_l, \frac{1}{\sqrt{2}}v_l^\pm\mapsto v_l^\pm, -\frac{1}{2}(a+2l)v_l-\boldsymbol{i}w_l\mapsto v_l^{+-}$. Therefore, we have
\begin{proposition}
$\cF_a(b,c)$ is simple if and only if $b^2+c^2\ne0$.
\end{proposition}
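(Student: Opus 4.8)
The plan is to transfer the problem, through the isomorphism $\cF_a(b,c)\cong\cR_{\frac a2,\frac c2+1,b\boldsymbol{i}}$ established just above, to the modules $\cR_{\alpha,\beta,\gamma}$ of \cite{LPX}, whose reducibility is determined there. First I would quote from \cite{LPX} the precise reducibility locus of $\cR_{\alpha,\beta,\gamma}$; it is a polynomial condition in $(\beta,\gamma)$ not involving $\alpha$. Substituting $(\alpha,\beta,\gamma)=(\tfrac a2,\tfrac c2+1,b\boldsymbol{i})$ and using $\gamma^2=-b^2$ and $2(\beta-1)=c$, this condition becomes $b^2+c^2=0$, so $\cF_a(b,c)$ is simple iff $b^2+c^2\ne0$. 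The only delicate point in this short route is matching conventions: a stray sign or factor of $2$ in the dictionary $v_l\mapsto v_l$, $\tfrac1{\sqrt2}v^{\pm}_l\mapsto v^{\pm}_l$, $-\tfrac12(a+2l)v_l-\boldsymbol{i}w_l\mapsto v^{+-}_l$ would corrupt the final condition, so I would re-derive this isomorphism against the action formulas of \cite{LPX} before invoking their statement.

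For self-containedness I would also argue directly, using the factorization $b^2+c^2=(c+b\boldsymbol{i})(c-b\boldsymbol{i})$. For the ``only if'' direction, assume $b^2+c^2=0$. By the automorphism $\xi_1\leftrightarrow\xi_2$ of $\cK(2;\one)$, which fixes $D_t$ and carries $\cF_a(b,c)$ to a module of the same type with $b$ replaced by $-b$, we may assume $c+b\boldsymbol{i}=0$. Put $u_l:=-(a+2l)v_l+2\boldsymbol{i}w_l$ and $W:=\sum_{l\in\bZ}(\bC v^+_l+\bC u_l)$. A direct check against the displayed actions, in which the vanishing identities $c+b\boldsymbol{i}=0$, $b-\boldsymbol{i}c=0$ and $\boldsymbol{i}b+c=0$ are used at the relevant spots, gives $(D_{t^{k+1}\xi_1}-\boldsymbol{i}D_{t^{k+1}\xi_2})v^+_l=u_{l+k}$, $(D_{t^{k+1}\xi_1}+\boldsymbol{i}D_{t^{k+1}\xi_2})v^+_l=0$, $D_{t^{k+1}}u_l=(a+2l+ck)u_{l+k}$, $D_{t^{k+1}\xi_1\xi_2}u_l=bu_{l+k}$, $(D_{t^{k+1}\xi_1}+\boldsymbol{i}D_{t^{k+1}\xi_2})u_l\in\bC v^+_{l+k}$ and $(D_{t^{k+1}\xi_1}-\boldsymbol{i}D_{t^{k+1}\xi_2})u_l=0$; hence $W$ is a submodule, and it is nonzero and proper since it meets the weight-$(a+2l)$ space in a two-dimensional subspace containing no $v^-_l$. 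Therefore $\cF_a(b,c)$ is not simple.

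For the ``if'' direction, suppose $b^2+c^2\ne0$, equivalently $c+b\boldsymbol{i}\ne0$ and $c-b\boldsymbol{i}\ne0$, and let $W\ne0$ be a submodule. It is a weight submodule, so it contains a nonzero homogeneous vector; splitting it with the semisimple operator $D_{t\xi_1\xi_2}$, whose eigenvalues $b$, $b-\boldsymbol{i}$, $b+\boldsymbol{i}$ on the weight-$(a+2l)$ space are distinct, we may assume $W$ contains a nonzero $v^+_l$, a nonzero $v^-_l$, or a nonzero combination of $v_l$ and $w_l$. From a vector with nonzero $w$-part one reaches some nonzero $v^{\pm}_{l'}$ by applying $(D_{t^{k+1}\xi_1}\pm\boldsymbol{i}D_{t^{k+1}\xi_2})$, using that $b+\boldsymbol{i}c$ and $b-\boldsymbol{i}c$ are nonzero; from $v^{+}_l$ (say) one first spreads to $v^+_{l'}$ for all $l'$ by the Virasoro action, then applies $(D_{t^{k+1}\xi_1}-\boldsymbol{i}D_{t^{k+1}\xi_2})$ at two different source weights landing in a common target weight $m$, whose $v_m$-coefficients differ by a nonzero multiple of $c+b\boldsymbol{i}$, producing a bare $v_m\in W$ and then a bare $w_m$ by back-substitution; the $v^-$ case is symmetric, using $c-b\boldsymbol{i}\ne0$. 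Once $W$ contains a bare $v_m$ for one $m$, the identities $(D_{t^{k+1}\xi_1}\pm\boldsymbol{i}D_{t^{k+1}\xi_2})v_m=v^{\pm}_{m+k}$ together with the Virasoro action give $W=\cF_a(b,c)$. The step I expect to be the main obstacle is exactly this climbing argument: organizing the finitely many degenerate weights (where a coefficient such as $a+2l$ or $a+2l+(c+1)k$ happens to vanish) so they do not block the propagation, and verifying that the coefficients one must invert are nonzero precisely under the hypothesis $b^2+c^2\ne0$.
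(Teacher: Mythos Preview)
Your first route---transferring to $\cR_{\frac a2,\frac c2+1,b\boldsymbol{i}}$ and invoking the reducibility criterion from \cite{LPX}---is exactly what the paper does, and your caution about verifying the dictionary is well placed. Your self-contained direct argument is also correct: the submodule $W$ you build in the ``only if'' direction coincides (up to scaling) with the paper's $W_{a,c}^{+}$ introduced immediately afterward, and your ``if'' argument via the $D_{t\xi_1\xi_2}$-eigenspace splitting together with the two-source difference trick (yielding $2(k-k')(c+b\boldsymbol{i})v_m$) is a clean alternative to the determinant computation the paper sketches elsewhere; the degenerate-weight bookkeeping you flag is indeed the only place requiring care, and it goes through since under $b^2+c^2\ne0$ both $b\pm\boldsymbol{i}$ cannot simultaneously vanish with the Virasoro coefficient.
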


Now let us consider $\cF_a(b,c)$ with $b^2+c^2=0$. In this case, let
\[
W_{a,c}^{\pm}:=\sum\limits_{l\in\bZ}\Big(\bC v_l^{\pm}+\bC(\frac{1}{2}(a+2l)v_l\mp\boldsymbol{i}w_l)\Big).
\]
One can easily check that when $c+\boldsymbol{i}b=0$ ($c-\boldsymbol{i}b=0$, respectively), $W^+_{a,c}$ ($W^-_{a,c}$, respectively) is isomorphic to the module $\cR_{\frac{a}{2},\frac{c+1}{2}}$ defined in \cite{LPX}.
\begin{proposition}\label{submodnonzero1}
Suppose $bc\ne0$ and $b^2+c^2=0$.
\begin{enumerate}
\item If $a\ne0$ or $c\ne-1$, then $W_{a,c}^+$ is the unique maximal submodule of $\cF_a(b,c)$ and $\cF_a(b,c)/W_{a,c}^+\cong\Pi(W_{a,c+1}^+)$.
\item If $a=0$ and $c=-1$, then $W^{\mathrm{sgn}(\frac{1}{\boldsymbol{i}b})}_{0,-1}+\bC v_0^{\mathrm{sgn}(\frac{\boldsymbol{i}}{b})}$ is the unique maximal submodule of $\cF_0(b,-1)$.
\end{enumerate}
\end{proposition}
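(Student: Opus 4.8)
The plan is to argue directly from the explicit action formulas displayed for $\cF_a(b,c)=\cF_a(V,c)$. The structural observation driving everything is that $D_t$ and $D_{t\xi_1\xi_2}$ commute, so $\cF_a(b,c)$ decomposes into their common eigenspaces; on the weight space at $a+2l$ the operator $D_{t\xi_1\xi_2}$ has the three \emph{distinct} eigenvalues $b$, $b-\boldsymbol{i}$, $b+\boldsymbol{i}$ with eigenspaces $\bC v_l+\bC w_l$, $\bC v^+_l$, $\bC v^-_l$. Hence every submodule $U$ is the sum over $l$ of $U\cap\bC v^+_l$, $U\cap\bC v^-_l$ and $U\cap(\bC v_l+\bC w_l)$. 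Since $b^2+c^2=0$ and $bc\ne0$, exactly one of $c+\boldsymbol{i}b=0$, $c-\boldsymbol{i}b=0$ holds; I treat $c+\boldsymbol{i}b=0$ (equivalently $b=\boldsymbol{i}c$), which is precisely the condition making $W^+_{a,c}$ a submodule, as one checks on the formulas — in particular $(D_{t^{k+1}\xi_1}-\boldsymbol{i}D_{t^{k+1}\xi_2})v^+_l=-2\big(\tfrac12(a+2(l+k))v_{l+k}-\boldsymbol{i}w_{l+k}\big)\in W^+_{a,c}$. The case $c-\boldsymbol{i}b=0$ is the mirror image with $W^-_{a,c}$ in place of $W^+_{a,c}$, which is exactly how the $\mathrm{sgn}$-notation in (2) is to be read.

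For (1): modulo $W^+_{a,c}$ one has $v^+_l\equiv 0$ and $w_l\equiv\tfrac{a+2l}{2\boldsymbol{i}}v_l$, so $\cF_a(b,c)/W^+_{a,c}$ has weight spaces $\bC\bar v_l\oplus\bC\bar v^-_l$, and using $c+\boldsymbol{i}b=0$ (so $c-\boldsymbol{i}b=2c$) the induced action collapses to the two-row module with $(D_{t^{k+1}\xi_1}-\boldsymbol{i}D_{t^{k+1}\xi_2})\bar v_l=\bar v^-_{l+k}$ and $(D_{t^{k+1}\xi_1}+\boldsymbol{i}D_{t^{k+1}\xi_2})\bar v^-_l=-2(a+2l+2(c+1)k)\bar v_{l+k}$ (the remaining structure constants being $(a+2l+(c+2)k)$, $(a+2l+(c+1)k)$ and scalars). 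Comparing these with the action on the submodule $W^+_{a,c+1}\subseteq\cF_a(\boldsymbol{i}(c+1),c+1)$ spanned by $v'^+_l$ (odd) and $\tfrac12(a+2l)v'_l-\boldsymbol{i}w'_l$ (even) produces an explicit isomorphism $\cF_a(b,c)/W^+_{a,c}\xrightarrow{\ \sim\ }\Pi(W^+_{a,c+1})$, $\bar v_l\mapsto v'^+_l$, $\bar v^-_l\mapsto\tfrac12(a+2l)v'_l-\boldsymbol{i}w'_l$; verifying the relations is routine. That this module is simple when $a\ne 0$ or $c\ne-1$ I prove by a sweeping argument: from a nonzero homogeneous vector apply $D_{t^{k+1}\xi_1}-\boldsymbol{i}D_{t^{k+1}\xi_2}$ (coefficient $1$, never zero) to reach every $\bar v^-_m$, then $D_{t^{k+1}\xi_1}+\boldsymbol{i}D_{t^{k+1}\xi_2}$ and $D_{t^{k+1}}$ to reach every $\bar v_n$, noting that for each target the relevant coefficient $a+2l+2(c+1)k$ or $a+2l+(c+2)k$ vanishes for at most one $k$ — this is exactly where $a\ne 0$ or $c\ne -1$ is used — while infinitely many starting indices $l$ are available. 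Thus $W^+_{a,c}$ is maximal; for \emph{uniqueness} I show any submodule $U\not\subseteq W^+_{a,c}$ is all of $\cF_a(b,c)$: such $U$ contains some $v^-_l$ or some element of $\bC v_l+\bC w_l$ off the line $\bC(\tfrac12(a+2l)v_l-\boldsymbol{i}w_l)$; applying $D_{t^{k+1}\xi_1}+\boldsymbol{i}D_{t^{k+1}\xi_2}$, whose coefficient is affine in $k$ with leading term $(b+\boldsymbol{i}c)k=2\boldsymbol{i}ck\ne 0$, produces $v^+_n$ for infinitely many $n$, hence all lines $\bC(\tfrac12(a+2m)v_m-\boldsymbol{i}w_m)$ via $(D_{t^{k+1}\xi_1}-\boldsymbol{i}D_{t^{k+1}\xi_2})v^+_n$; two independent vectors in one $\bC v_m+\bC w_m$ then sweep out that plane and, with the raising/lowering operators, all of $\cF_a(b,c)$.

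For (2), when $a=0$, $c=-1$ one has $b\in\{\boldsymbol{i},-\boldsymbol{i}\}$; in the flavour $c+\boldsymbol{i}b=0$ (so $b=-\boldsymbol{i}$ and the two $\mathrm{sgn}$'s are $+$ and $-$) the only place the argument above fails is the vector $v^-_0$ at weight $0$: there $(D_{t^{k+1}\xi_1}+\boldsymbol{i}D_{t^{k+1}\xi_2})v^-_0=2kv_k-2\boldsymbol{i}w_k$ is a scalar multiple of $\tfrac12(a+2k)v_k-\boldsymbol{i}w_k$ for \emph{every} $k$ and all other images of $v^-_0$ vanish, so $v^-_0$ does not escape $W^+_{0,-1}$, whence $\widetilde W:=W^+_{0,-1}+\bC v^-_0$ is a submodule. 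One then checks as in (1) that $\cF_0(-\boldsymbol{i},-1)/\widetilde W$ is simple — now $\bar v^-_0=0$, so one starts the chain at $\bar v_0$ and runs through the $\bar v^-_m$ with $m\ne 0$, where the coefficient $-4m$ is nonzero — and that any submodule not contained in $\widetilde W$ is all of $\cF_0(-\boldsymbol{i},-1)$, the relevant homogeneous vectors outside $\widetilde W$ being $v^-_l$ ($l\ne0$), the off-line elements of $\bC v_l+\bC w_l$ ($l\ne 0$), and the elements of $\bC v_0+\bC w_0$ outside $\bC w_0$, each generating everything by the argument of the preceding paragraph.

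I expect the main obstacle to be precisely the uniqueness (as opposed to mere maximality) of the distinguished submodule in the degenerate case $a=0$, $c=-1$: one must isolate the single extra line $\bC v_0^{\mathrm{sgn}(\boldsymbol{i}/b)}$ that joins the maximal submodule, verify its $\cK(2;\epsilon)$-stability, and re-run the sweeping argument with the weight-zero space truncated — the remaining content is elementary but voluminous bookkeeping with the explicit structure constants. As a cross-check, all of these reducibility statements can alternatively be deduced from the structure of the modules $\cR$ of \cite{LPX} through the isomorphisms $\cF_a(b,c)\cong\cR_{a/2,\,c/2+1,\,b\boldsymbol{i}}$ and $W^\pm_{a,c}\cong\cR_{a/2,(c+1)/2}$ recorded before the statement.
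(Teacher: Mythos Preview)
Your argument is correct and parallels the paper's: both exploit the joint $(D_t,D_{t\xi_1\xi_2})$-eigenspace decomposition and the explicit action formulas, with your contrapositive uniqueness argument (any $U\not\subseteq W^+_{a,c}$ is everything) neatly avoiding the paper's separate treatment of the case $(a,c)=(0,1)$ where $W^+_{a,c}$ itself fails to be simple. One small correction: applying $D_{t^{k+1}\xi_1}+\boldsymbol{i}D_{t^{k+1}\xi_2}$ once to $v^-_l$ lands in $\bC v_{l+k}+\bC w_{l+k}$, not on $\bC v^+_n$ --- you need a second raising (equivalently the paper's identity $(D_{t^{k+1}\xi_1}+\boldsymbol{i}D_{t^{k+1}\xi_2})(D_{t\xi_1}+\boldsymbol{i}D_{t\xi_2})v^-_l=-2ck\,v^+_{l+k}$) to reach the $v^+$-line, after which your sweep goes through unchanged.
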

\begin{proof}
We only prove statements when $c+b\boldsymbol{i}=0$, the arguments when $c-b\boldsymbol{i}=0$ are similar.

First one can easily check that $W_{a,c}^+$ is a submodule and $\cF_a(c\boldsymbol{i},c)/W_{a,c}^+\cong\Pi(W_{a,c+1}^+)$ via $\bar{v}_l\mapsto v_l^+, \bar{v}_l^-\mapsto-2(a+2l)v_l+2\boldsymbol{i}w_l$. Since $W_{a,c+1}^+$ is simple if and only if $a\ne0$ or $c\ne-1$, we know that $W_{a,c}^+$ is a maximal submodule when $a\ne0$ or $c\ne-1$.

Let $W$ be a maximal submodule of $\cF_a(c\boldsymbol{i},c)$. Then since $\cF_a(c\boldsymbol{i},c)$ is a weight $\bC D_{t}+\bC D_{t\xi_1\xi_2}$ module, we know that there exists $l\in\bZ$ such that $W\cap(\bC v_l+\bC w_l)\ne0$ or $v_l^+\in W$ or $v_l^-\in W$.  Moreover, since
\begin{align}
&(D_{t^{k+1}\xi_1}+\boldsymbol{i}D_{t^{k+1}\xi_2})(D_{t\xi_1}+\boldsymbol{i}D_{t\xi_2})v_l^-=-2ckv_{l+k}^+,\label{eq1}\\
&(D_{t^{k+1}\xi_1}+\boldsymbol{i}D_{t^{k+1}\xi_2})(\alpha v_l+\beta w_l)=\Big(\alpha+\boldsymbol{i}\beta(\frac{a}{2}+l+2ck)\Big)v_{l+k}^+,\nonumber
\end{align}
We may always assume that $v_l^+\in W$ for some $l$.

If $a\ne0$ or $c\ne1$, then $W_{a,c}^+$ is simple. Hence, when $a\ne0$ or $c\ne1$, then $W_{a,c}^+\subseteq W$. So when $a\ne0$ or $c\ne\pm1$, $W=W_{a,c}^+$.

When $a=0,c=1$, then $\sum\limits_{k\ne0}\bC v_k^++\sum\limits_{j\in\bZ}\bC(jv_j-\boldsymbol{i}w_j)\subseteq W$. If $W\ne W_{a,c}^+$, then there exists $l\in\bZ$ such that $v_l^-\in W$ or $v_l\in W$. Moreover, applying $D_{t^{k+1}}$, we may assume $l\ne0$. If $v_l\in W$, then $v_0^+\in W$, a contradiction. If $v_l^-\in W$ with $l\ne0$, then (\ref{eq1}) tells us that $v_0^+\in W$, which is a contradiction. Thus, when $a=0, c\ne-1$, $W_{a,c}^+$ is the unique maximal submodule.

Finally, suppose $a=0,c=-1$. One can easily check $W_{a,c}^++\bC v_0^-$ is a maximal submodule. If $W\ne W_{a,c}^++\bC v_0^-$, then there exits $l$ such that $v_l\in W$ or $v_l^-\in W$. However, from the action of $D_{t^{k+1}\xi_1}-\boldsymbol{i}D_{t^{k+1}\xi_2}$ ($D_{t^{k+1}}$, respectively), $v_l\in W$ ($v_l^-\in W$, respectively) would imply $v_0^-\in W$, which is a contradiction. This completes the proof.
\end{proof}

\begin{proposition}
Suppose $b=c=0$.
\begin{enumerate}
\item If $a=0$, then $\{0\}, \bC w_0, W^+,W^-, W^++ W^-$ are all proper submodules of $\cF_0(V,0)$. In particular, $\cF_0(V,0)$ has a unique simple quotient, which is a trivial module.
\item If $a\ne0$, then $W^\pm$ are simple submodules and $\cF_a(V,0)=W^+\oplus W^-$.
\end{enumerate}
\end{proposition}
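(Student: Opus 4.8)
The plan is to specialize the displayed $\cA\cK(2;(1,1))$-action on $\cF_a(V,0)=\cF_a(0,0)$ by putting $b=c=0$, and to work throughout with the vectors $u^{\pm}_l:=\tfrac12(a+2l)v_l\mp\boldsymbol{i}w_l$, so that $W^{\pm}=\sum_{l\in\bZ}(\bC v^{\pm}_l+\bC u^{\pm}_l)$. Since $b=c=0$ makes both hypotheses $c\pm\boldsymbol{i}b=0$ hold, the remark just before Proposition \ref{submodnonzero1} already gives that $W^{\pm}$ are $\cA\cK(2;(1,1))$-submodules; one records in particular the relations $(D_{t^{k+1}\xi_1}-\boldsymbol{i}D_{t^{k+1}\xi_2})v^{+}_l=-2u^{+}_{l+k}$ and $(D_{t^{k+1}\xi_1}+\boldsymbol{i}D_{t^{k+1}\xi_2})u^{+}_m=(a+2m)v^{+}_{m+k}$, together with their mirror images under $+\leftrightarrow-$. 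The key structural observation is that $\cF_a(V,0)$ is a weight module for $\bC D_t\oplus\bC D_{t\xi_1\xi_2}$ in which each $\bC v^{\pm}_l$ is a one-dimensional weight space (of weight $(a+2l,\mp\boldsymbol{i})$) while $\bC v_l+\bC w_l$ is the two-dimensional weight-$(a+2l,0)$ space; hence every submodule is the span of the weight vectors it contains, and a weight vector of weight $(a+2l,0)$ has the form $\alpha v_l+\beta w_l$.

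For part (2), assume $a\ne0$; by Proposition \ref{isomtensor} we may take $a\in(0,2)$, so $a+2l\ne0$ for all $l\in\bZ$. From $u^{+}_l+u^{-}_l=(a+2l)v_l$ and $u^{+}_l-u^{-}_l=-2\boldsymbol{i}w_l$ we get $v_l,w_l\in W^++W^-$, whence $W^++W^-=\cF_a(V,0)$; a one-line comparison of coordinates in the basis $\{v_l,v^{\pm}_l,w_l\}$ gives $W^+\cap W^-=0$, so $\cF_a(V,0)=W^+\oplus W^-$. Simplicity of $W^+$ (and, by symmetry, $W^-$) follows since a nonzero submodule $U$ must contain some $v^+_l$ or some $u^+_l$, and then $D_{t^{k+1}\xi_1}-\boldsymbol{i}D_{t^{k+1}\xi_2}$ applied to a $v^+_l$ produces every $u^+_m$, while $D_{t^{k+1}\xi_1}+\boldsymbol{i}D_{t^{k+1}\xi_2}$ applied to a $u^+_m$ produces every $v^+_m$ (the coefficient $a+2m$ never vanishes); so $U=W^+$. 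Alternatively this is immediate from the identification $W^{\pm}\cong\cR_{a/2,1/2}$ recalled before Proposition \ref{submodnonzero1} and the simplicity criterion used there.

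For part (1), assume $a=0$, so $u^{\pm}_0=\mp\boldsymbol{i}w_0$, and since now $a=b=c=0$ every generator $D_{t^{k+1}\xi_I}$ annihilates $w_0$ (each relevant structure constant carries a factor $a+2\cdot0$, $b$, or $c$); thus $\bC w_0$ is a trivial submodule. I next claim every nonzero submodule $U$ contains $w_0$. Indeed, take a nonzero weight vector $x\in U$: if $x\in\bC v^+_l$ then $(D_{t^{-l+1}\xi_1}-\boldsymbol{i}D_{t^{-l+1}\xi_2})x=-2u^+_0\in\bC w_0\setminus\{0\}$; if $x\in\bC v^-_l$, use the $+$-combination; and if $x=\alpha v_l+\beta w_l$ with $(\alpha,\beta)\ne0$, then — treating $l=0$, where $x=\beta w_0$ is already fine, separately — the coefficients $\alpha+\boldsymbol{i}l\beta$ and $\alpha-\boldsymbol{i}l\beta$ of $v^+_{l+k}$ and $v^-_{l+k}$ obtained by applying $D_{t^{k+1}\xi_1}+\boldsymbol{i}D_{t^{k+1}\xi_2}$ and $D_{t^{k+1}\xi_1}-\boldsymbol{i}D_{t^{k+1}\xi_2}$ cannot both vanish, so $U$ meets some $\bC v^{\pm}_m$ and we reduce to the previous cases. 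Hence $\bC w_0$ is the unique minimal submodule.

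Finally, let $U\supsetneq\bC w_0$ be a submodule and pick a weight vector $x\in U\setminus\bC w_0$. Rerunning the generation argument of part (2) — the exceptional index $m=0$ now causes no trouble, since once all $u^{\pm}_m$ are present a single $m\ne0$ recovers every $v^{\pm}_m$ — shows $W^+\subseteq U$ or $W^-\subseteq U$. If both hold, $U\supseteq W^++W^-$, which one checks has codimension $1$ in $\cF_0(V,0)$ (the basis vector $v_0$ is the only one missing), so $U=W^++W^-$ by properness. If, say, $W^+\subseteq U$ but $W^-\nsubseteq U$, then a weight vector of $U$ outside $W^+$ would — using that $\alpha-\boldsymbol{i}l\beta=\boldsymbol{i}(-\boldsymbol{i}\alpha-l\beta)\ne0$ holds exactly when $\alpha v_l+\beta w_l\notin\bC u^+_l$, with the $l=0$ case handled the same way — force some $v^-_m\in U$, hence $W^-\subseteq U$, a contradiction; so $U=W^+$, and symmetrically $U=W^-$ in the mirror case. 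Thus the proper submodules of $\cF_0(V,0)$ are exactly $\{0\},\bC w_0,W^+,W^-,W^++W^-$; the last is the unique maximal one and $\cF_0(V,0)/(W^++W^-)$, spanned by the image of $v_0$ and annihilated by every $D_{t^{k+1}\xi_I}$ and by $C$, is the trivial module, which is therefore the unique simple quotient. The only genuine obstacle is the bookkeeping in part (1): one must keep careful track of the degenerate weight slot $(0,0)$, where $w_0$ sits and where $a+2l$ vanishes, so that the "generate everything" moves of part (2) still go through, and verify the non-degeneracy of the coefficient pair that pins $U$ between $W^+$ and $W^++W^-$.
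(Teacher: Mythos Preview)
Your proof is correct and follows essentially the same route as the paper's: both arguments use the $\bC D_t\oplus\bC D_{t\xi_1\xi_2}$-weight decomposition, the explicit formulas for $(D_{t^{k+1}\xi_1}\pm\boldsymbol{i}D_{t^{k+1}\xi_2})$ acting on $v_l,\,v^{\pm}_l,\,w_l$, and a case analysis on which type of weight vector a given submodule contains in order to force $W^{+}$ or $W^{-}$ inside it. Your introduction of $u^{\pm}_l$ and the separate verification that $\bC w_0$ is the unique minimal submodule are tidy organizational choices, but they do not change the substance; the paper carries out the same computations with the vectors $-kv_k+\boldsymbol{i}w_k$ directly. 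One small wording point: in the last paragraph you should say ``let $U\supsetneq\bC w_0$ be a \emph{proper} submodule'' before invoking ``by properness'', and when you treat a weight vector of $U$ outside $W^{+}$ you should also remark on the case $x\in\bC v^{-}_l$ (which immediately gives $W^{-}\subseteq U$); both are implicit in what you wrote but worth stating.
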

\begin{proof}
\begin{enumerate}
\item  It is easy to check that when $a=b=c=0$, $\bC w_0, W^\pm$ are submodules. Now let $W$ be a proper submodule of $\cF_0(V,0)$ and $W\ne\{0\}, \bC w_0$. Since $\cF_0(V,0)$ is a weight $\bC D_t+\bC D_{t\xi_1\xi_2}$ module, we know that there exists $l\in\bZ$ such that $W\cap(\bC v_l+\bC w_l)\ne0$ or $v_l^+\in W$ or $v_l^-\in W$.

If $v_l^+\in W$, then from
\[
(D^{t^{k+1}\xi_1}-\boldsymbol{i}D_{t^{k+1}\xi_2})v_l^+=-2(l+k)v_{l+k}+2\boldsymbol{i}w_{l+k},
\]
we know that $-kv_k+\boldsymbol{i}w_k\in W$ for all $k\in\bZ$. Hence, from
\[
(D_{t^{j+1}\xi_1}+\boldsymbol{i}D_{t^{j+1}\xi_2})(-kv_k+\boldsymbol{i}w_k)=-2kv_{k+j}^+,
\]
we know that $v_k^+\in W$ for all $k\in\bZ$. Thus, we have $W^+\subseteq W$. Similarly, if $v_l^-\in W$, then $W^-\subseteq W$.

If $W^+\subsetneq W$, then there exists $k\in\bZ$ such that $v_k\in W$ or $v_k^-\in W$. If $v_0\in W$, then $W=\cF_0(V,0)$, which contradicts with the fact $W$ is proper. If $v_k\in W$ for some $k\ne0$, then
\[
v_k^-=(D_{t\xi_1}-\boldsymbol{i}D_{t\xi_2})v_k\in W.
\]
So we always have $v_k^-\in W$. Thus $W^-\subseteq W$ and hence $W=W^+\oplus W^-$ since $W^++ W^-$ has codimension $1$. Similarly, if $W^-\subsetneq W$, then $W=W^++ W^-$.

Now suppose $\alpha v_l+\beta w_l\in W\setminus\bC w_0$. If $l=0$, then applying $D_{t\xi_1}\pm\boldsymbol{i}D_{t\xi_2}$, we know that $v_l^{\pm}\in W$. Then $W^+\oplus W^-\subseteq W$ and hence $W=\cF_0(V,0)$, contradiction. So $l\ne0$. Also, we may assume $\alpha\pm\boldsymbol{i}l\beta$. Again, applying $D_{t\xi_1}\pm\boldsymbol{i}D_{t\xi_2}$, we have $v_l^+\in W$ or $v_l^-\in W$. And therefore, $W=W^+,W^-$ or $W^++ W^-$.
\item Now suppose $a\ne0$. The fact $\cF_a(V,0)=W^+\oplus W^-$ is obvious. It remains to show $W^\pm$ are simple. Similar as 1, any nonzero submodule of $W^+$ contains some $v_l^+$ or $\frac{1}{2}(a+2l)v_l-\boldsymbol{i}w_l$. Then from the actions of $D_{t^{k+1}\xi_1}+\boldsymbol{i}D_{t^{k+1}\xi_2}$, we get $W^+$ is simple. Similarly, one can show that $W^-$ is simple.\qedhere
\end{enumerate}
\end{proof}

\subsection{Twisted sector ($\epsilon=(1,0)$)}

Now suppose $\epsilon=(1,0)$. First, we have
\begin{proposition}
\begin{enumerate}
\item $\cF_a(b,c)$ is a simple $\cA\cK(2;(1,0))$ module if and only if $b\ne0$.
\item For $\delta\in\{0,1\}$, let $\cF_a(0,c)_\delta:=v\otimes t^{a+\delta}\bC[t^2]+D_{t\xi_1}v\otimes t^{a+\delta}\bC[t^2]+D_{t\xi_2}v\otimes t^{a+\delta+1}\bC[t^2]+D_{t\xi_1}D_{t\xi_2}v\otimes t^{a+\delta+1}\bC[t^2]$. Then $\cF_a(0,c)_\delta$ are simple $\cA\cK(2;(1,0))$ modules and $\cF_a(0,c)=\cF_a(0,c)_0\oplus\cF_a(0,c)_1$.
\item $\cF_a(0,c)_1\cong\cF_{a+1}(0,c)_0$.
\end{enumerate}
\end{proposition}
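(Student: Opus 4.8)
The plan is to work directly from the explicit $\cA\cK(2;(1,0))$-action on $\cF_a(b,c)$. Recall that $\cF_a(b,c)$ has $\bC$-basis the vectors $\mathbf{D}_J v\otimes t^{a+l}$ with $J\subseteq\{1,2\}$, $l\in\bZ$ (that is, $v\otimes t^{a+l}$, $D_{t\xi_1}v\otimes t^{a+l}$, $D_{t\xi_2}v\otimes t^{a+l}$, $D_{t\xi_1}D_{t\xi_2}v\otimes t^{a+l}$ for each $l$), each of $D_t$-weight $a+l$; in particular every weight space is $4$-dimensional and every submodule is a weight submodule. Since $\xi_I=t^0\xi_I\in\cA$ acts by $\xi_I\cdot(\mathbf{D}_J v\otimes t^{a+l})=\delta_{I\subseteq J}\,(\pm1)\,\mathbf{D}_{J\setminus I}v\otimes t^{a+l+\delta_{2\in I}}$, the common opening move for all the simplicity claims is: given a nonzero weight vector $w$ in a submodule $W$, apply $\xi_1\xi_2$, then $\xi_1$, then $\xi_2$ in turn; one of these operations (or $w$ itself) produces a nonzero multiple of a pure vector $v\otimes t^{a+l'}$ in $W$.

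For the direction of (1) that $b\ne0$ implies $\cF_a(b,c)$ simple: after this reduction we may assume $v\otimes t^{a+l'}\in W$. Then $D_{t^{k+1}\xi_1\xi_2}\cdot(v\otimes t^{a+l'})=b\,v\otimes t^{a+l'+2k+1}$, and $b\ne0$ gives $v\otimes t^{a+l'+2k+1}\in W$ for all $k$; together with $t^{j}\cdot(v\otimes t^{a+l'})=v\otimes t^{a+l'+2j}$ we conclude $v\otimes t^{a+m}\in W$ for all $m\in\bZ$. Now $D_{t^{k+1}\xi_1}$ and $D_{t^{k+1}\xi_2}$ carry these to all $D_{t\xi_1}v\otimes t^{a+n}$ and all $D_{t\xi_2}v\otimes t^{a+n}$, and $D_{t^{k+1}\xi_1}$ applied to a $D_{t\xi_2}v$-vector yields the corresponding $D_{t\xi_1}D_{t\xi_2}v$-vector modulo a multiple of some $v\otimes t^{\cdot}$ already in $W$. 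Hence $W=\cF_a(b,c)$. The converse direction of (1) is contained in (2)--(3): for $b=0$ the module splits as a nontrivial direct sum.

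For (2), equip $\cF_a(0,c)$ with the $\bZ/2\bZ$-grading assigning to $\mathbf{D}_J v\otimes t^{a+l}$ the degree $l+\delta_{2\in J}\bmod 2$; one reads off from the basis that $\cF_a(0,c)_\delta$ is exactly the span of the homogeneous vectors of degree $\bar\delta$, so $\cF_a(0,c)=\cF_a(0,c)_0\oplus\cF_a(0,c)_1$ is immediate. The substantive point is that for $b=0$ each $\cF_a(0,c)_\delta$ is stable: in this case every term in the action formulas proportional to $(E_{21}-E_{12})v=b\,v$ disappears (these are the ``diagonal'' contribution of $D_{t^{k+1}\xi_1\xi_2}$ and the $\mathbf{D}_J$-lowering corrections to the remaining operators), and in the resulting simplified formulas $D_{t^{k+1}\xi_I}$ shifts the $t$-exponent by $2k+\delta_{2\in I}$ and sends $J$ to its symmetric difference with $I$ (or annihilates the vector); since $\delta_{2\in J\triangle I}\equiv\delta_{2\in J}+\delta_{2\in I}$, this preserves $l+\delta_{2\in J}\bmod 2$. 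The $\cA$-action preserves this grading with no assumption on $b$, by the displayed formula above. Simplicity of each $\cF_a(0,c)_\delta$ follows from the argument of (1): the opening move reduces to some $v\otimes t^{a+l'}$ with $\overline{l'}=\bar\delta$, and then $t^{\pm1},D_{t^{k+1}\xi_1},D_{t^{k+1}\xi_2}$ regenerate the whole piece --- within one graded piece only even shifts of $v\otimes t^{\cdot}$ are ever needed, so $b\ne0$ is not required.

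Finally, for (3): the vectors listed for $\cF_a(0,c)_1$ and for $\cF_{a+1}(0,c)_0$ are literally the same elements of $\oplus_I\mathbf{D}_I V\otimes t^a\bC[t^{\pm1}]$ (one has $a+1+2j=(a+1)+2j$, $a+2+2j=(a+1)+1+2j$, and so on), so let $\phi\colon\cF_a(0,c)_1\to\cF_{a+1}(0,c)_0$ be this identification. It is an $\cA\cK(2;(1,0))$-module map because the base point enters the action only through scalars of the form ``$a+l$'' in the $D_{t^{k+1}}$-action, and $a+(1+2j)=(a+1)+2j$ makes these agree; the $\epsilon$- and $c$-dependence, the $\mathbf{D}_I$-combinatorics, and the exponent shifts are base-point independent. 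Being bijective, $\phi$ is an isomorphism. I expect the only genuinely delicate step to be the homogeneity check in (2): tracking the $\mathbf{D}_J$-bookkeeping through (\ref{bracket1}) and confirming that precisely when $b=0$ all degree-violating contributions --- each a multiple of $(E_{21}-E_{12})v$ --- drop out.
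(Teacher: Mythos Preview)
Your proof is correct and follows essentially the same route as the paper's: reduce any nonzero submodule to a pure vector $v\otimes t^{a+l'}$ via the $\xi_I$-actions and the weight decomposition, use $D_{t\xi_1\xi_2}$ (needing $b\ne0$) to reach odd $t$-shifts, and then fill out with $D_{t\xi_1},D_{t\xi_2}$; for (3) both arguments write down the identity-on-basis map. Your treatment is in fact slightly more detailed than the paper's, which dispatches the stability of $\cF_a(0,c)_\delta$ and their simplicity with ``similar argument'' and ``is clear'', whereas you make the $\bZ/2\bZ$-grading $l+\delta_{2\in J}\bmod 2$ explicit and correctly isolate the grading-violating terms as exactly those proportional to $(E_{21}-E_{12})v=bv$.
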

\begin{proof}
From the actions of $t^k\xi_I$ and $D_t$, we know that any nonzero $\cA\cK(2;(1,0))$ submodule $W$ of $\cF_a(b,c)$ contains $v\otimes t^{a+l}$ for some $l$. Therefore $v\otimes t^{a+l}\bC[t^2]\subseteq W$.

If $b\ne0$, then from $D_{t\xi_1\xi_2}(v\otimes t^{a+l})=bv\otimes t^{a+l+1}$ we know that $v\otimes t^a\bC[t]\subseteq W$. Applying $D_{t\xi_1}$ and $D_{t\xi_2}$ we get $W=\cF_a(b,c)$.

Now suppose $b=0$. Similar argument show that $\cF_a(0,c)_\delta$ are simple $\cA\cK(2;(1,0))$ modules. The statement  $\cF_a(0,c)=\cF_a(0,c)_0\oplus\cF_a(0,c)_1$ is clear. To finish the proof, it remains to show $\cF_a(0,c)_1\cong\cF_{a+1}(0,c)_0$. It is easy to check the map from $\cF_a(0,c)_1$ to $\cF_{a+1}(0,c)_0$ defined by mapping $\mathbf{D}_Iv\otimes t^{a+1+\sum\limits_{i\in I}(1-\epsilon_i)+2k}$ to $\mathbf{D}_Iv\otimes t^{a+1+\sum\limits_{i\in I}(1-\epsilon_i)+2k}$ is an isomorphism.
\end{proof}

Similar to Proposition \ref{isomtensor}, we have
\begin{proposition}
\begin{enumerate}
\item For $a,a'\in\bZ, b,b'\in\bC^*, c,c'\in\bC$, $\cF_a(b,c)\cong\cF_{a'}(b',c')$ as $\cA\cK(2;(1,0))$ modules if and only if $a-a'\in\bZ$, $b=b'$ and $c=c'$.
\item For $a,a'\in\bZ,c,c'\in\bC$, $\cF_a(0,c)_0\cong\cF_{a'}(0,c')_0$ as $\cA\cK(2;(1,0))$ modules if and only if $a-a'\in2\bZ$ and $c=c'$.
\end{enumerate}
\end{proposition}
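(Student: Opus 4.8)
The plan is to imitate the proof of Proposition~\ref{isomtensor}. Both "if" implications come from explicit isomorphisms. For (1), observe that in $\cF_a(b,c)$ (with $\epsilon=(1,0)$) the action of every element of $\cA$ and of $\cK(2;(1,0))$ on a weight vector $\mathbf{D}_Iv\otimes t^{\mu}$ produces a weight vector $\mathbf{D}_Jv\otimes t^{\mu+r}$ whose coefficient is a fixed polynomial in the single variable $\mu$ (with coefficients depending only on $b,c$), and not otherwise on $a$; consequently, when $a-a'\in\bZ$ (so $a+\bZ=a'+\bZ$) and $(b,c)=(b',c')$, the ``identity on weight vectors'' map $\mathbf{D}_Iv\otimes t^{\mu}\mapsto\mathbf{D}_Iv\otimes t^{\mu}$ is an $\cA\cK(2;(1,0))$ isomorphism from $\cF_a(b,c)$ to $\cF_{a'}(b',c')$. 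For (2) the same recipe is available, but now the block decomposition of a weight space of $\cF_a(0,c)_0$ --- into the $\{v,D_{t\xi_1}v\}$-block, sitting at weights in $a+2\bZ$, and the $\{D_{t\xi_2}v,D_{t\xi_1}D_{t\xi_2}v\}$-block, sitting at weights in $a+1+2\bZ$ --- is shifted by one when $a\mapsto a+1$; thus the identity-on-weight-vectors map is an isomorphism precisely when $a-a'\in2\bZ$. When $a-a'$ is odd the odd shift is genuinely absorbed into passing to the complementary summand: this is exactly the isomorphism $\cF_a(0,c)_1\cong\cF_{a+1}(0,c)_0$ established in the preceding proposition. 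Together with $c=c'$ this gives the "if" direction of (2).

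For "only if", let $\gamma$ be an isomorphism from $\cF_a(b,c)$ to $\cF_{a'}(b',c')$. Comparing supports yields $a+\bZ=a'+\bZ$, i.e.\ $a-a'\in\bZ$. Next consider the subspace $L=\{w:t^k\xi_1\cdot w=t^k\xi_2\cdot w=0\text{ for all }k\in\bZ\}$; a direct check on the $\cA$-action shows $L=\bigoplus_{\mu}\bC\,(v\otimes t^{\mu})$, and since $L$ is defined using only the $\cA$-action, $\gamma$ restricts to an isomorphism of $L$ onto the corresponding subspace of $\cF_{a'}(b',c')$. The subalgebra $\sum_k\bC D_{t^{k+1}}+\sum_k\bC D_{t^{k+1}\xi_1\xi_2}+\sum_k\bC t^k$ preserves $L$, acting by $t^k(v\otimes t^{\mu})=v\otimes t^{\mu+2k}$, $D_{t^{k+1}}(v\otimes t^{\mu})=(\mu+(c+2)k)v\otimes t^{\mu+2k}$, and $D_{t^{k+1}\xi_1\xi_2}(v\otimes t^{\mu})=b\,v\otimes t^{\mu+2k+1}$. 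Intertwining these through $\gamma$ (normalise $\gamma$ on one weight vector of $L$, then propagate by $t^{\pm1}$ and $D_{t\xi_1\xi_2}$) forces $c=c'$ from the $D_{t^{k+1}}$-coefficients and pins down $b$ from the $D_{t^{k+1}\xi_1\xi_2}$-coefficients, that is, identifies $V$ with $V'$; this is literally the argument in the proof of Proposition~\ref{isomtensor}, specialised to $N=2$. For (2) there is no $b$-parameter and the support comparison only gives $a-a'\in\bZ$; to upgrade to $a-a'\in2\bZ$ one uses that inside $\cF_a(0,c)_0$ a weight space is annihilated by $\sum_k\bC t^k\xi_2$ exactly when its weight lies in $a+2\bZ$ (the $\{v,D_{t\xi_1}v\}$-block), an intrinsic property preserved by $\gamma$; then $c=c'$ follows from the $D_{t^{k+1}}$-action on $L=\bigoplus_k\bC\,(v\otimes t^{a+2k})$ as before.

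The routine parts are the explicit isomorphisms and the propagation computations on $L$. The steps that actually require care are: (i) the intrinsic description of the distinguished subspace $L$ as the common kernel of the odd part of $\cA$, so that an abstract isomorphism is forced to respect it; and, for (2), (ii) isolating the parity-of-weight invariant --- annihilation by $\sum_k\bC t^k\xi_2$ --- that distinguishes $\cF_a(0,c)_0$ from $\cF_{a+1}(0,c)_0$ despite their having the same support. Point (ii) is the single place where the twisted sector genuinely departs from Proposition~\ref{isomtensor}, and I expect it to be the main obstacle.
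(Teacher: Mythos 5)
Your ``if'' directions and statement (2) are sound, but statement (1) has a genuine gap precisely at the step you treat as routine: the claim that propagating along $L$ by $D_{t\xi_1\xi_2}$ ``pins down $b$.'' The crucial difference from the Ramond case of Proposition~\ref{isomtensor} is that for $\epsilon=(1,0)$ the operator $D_{t\xi_1\xi_2}$ shifts the $D_t$-weight by $1$, while the intrinsic $\cA$-generator $t$ shifts it by $2$. Normalising $\gamma(v\otimes t^{\mu})=\alpha_{\mu}(v'\otimes t^{\mu})$ on $L$, propagation by $t$ gives $\alpha_{\mu+2}=\alpha_{\mu}$, while propagation by $D_{t\xi_1\xi_2}$ gives $b\,\alpha_{\mu+1}=b'\alpha_{\mu}$; iterating the latter twice and comparing with the former yields only $b^2=(b')^2$. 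In the Ramond sector $D_{t\xi_1\xi_2}$ is weight-preserving and acts on $V\otimes t^a$ by the scalar $b$ itself, which is why the argument of Proposition~\ref{isomtensor} closes there; that reasoning is simply not available in the twisted sector, and nothing you do with $L$ alone can recover the sign of $b$.

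Indeed the ``only if'' conclusion is false as stated: $\cF_a(b,c)\cong\cF_a(-b,c)$ as $\cA\cK(2;(1,0))$-modules. Taking $v'$ to be the generator of the $\fs\fo_2$-module of weight $-b$, one checks directly against the action formulas in Section~\ref{N=2} that
\[
\psi\bigl(\mathbf{D}_Iv\otimes t^{a+l}\bigr)=(-1)^{l+\delta_{2\in I}}\,\mathbf{D}_Iv'\otimes t^{a+l},
\]
equivalently $v_l\mapsto(-1)^lv_l'$, $v_l^{\pm}\mapsto(-1)^l(v_l')^{\mp}$, $w_l\mapsto-(-1)^lw_l'$, intertwines both the $\cA$-action (using $\sum_{j\in J}(1-\epsilon_j)\equiv\delta_{2\in J}\pmod 2$) and every $D_{t^{k+1}\xi_I}$. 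So the most one can prove in (1) is $c=c'$ and $b=\pm b'$, and your proposed proof, which tries to show more, necessarily breaks at the point flagged above.
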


So, we only need to consider $\cF_a(b,c)$ ($0\le a<1,b\ne0$) and $\cF_a(0,c)_0$ ($0\le a<2$). For $\cF_a(b,c)$, let $v_l=v\otimes t^{a+l}, v_l^{\pm}=(D_{t\xi_1}\pm\boldsymbol{i}D_{t\xi_2})v\otimes t^{a+l}, w_l=D_{t\xi_1}D_{t\xi_2}v\otimes t^{a+l}$. Then $\cF_a(b,c)=\sum\limits_{l\in\bZ}(\bC v_l+\bC v_l^++\bC v_l^-+\bC w_l)$ with actions
\begin{align*}
&D_{t^{k+1}}v_l=(a+l+(c+2)k)v_{l+2k},\,D_{t^{k+1}\xi_1}v_l=\frac{1}{2}(v_{l+2k}^++v_{l+2k}^-),\\
&D_{t^{k+1}\xi_2}v_l=\frac{\boldsymbol{i}}{2}(v_{l+2k+1}^--v_{l+2k+1}^+),\,D_{t^{k+1}\xi_1\xi_2}v_l=bv_{l+2k+1},\\
&D_{t^{k+1}}v_l^{\pm}=(a+l+(c+1)k)v_{l+2k}^{\pm},\, D_{t^{k+1}\xi_1\xi_2}v_l^{\pm}=(b\mp\boldsymbol{i})v_{l+2k+1}^{\pm},\\
&D_{t^{k+1}\xi_1}v_l^{\pm}=-\frac{1}{2}(a+l+2(c\pm b\boldsymbol{i}+1)k)v_{l+2k}\pm\boldsymbol{i}w_{l+2k},\\
&D_{t^{k+1}\xi_2}v_l^{\pm}=\mp\frac{\boldsymbol{i}}{2}(a+l+2(c\pm b\boldsymbol{i}+1)k)v_{l+2k+1}-w_{l+2k+1},\\
&D_{t^{k+1}}w_l=k^2bv_{l+2k}+(a+l+ck)w_{l+2k},\\
&D_{t^{k+1}\xi_1\xi_2}w_l=-\frac{1}{2}(2k+1)cv_{l+2k+1}+bw_{l+2k+1},\\ &D_{t^{k+1}\xi_1}w_l=\frac{\boldsymbol{i}}{4}(a+l+2(c-b\boldsymbol{i})k)v_{l+2k}^+-\frac{\boldsymbol{i}}{4}(a+l+2(c+b\boldsymbol{i})k)v_{l+2k}^-,\\
&D_{t^{k+1}\xi_2}w_l=\frac{1}{4}(a+l+(c-b\boldsymbol{i})(2k+1))v_{l+2k+1}^++\frac{1}{4}(a+l+(c+b\boldsymbol{i})(2k+1))v_{l+2k+1}^-.
\end{align*}
\begin{proposition}
If $b(b^2+c^2)\ne0$, then $\cF_a(b,c)$ is a simple $\cK(2;(1,0))$ module.
\end{proposition}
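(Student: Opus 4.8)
The plan is to prove that any nonzero $\cK(2;(1,0))$-submodule $W$ of $\cF_a(b,c)$ coincides with $\cF_a(b,c)$. Since $\cF_a(b,c)$ is a weight module and $W$ is a submodule over a Lie superalgebra, $W$ is both weight-graded and $\bZ_2$-graded. The heart of the argument is to force a single basis vector $v_j$ into $W$; after that, propagation to the whole basis is routine.

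To obtain a good starting element I would invoke that $\cF_a(b,c)$ is already \emph{simple as an $\cA\cK(2;(1,0))$-module} when $b\ne0$, which is the first part of the preceding Proposition. Then $\cA W$ is a nonzero $\cA\cK(2;(1,0))$-submodule, hence $\cA W=\cF_a(b,c)$. On the other hand, each $t^{k}\xi_I$ sends $\mathbf{D}_Jv\otimes t^{a+l}$ to a scalar multiple of $\mathbf{D}_{J\setminus I}v\otimes t^{a+l'}$ (for a suitable $l'$) when $I\subseteq J$, and to $0$ otherwise, so the $\cA$-action never increases $|J|$; comparing the $\mathbf{D}_{\{1,2\}}$-components in $w_{0}\in\cA W$ then forces $W$ to contain a weight vector with nonzero $w$-component. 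Replacing it by its even $\bZ_2$-homogeneous part and rescaling, we get $y=\alpha v_l+w_l\in W$ for some $l\in\bZ$ and $\alpha\in\bC$.

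Next I would apply to $y$ the two even operators $D_{t^{2}}$ and $(D_{t\xi_1\xi_2})^{2}$; both images lie in the $2$-dimensional weight space $\bC v_{l+2}+\bC w_{l+2}$, and a short computation gives that the determinant of the resulting $2\times2$ coefficient matrix equals $b\bigl(2\alpha b+b^{2}+c^{2}+c(a+l)\bigr)$. If this is nonzero we immediately obtain $v_{l+2}\in W$. If it vanishes, I would move $y$ by $D_{t^{m+1}}$: for any $m$ with $a+l+cm\ne0$ the image is again, after rescaling, of the form $\alpha_m v_{l+2m}+w_{l+2m}$, and a direct computation shows that the analogue of the determinant above at weight $a+l+2m$ equals $2b(b^{2}+c^{2})m(m-1)/(a+l+cm)$, which is nonzero for $m=2$ or $m=-1$ \emph{precisely because} $b^{2}+c^{2}\ne0$. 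The only configuration in which $a+l+cm$ cannot be made nonzero is $c=0$, $a+l=0$; there $a=0$ and $D_{t^{m+1}}(\alpha v_0+w_0)=m(2\alpha+bm)v_{2m}$, nonzero for $m\gg0$ since $b\ne0$. In every case $W$ contains some $v_j$. I expect this to be the main obstacle: it is the only step using $b^{2}+c^{2}\ne0$, and when $b^{2}+c^{2}=0$ the vanishing locus above is stable under $D_{t^{m+1}}$, reflecting a genuine proper submodule.

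Finally, starting from $v_j\in W$: the relations $D_{t^{k+1}\xi_1\xi_2}v_j=b\,v_{j+2k+1}$ ($k\in\bZ$) with $b\ne0$ give $v_{j'}\in W$ for all $j'$ with $j'-j$ odd, and iterating yields $v_{j'}\in W$ for every $j'\in\bZ$; then $D_{t\xi_1}v_{j'}=\tfrac12(v_{j'}^{+}+v_{j'}^{-})$ and $D_{t\xi_2}v_{j'}=\tfrac{\boldsymbol{i}}{2}(v_{j'+1}^{-}-v_{j'+1}^{+})$ put every $v_{j'}^{\pm}$ into $W$; and $D_{t\xi_2}v_{j'}^{+}=-\tfrac{\boldsymbol{i}}{2}(a+j')v_{j'+1}-w_{j'+1}$, together with $v_{j'+1}\in W$, puts every $w_{j'}$ into $W$. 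Hence $W$ contains the whole basis, so $W=\cF_a(b,c)$, which proves simplicity.
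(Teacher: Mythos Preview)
Your proof is correct and complete, but it follows a genuinely different line from the paper's argument.

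The paper never invokes the $\cA\cK$-simplicity of $\cF_a(b,c)$. Instead it observes that the two commuting even operators $D_t$ and $D_{\xi_1\xi_2}D_{t\xi_1\xi_2}$ act diagonally on the basis with eigenvalues $(a+l;\,b^{2})$ on $v_l,w_l$ and $(a+l;\,(b\mp\boldsymbol{i})^{2})$ on $v_l^{\pm}$; this splits every weight space into three pieces, so $W$ must contain some $v_l^{+}$, $v_l^{-}$, or $\alpha v_l+\beta w_l$. From the last case the paper manufactures a $v_l^{\pm}$ by applying $(D_{\xi_1\xi_2}D_{t\xi_1\xi_2}-(b\mp\boldsymbol{i})^{2})D_{t\xi_1}$, and then propagates from a single $v_l^{+}$: first to all $v_{l'}^{+}$ via $D_{t^{k+1}\xi_1\xi_2}$ (using $b\mp\boldsymbol{i}\ne0$), and then to all $v_{j}$ via the identity $D_{t^k\xi_2}v_{l+1}^{+}-D_{t^{k+1}\xi_2}v_{l-1}^{+}=\boldsymbol{i}(c+b\boldsymbol{i})v_{l+2k}$, which is where $b^{2}+c^{2}\ne0$ enters.

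Your route instead leverages the preceding Proposition: from $\cA W=\cF_a(b,c)$ and the fact that the $\cA$-action cannot raise $|J|$, you extract $\alpha v_l+w_l\in W$ directly, bypassing the eigenvalue decomposition. Your $2\times2$ determinant with $D_{t^{2}}$ and $(D_{t\xi_1\xi_2})^{2}$, together with the shift by $D_{t^{m+1}}$ and the clean formula $2b(b^{2}+c^{2})m(m-1)/(a+l+cm)$, pinpoints a $v_j$; this is arguably more transparent about \emph{why} $b^{2}+c^{2}\ne0$ is the exact obstruction. Your approach also sidesteps a minor awkwardness in the paper's eigenvalue step when $b=\pm\boldsymbol{i}/2$ (where two of the three eigenvalues of $D_{\xi_1\xi_2}D_{t\xi_1\xi_2}$ coalesce). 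One small remark: your sentence ``there $a=0$'' tacitly uses the standing normalization $0\le a<1$ stated just before the Proposition; the computation itself works for any $l$ with $a+l=0$, so nothing is lost.
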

\begin{proof}
Let $W$ be a nonzero submodule of $\cF_a(b,c)$. From
\begin{align}
&D_tv_l=(a+l)v_l, D_tv_l^\pm=(a+l)v_l^\pm,D_tw_l=w_l,\label{eq21}\\
&(D_{\xi_1\xi_2}D_{t\xi_1\xi_2})v_l=b^2v_l, (D_{\xi_1\xi_2}D_{t\xi_1\xi_2})v_l^{\pm}=(b\mp\boldsymbol{i})^2v_l^\pm,(D_{\xi_1\xi_2}D_{t\xi_1\xi_2})w_l=b^2w_l,\label{eq22}
\end{align}
we know that $W$ contains $v_l^+,v_l^-$ or $\alpha v_l+\beta w_l$ for some $l$. We claim that either $v_l^+\in W$ for some $l$ or $v_l^-\in W$ for some $l$. Indeed, if $0\ne\alpha v_l+\beta w_l\in W$, then from
\begin{align*}
&(D_{\xi_1\xi_2}-(b-\boldsymbol{i})^2)D_{t\xi_1}(\alpha v_l+\beta w_l)=(2\alpha\boldsymbol{i}+\beta(a+l))bv_l^-,\\
&(D_{\xi_1\xi_2}-(b+\boldsymbol{i})^2)D_{t\xi_1}(\alpha v_l+\beta w_l)=-(2\alpha\boldsymbol{i}-\beta(a+l))bv_l^+,
\end{align*}
we know that claim holds when $a\ne0$ or $\alpha\ne0$. If $a=0$ and $\alpha=0$, then $\beta\ne0$ and claim holds unless $l=0$, that is $w_0\in W$. However, $D_1w_0=bv_{l-2}-cw_{l-2}\in W$ implies that $v_{l-2}^+\in W$ or $v_{l-2}^-\in W$. Thus, claim holds. Without loss of generality, we may assume $v_l^+\in W$. Applying $D_{t^{k+1}\xi_1\xi_2}$, we have $v_{l+2k+1}^+\in W$ for all $k\in\bZ$. Therefore
$W\ni D_{t^k\xi_2}v_{l+1}^+-D_{t^{k+1}\xi_2}v_{l-1}^+=\boldsymbol{i}(c+b\boldsymbol{i})v_{l+2k}$ for all $k$. And hence $bv_{l+2k+1}=D_{t\xi_1\xi_2}v_{l+2k}\in W$. Then, from the actions of $D_{t\xi_1}$ and $D_{t\xi_2}$, we know $W=\cF_a(b,c)$, that is $\cF_a(b,c)$ is simple.
\end{proof}

Now let us consider $\cF_a(b,c)$ with $b\ne0$ and $b^2+c^2=0$. In this case, let $W_{a,c}^\pm:=\sum\limits_{l\in\bZ}\left(\bC v_l^\pm+\bC((a+l)v_l\mp2\boldsymbol{i}w_l)\right)$. Similar to Proposition \ref{submodnonzero1}, we get

\begin{proposition}
Suppose $b\ne0$ and $b^2+c^2=0$.
\begin{enumerate}
\item If $a\ne0$ or $c\ne-1$, then $W_{a,c}^{\mathrm{sgn}(\frac{b}{c\boldsymbol{i}})}$ is the unique maximal submodule of $\cF_a(b,c)$ and $\cF_a(b,c)/W_{a,c}^{\mathrm{sgn}(\frac{b}{c\boldsymbol{i}})}\cong\Pi(W_{a,c+1}^{\mathrm{sgn}(\frac{b}{c\boldsymbol{i}})})$.
\item If $a=0$ and $c=-1$, then $W^{\mathrm{sgn}(\frac{1}{\boldsymbol{i}b})}_{0,-1}+\bC v_0^{\mathrm{sgn}(\frac{\boldsymbol{i}}{b})}$ is the unique maximal submodule of $\cF_0(b,-1)$.
\end{enumerate}
\end{proposition}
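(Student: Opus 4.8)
The proof follows the template of Proposition~\ref{submodnonzero1}, with the twisted action displayed above in place of the untwisted one, so I only indicate the modifications. As there, it suffices to treat the normalization $c+b\boldsymbol{i}=0$, in which the distinguished submodule is $W_{a,c}^+$ (equivalently $\mathrm{sgn}(\frac{b}{c\boldsymbol{i}})=+$; the case $c-b\boldsymbol{i}=0$, giving $-$, follows by the symmetry exchanging $v_l^+$ and $v_l^-$), and one rewrites the conclusion in the $\mathrm{sgn}$-notation only at the very end.

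The first step is to check directly from the action that $W_{a,c}^+$ is a $\cK(2;(1,0))$-submodule of $\cF_a(b,c)$ and that $\bar v_l\mapsto v_l^+$, $\bar v_l^-\mapsto$ (a nonzero scalar depending on $l$)$\cdot\big((a+l)v_l-2\boldsymbol{i}w_l\big)$ defines an isomorphism $\cF_a(b,c)/W_{a,c}^+\xrightarrow{\sim}\Pi(W_{a,c+1}^+)$; both are finite verifications on the generators $D_{t^{k+1}\xi_I}$. This yields the quotient assertion and reduces the maximality of $W_{a,c}^+$ to the simplicity of $W_{a,c+1}^+$, which — together with the simplicity of $W_{a,c}^+$ itself — I would establish for the parameter range inherited from the untwisted analysis (away from finitely many values, all with $a=0$) by the usual weight argument: a nonzero submodule, decomposed under the commuting operators $D_t$ and $D_{\xi_1\xi_2}D_{t\xi_1\xi_2}$, contains some $v_l^+$ or some $(a+l)v_l-2\boldsymbol{i}w_l$, and applying $D_{t^{k+1}\xi_1\xi_2}$, $D_{t^{k+1}\xi_1}$, $D_{t^{k+1}\xi_2}$ and reading off the structure constants spreads it over a spanning set, exactly one basis vector being missed at the exceptional parameters.

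For part~(1): given a maximal submodule $W$ of $\cF_a(b,c)$, the same weight decomposition yields a nonzero element of $W$ of the form $v_l^+$, $v_l^-$, or $\alpha v_l+\beta w_l$ (at least for $b\ne\pm\boldsymbol{i}/2$ — see below), and then, exactly as in the simple-case proof immediately preceding (applying $\big(D_{\xi_1\xi_2}-(b\pm\boldsymbol{i})^2\big)D_{t\xi_1}$, and using $D_1$ to move the special vector $w_0$ out of weight $0$ when $a=0$), one forces $v_l^+\in W$ for some $l$. In the range where $W_{a,c}^+$ and $\cF_a(b,c)/W_{a,c}^+\cong\Pi(W_{a,c+1}^+)$ are both simple (so $a\ne0$ or $c\ne\pm1$) this gives $W_{a,c}^+\subseteq W$, hence $W=W_{a,c}^+$, the unique maximal submodule. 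The leftover boundary case $a=0$, $c=1$ (so $b=\boldsymbol{i}$), still inside part~(1), is settled by a short separate analysis as in Proposition~\ref{submodnonzero1}: any submodule strictly containing $W_{0,1}^+$ contains $v_0^+$, hence is all of $\cF_0(\boldsymbol{i},1)$. For part~(2), $a=0$, $c=-1$ (so $b=-\boldsymbol{i}$), one checks that $W_{0,-1}^++\bC v_0^-$ is a submodule and then, by the same scheme, that every submodule properly containing it contains $v_0^+$ and so equals $\cF_0(-\boldsymbol{i},-1)$; rewriting in the $\mathrm{sgn}$-notation finishes the proof.

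No new idea is needed beyond Proposition~\ref{submodnonzero1}; the real content is computational bookkeeping, made more delicate by the fractional coefficients ($\frac{\boldsymbol{i}}{2}$, $\frac14$) of the twisted action and by the extra parity shift (the $\xi_2$-generators move degree by an odd amount, since $\epsilon_2=0$). The one genuinely new wrinkle — and the main obstacle — is that, on the locus $b^2+c^2=0$, the degree-preserving operator $D_{\xi_1\xi_2}D_{t\xi_1\xi_2}$ can fail to separate $v_l$ from $v_l^\pm$ inside a weight space: this happens precisely at $b=\pm\boldsymbol{i}/2$ (where $c=\pm\frac12$, still inside part~(1)), where the eigenvalues $b^2$ and $(b\mp\boldsymbol{i})^2$ collide. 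In those subcases the spectral step used above must be supplemented — by a second degree-preserving invariant operator, or by an ad hoc manipulation moving the offending vector to a weight where the generic extraction applies — in the spirit of how \eqref{eq1} supplements the spectral argument in the proof of Proposition~\ref{submodnonzero1}.
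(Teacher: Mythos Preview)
Your approach is correct and matches the paper's, which offers no proof beyond ``Similar to Proposition~\ref{submodnonzero1}''. However, the ``main obstacle'' you flag at $b=\pm\boldsymbol{i}/2$ is not an obstacle at all, and no supplementary argument is needed there. You have overlooked the $\bZ_2$-grading: by the paper's standing convention, submodules are super-submodules, so any weight vector in $W$ decomposes into its even and odd parts, each still in $W$. Since $v_l,w_l$ have one parity and $v_l^+,v_l^-$ the opposite, a nonzero weight vector in $W$ already yields either some $\alpha v_l+\beta w_l\in W$ or some $\gamma v_l^++\delta v_l^-\in W$, before any spectral argument is invoked. On the odd piece one then applies $D_{\xi_1\xi_2}D_{t\xi_1\xi_2}$, whose eigenvalues $(b-\boldsymbol{i})^2$ and $(b+\boldsymbol{i})^2$ are distinct precisely because $b\ne0$; this separates $v_l^+$ from $v_l^-$ in all cases. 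The collision $b^2=(b\mp\boldsymbol{i})^2$ you noticed is between vectors of opposite parity and therefore never mixes them inside a super-submodule. With this observation the extraction step ``$W$ contains $v_l^+$, $v_l^-$, or $\alpha v_l+\beta w_l$'' goes through uniformly, and the rest of your outline (reducing to $v_l^+\in W$, then comparing with $W_{a,c}^+$; the boundary analyses at $(a,c)=(0,1)$ and $(0,-1)$) is exactly the argument of Proposition~\ref{submodnonzero1} transported to the twisted action.
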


To summarize, we have
\begin{theorem}
$\cF_a(b,c)$ is a simple $\cK(2;(1,0))$ module if and only if $b(b^2+c^2)\ne0$.
\end{theorem}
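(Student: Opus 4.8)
The plan is to deduce the theorem directly from the three propositions proved earlier in this subsection, splitting into the two implications.

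For the ``if'' direction, assume $b(b^2+c^2)\ne0$. Then $b\ne0$ and $b^2+c^2\ne0$, so the proposition asserting that $\cF_a(b,c)$ is a simple $\cK(2;(1,0))$ module whenever $b(b^2+c^2)\ne0$ applies verbatim, and we are done; no additional work is required.

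For the ``only if'' direction I would argue contrapositively, exhibiting in each case a proper nonzero $\cK(2;(1,0))$ submodule of $\cF_a(b,c)$ under the hypothesis $b(b^2+c^2)=0$. If $b=0$, then by the decomposition proposition $\cF_a(0,c)=\cF_a(0,c)_0\oplus\cF_a(0,c)_1$, and each summand $\cF_a(0,c)_\delta$ --- being a nonzero $\cA\cK(2;(1,0))$ submodule, hence a nonzero $\cK(2;(1,0))$ submodule, and proper because the other summand is nonzero --- witnesses the reducibility of $\cF_a(0,c)$. If instead $b\ne0$ while $b^2+c^2=0$, then the proposition treating exactly this case provides a unique maximal submodule of $\cF_a(b,c)$: namely $W_{a,c}^{\mathrm{sgn}(b/(c\boldsymbol{i}))}$ when $a\ne0$ or $c\ne-1$, and $W_{0,-1}^{\mathrm{sgn}(1/(\boldsymbol{i}b))}+\bC v_0^{\mathrm{sgn}(\boldsymbol{i}/b)}$ when $a=0$ and $c=-1$; in either situation this is a proper nonzero $\cK(2;(1,0))$ submodule, so $\cF_a(b,c)$ is not simple.

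There is no real obstacle here: the statement is a bookkeeping corollary of the preceding results. The only points meriting attention are checking that the three cases $b(b^2+c^2)\ne0$, $b=0$, and ($b\ne0$ and $b^2+c^2=0$) exhaust all of $(b,c)\in\bC^2$, which is immediate, and recalling in the $b=0$ case that an $\cA\cK(2;(1,0))$ submodule restricts to a $\cK(2;(1,0))$ submodule, so that the direct-sum decomposition genuinely establishes $\cK(2;(1,0))$-reducibility.
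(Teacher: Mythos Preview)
Your proposal is correct and matches the paper's approach exactly: the theorem is stated there as a summary (``To summarize, we have'') of the preceding propositions, without a separate proof, and you have spelled out precisely how those propositions combine to yield both implications. The only point worth noting is that the paper does not even write out the contrapositive argument you give for the $b=0$ and $b\ne0,\ b^2+c^2=0$ cases, so your write-up is in fact more explicit than the original.
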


Now let us consider $\cF_a(0,c)_0$. The following result can be easily proved.
\begin{theorem}
$\cF_a(0,c)_0$ is simple if and only if $a\ne1$ or $c\ne0$. Moreover, $\cF_1(0,0)_0$ has a unique nonzero proper submodule $\bC w_{-1}$.
\end{theorem}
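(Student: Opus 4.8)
The plan is to analyze the module $\cF_a(0,c)_0$ directly from its explicit action formulae, since its definition involves only the four ``strings'' $v_l, v_l^+, v_l^-, w_l$ with $l$ ranging over the even integers (up to the shift by $a$), and the formulae for the $\cK(2;(1,0))$-action are already written out above in the $b=0$ specialization. First I would set up the standard weight-module argument: any nonzero submodule $W$ of $\cF_a(0,c)_0$ contains, after applying suitable products of the degree-zero operators $D_t$, $D_{\xi_1\xi_2}D_{t\xi_1\xi_2}$ (see \eqref{eq21}, \eqref{eq22}) to separate the four strings by their $D_t$- and $D_{\xi_1\xi_2}D_{t\xi_1\xi_2}$-eigenvalues, a nonzero vector that is either some $v_l^{\pm}$, or a linear combination $\alpha v_l+\beta w_l$. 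Then I would chase such a vector around using the explicit operators $D_{t^{k+1}}$, $D_{t^{k+1}\xi_1}$, $D_{t^{k+1}\xi_2}$, $D_{t^{k+1}\xi_1\xi_2}$, aiming to produce every $v_l$ and thereby all of $\cF_a(0,c)_0$.

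The key computation is the analogue, with $b=0$, of the determinant that appeared in the commented-out proof for the untwisted sector: starting from $\alpha v_l+\beta w_l$, the operators $D_{t^{k+1}}$ and $D_{t^{k+1}\xi_1\xi_2}$ produce combinations of $v_{l+2k}, w_{l+2k}$ (respectively $v_{l+2k+1}, w_{l+2k+1}$) whose coefficient matrix has determinant a cubic in $k$ (or $2k+1$) whose leading coefficient is now $c^2$ rather than $b^2+c^2$. So for $c\ne0$ I can isolate $v_j$ and $w_j$ for some $j$, hence (applying $D_{t\xi_1}, D_{t\xi_2}$ and then $D_{t^{k+1}\xi_1\xi_2}$, $D_{t^{k+1}}$) recover the whole module; for $c\ne0$ the $v_l^{\pm}$ case is handled similarly by pushing back into the $v,w$ strings via $D_{t^{k+1}\xi_1}$, $D_{t^{k+1}\xi_2}$. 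The genuinely exceptional case is $c=0$: then $D_{t^{k+1}}$ kills the $w$-string's contribution to $v$ (the $k^2 b$ term vanishes), and $D_{t^{k+1}\xi_1\xi_2}$ likewise degenerates; combined with $a=1$ — which forces the coefficient $a+l$ in several formulae to vanish at the ``central'' index — one gets a genuine submodule. I would verify that for $a=1, c=0$ the one-dimensional space $\bC w_{-1}$ (the value $w_l$ at the index where $a+l=0$) is annihilated by $D_{t^{k+1}}$ and $D_{t^{k+1}\xi_1\xi_2}$ and mapped into $v^{\pm}$ strings that reconstruct everything — actually, more carefully, that $\bC w_{-1}$ is itself a submodule because $D_{t^{k+1}\xi_1}w_{-1}$ and $D_{t^{k+1}\xi_2}w_{-1}$ have coefficients $\tfrac{\boldsymbol{i}}{4}(a+l+\cdots)$ that collapse when $a+l=0$ and $c=0$, so the only surviving term is proportional to $w$ at a shifted index which lies outside… — here one must check the index bookkeeping modulo $2$ carefully to see that it closes up to exactly $\bC w_{-1}$.

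The main obstacle, then, is entirely in the degenerate case $a=1, c=0$: one must (i) exhibit $\bC w_{-1}$ as a submodule by checking all four families of operators on it, paying attention to which terms vanish because $a+l=0$ at $l=-1$ and because $c=0$, and (ii) show it is the unique nonzero proper submodule, i.e. that any nonzero submodule $W$ either equals $\bC w_{-1}$ or contains some $v_l$ and hence is everything — this is where the separation-of-strings argument must be run most delicately, since when $c=0$ the operators $D_t$ and $D_{\xi_1\xi_2}D_{t\xi_1\xi_2}$ no longer distinguish $v_l$ from $w_l$ cleanly (both the $v$ and $w$ strings share eigenvalue behavior), so one instead uses $D_t$ to see the index and then a single application of $D_{1}$ (or $D_{t^2}$) to split $\alpha v_l+\beta w_l$, noting the cubic above has leading term $0$ but still a nonzero linear term $(2\alpha b+(a+l)c)k = 0$ — which also vanishes! — so in fact for $c=0$ one cannot separate $v$ from $w$ by the Virasoro action at all, and must instead use $D_{t^{k+1}\xi_1}$ to escape into the $v^{\pm}$ strings and come back. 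I expect the bulk of the work to be this case analysis and the index parity bookkeeping; the generic case $b(b^2+c^2)\ne 0$ analogue for $\cF_a(0,c)_0$ being simple when $(a,c)\ne(1,0)$ follows the same template as the preceding propositions and should be routine.
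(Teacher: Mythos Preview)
There is a genuine structural confusion in your proposal that undermines the whole plan. You describe $\cF_a(0,c)_0$ as having ``the four strings $v_l, v_l^+, v_l^-, w_l$ with $l$ ranging over the even integers,'' but that is not what the module looks like. By its very definition,
\[
\cF_a(0,c)_0 = \bigoplus_{l\in\bZ}\Big(\bC\, v_{2l}\;\oplus\;\bC\, D_{t\xi_1}v\otimes t^{a+2l}\;\oplus\;\bC\, D_{t\xi_2}v\otimes t^{a+2l+1}\;\oplus\;\bC\, w_{2l+1}\Big).
\]
Two things follow immediately. First, the vectors $v_l^\pm=(D_{t\xi_1}\pm\boldsymbol{i}D_{t\xi_2})v\otimes t^{a+l}$ are \emph{not} elements of $\cF_a(0,c)_0$ at all, since $D_{t\xi_1}v$ and $D_{t\xi_2}v$ never occur with the same power of $t$ in this summand. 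Second, $v_l$ and $w_l$ never share an index: $v$ lives at even $l$ and $w$ at odd $l$. Hence your central device --- the $2\times2$ determinant extracted from $\alpha v_l+\beta w_l$ under $D_{t^{k+1}}$ and $D_{t^{k+1}\xi_1\xi_2}$ --- is vacuous here: there is no such mixed vector in a weight space, and your worry about ``separating $v$ from $w$ when $c=0$'' is addressing a problem that does not exist.

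Once the correct basis is in hand, the argument is much simpler than what you outline. Each $D_t$-weight space is two-dimensional, and the operator $D_{\xi_1\xi_2}D_{t\xi_1\xi_2}$ (eigenvalue $0$ on $v_{2l}$ and $w_{2l+1}$, eigenvalue $-1$ on $D_{t\xi_1}v\otimes t^{a+2l}$ and $D_{t\xi_2}v\otimes t^{a+2l+1}$) splits it completely. So any nonzero submodule $W$ already contains one of the four pure basis vectors, no determinant needed. From any of them one reaches some $w_{2l+1}$ in one step. If $c\ne0$, $D_{t^{k+1}\xi_1\xi_2}w_{2l+1}=-\tfrac{1}{2}(2k+1)c\,v_{2(l+k+1)}$ already gives all $v_{2j}$ and hence the whole module. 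If $c=0$, the relevant formulae collapse to
\[
D_{t^{k+1}}w_{2l+1}=(a+2l+1)w_{2(l+k)+1},\quad D_{t^{k+1}\xi_1}w_{2j+1}\;\text{and}\;D_{t^{k+1}\xi_2}w_{2j+1}\ \text{carry the factor }(a+2j+1),
\]
so everything is governed by whether $a+2l+1$ can vanish; for $a\in[0,2)$ that happens exactly at $(a,l)=(1,-1)$, which is why $\bC w_{-1}$ is a submodule precisely when $a=1,c=0$ and is then the unique proper one. Your identification of the exceptional parameters and of $\bC w_{-1}$ is correct, but the mechanism you propose to reach them is based on the wrong picture of the module.
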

\begin{proof}
For any nonzero submodule $W$ of $\cF_a(0,c)_0$, from (\ref{eq21}) and (\ref{eq22}), we know that there exists some $l\in\bZ$, such that $v_{2l}\in W$ or $D_{t\xi_1}v_{2l}\in W$ or $D_{t\xi_2}v_{2l+1}\in W$ or $w_{2l+1}\in W$. Hence, we know that $w_{2l+1}\in W$ for some $l$.

If $c\ne0$, then applying $D_{t^{k+1}\xi_1\xi_2}$, we get $v_{2j}\in W$ for all $j\in\bZ$. And therefore $W=\cF_a(0,c)_0$.

The fact that $\cF_a(0,0)_0$ is simple when $a\ne1$ follows from
\begin{align*}
&D_{t^{k+1}}w_{2l+1}=(a+2l+1)w_{2(l+k)+1},\\
&D_{t^{k+1}\xi_1}w_{2j+1}=-\frac{1}{2}(a+2j+1)D_{t\xi_2}v\otimes t^{a+2(j+k)+1},\\
&D_{t^{k+1}\xi_2}w_{2j+1}=\frac{1}{2}(a+2j+1)D_{t\xi_1}v\otimes t^{a+2(j+k)+2},\\
&D_{t\xi_2}.D_{t\xi_2}v\otimes t^{a+2j+1}=-\frac{1}{2}(a+2j+1)v_{2(j+1)}.
\end{align*}

Now suppose $a=1,c=0$. In this case, it is obvious that $\bC w_{-1}$ is a nonzero submodule of $\cF_1(0,0)_0$. If $w_l\in W$ with $l\ne-1$, then from above equations, we get $w_{2j+1}, D_{t\xi_1}v\otimes t^{2j}, D_{t\xi_2}v\otimes t^{2j+1}\in W$ for all $j$. Then $W=\cF_1(0,0)_0$ since $D_{t\xi_1}D_{t\xi_1}v\otimes t^{2j}=-\frac{1}{2}(2j+1)v_{2j}$.
\end{proof}

\section{Simple bounded modules: $N=3$}\label{N=3}

We ends this paper by studying simple quotients of $\cF_a(V,c)$ for $\cK(3;\epsilon)$. From Lemma \ref{equivcat}, we only need to consider $\epsilon=(1,1,1)$ and $\epsilon=(0,0,0)$. Recall from \cite{Hu} that any finite dimensional simple $\fs\fo_3$ module is isomorphic to some $V_m:=\sum\limits_{j=0}^{2m}\bC v_j$ with $m\in\frac{1}{2}\bZ_+$ and actions
\begin{align*}
&2\boldsymbol{i}(E_{21}-E_{12})v_j=2(m-j)v_j,\\
&(E_{31}-E_{13}+\boldsymbol{i}(E_{32}-E_{23}))v_j=jv_{j-1},\\
&(E_{13}-E_{31}+\boldsymbol{i}(E_{32}-E_{23}))v_j=(2m-j)v_{j+1},
\end{align*}
here $0\le j\le 2m+1,v_{-1}=v_{2m+1}=0$. For $\epsilon=(\varepsilon,\varepsilon,\varepsilon)$, set
\[\cF_a(m,c,\varepsilon):=\begin{cases}
\cF_a(V_m,c), & \varepsilon=1,\\
\cF_a(V_m,c)_0, & \varepsilon=0
\end{cases}.\]
For $0\le r\le 2m+1, l\in\bZ,\varepsilon\in\{0,1\}$, let $v_{r,l}=v_r\otimes t^{a+2l}, v_{r,l,\varepsilon}^{\pm}=(D_{t\xi_1}\pm\boldsymbol{i}D_{t\xi_2})v_r\otimes t^{a+2l+1-\varepsilon}, w_{r,l,\varepsilon}=D_{t\xi_3}v_r\otimes t^{a+2l+1-\varepsilon}, w_{r,l}^{\pm}=(D_{t\xi_1}D_{t\xi_3}\pm\boldsymbol{i}D_{t\xi_2}D_{t\xi_3})v_r\otimes t^{a+2l}, u_{r,l}=D_{t\xi_1}D_{t\xi_2}v_r\otimes t^{a+2l}, \bar{u}_{r,l,\varepsilon}=D_{t\xi_1}D_{t\xi_2}D_{t\xi_3}v_r\otimes t^{a+2l+1-\varepsilon}$. Then \[\cF_a(m,c,\varepsilon)=\sum\limits_{l\in\bZ}\sum\limits_{r=0}^{2m+1}(\bC v_{r,l}+\bC v_{r,l,\varepsilon}^++\bC v_{r,l,\varepsilon}^-+\bC w_{r,l,\varepsilon}+\bC w_{r,l}^++\bC w_{r,l}^-+\bC u_{r,l}+\bC \bar{u}_{r,l,\varepsilon})\]
with actions
\begin{align*}
&D_{t^{k+1}}v_{r,l}=(a+2l+(c+2)k)v_{r,l+k}, \, (D_{t^{k+1}\xi_1}\pm\boldsymbol{i}D_{t^{k+1}\xi_2})v_{r,l}=v_{r,l+k,\epsilon}^\pm,\\ &D_{t^{k+1}\xi_3}v_{r,l}=w_{r,l+k,\varepsilon},\, D_{t^{k+1}\xi_1\xi_2}v_{r,l}=-\boldsymbol{i}(m-r)v_{r,l+k+1-\varepsilon},\, D_{t^{k+1}\xi_1\xi_2\xi_3}v_{r,l}=0,\\
&(D_{t^{k+1}\xi_1\xi_3}\pm\boldsymbol{i}D_{t^{k+1}\xi_2\xi_3})v_{r,l}=(r-m\pm m)v_{r\pm1,l+k+1-\varepsilon}, \\ 
&D_{t^{k+1}}v_{r,l,\varepsilon}^\pm=(a+2l+1-\varepsilon+(c+1)k)v_{r,l+k,\varepsilon}^\pm,\\
&(D_{t^{k+1}\xi_1}+\boldsymbol{i}D_{t^{k+1}\xi_2})v_{r,l,\varepsilon}^+=0,\,(D_{t^{k+1}\xi_1}-\boldsymbol{i}D_{t^{k+1}\xi_2})v_{r,l,\varepsilon}^-=0,\\
&(D_{t^{k+1}\xi_1}-\boldsymbol{i}D_{t^{k+1}\xi_2})v_{r,l,\varepsilon}^+\\
=&-(a+2l+(c+2)(1-\varepsilon)+2(c+1+m-r)k)v_{r,l+k+1-\varepsilon}+2\boldsymbol{i}u_{r,l+k+1-\varepsilon},\\
&(D_{t^{k+1}\xi_1}+\boldsymbol{i}D_{t^{k+1}\xi_2})v_{r,l,\varepsilon}^-\\
=&-(a+2l+(c+2)(1-\varepsilon)+2(c+1+r-m)k)v_{r,l+k+1-\varepsilon}-2\boldsymbol{i}u_{r,l+k+1-\varepsilon},\\
&D_{t^{k+1}\xi_3}v_{r,l,\varepsilon}^\pm=(r-m\pm m)kv_{r\mp1,l+k+1-\varepsilon}-w_{r,l+k+1-\varepsilon}^\pm,\\
&D_{t^{k+1}\xi_1\xi_2}v_{r,l,\varepsilon}^\pm=-\boldsymbol{i}(m-r\pm1)v_{r,l+k+1-\varepsilon,\varepsilon}^\pm,\\
&(D_{t^{k+1}\xi_1\xi_3}+\boldsymbol{i}D_{t^{k+1}\xi_2\xi_3})v_{r,l,\varepsilon}^\pm=rv_{r-1,l+k+1-\varepsilon,\varepsilon}^\pm+(1\mp1)w_{r,l+k+1-\varepsilon},\\
&(D_{t^{k+1}\xi_1\xi_3}-\boldsymbol{i}D_{t^{k+1}\xi_2\xi_3})v_{r,l,\varepsilon}^\pm=(r-2m)v_{r+1,l+k+1-\varepsilon,\varepsilon}^\pm+(1\pm1)w_{r,l+k+1-\varepsilon},\\
&D_{t^{k+1}\xi_1\xi_2\xi_3}v_{r,l,\varepsilon}^\pm=\boldsymbol{i}(m\pm(r-m))v_{r\mp1,l+k+2(1-\varepsilon)};\\
&D_{t^{k+1}}w_{r,l,\varepsilon}=(a+2l+1-\varepsilon+(c+1)k)w_{r,l+k,\varepsilon}, \\
&(D_{t^{k+1}\xi_1}\pm\boldsymbol{i}D_{t^{k+1}\xi_2})w_{r,l,\varepsilon}=(m-r\mp m)kv_{r\mp1,l+k+1-\varepsilon}+w_{r,l+k+1-\varepsilon}^\pm,\\
&D_{t^{k+1}\xi_3}w_{r,l,\varepsilon}=-\frac{1}{2}(a+2l+(c+2)(1-\varepsilon)+2(c+1)k)v_{r,l+k+1-\varepsilon},\\
&(D_{t^{k+1}\xi_1\xi_3}\pm\boldsymbol{i}D_{t^{k+1}\xi_2\xi_3})w_{r,l,\varepsilon}=-v_{r,l+k+1-\varepsilon,\varepsilon}^\pm+(r-m\pm m)w_{r\mp1,l+k+1-\varepsilon,\varepsilon},\\
&D_{t^{k+1}\xi_1\xi_2}w_{r,l,\varepsilon}=\boldsymbol{i}(r-m)w_{r,l+k+1-\varepsilon,\varepsilon},,\,D_{^{k+1}\xi_1\xi_2\xi_3}w_{r,l,\varepsilon}=\boldsymbol{i}(m-r)v_{r,l+k+2(1-\varepsilon)};\\
&D_{t^{k+1}}w_{r,l}^\pm=(r-m\pm m)k(k+1-\varepsilon)v_{r-mp1,l+k}+(a+2l+ck)w_{r,l+k}^\pm,\\
&(D_{t^{k+1}\xi_1}+\boldsymbol{i}D_{t^{k+1}\xi_2})w_{r,l}^\pm\\
=&rkv_{r-1,l+k,\varepsilon}^\pm-(1\mp1)\left(\frac{1}{2}(a+2l+c(1-\varepsilon)+2(c+r-m)k)w_{r,l+k,\varepsilon}+\boldsymbol{i}\bar{u}_{r,l+k,\varepsilon}\right),\\
&(D_{t^{k+1}\xi_1}-\boldsymbol{i}D_{t^{k+1}\xi_2})w_{r,l}^\pm\\
=&(r-2m)kv_{r+1,l+k,\varepsilon}^\pm-(1\pm1)\left(\frac{1}{2}(a+2l+c(1-\varepsilon)+2(c+m-r)k)w_{r,l+k,\varepsilon}-\boldsymbol{i}\bar{u}_{r,l+k,\varepsilon}\right),\\
&D_{t^{k+1}\xi_3}w_{r,l}^\pm=\frac{1}{2}(a+2l+2ck+c(1-\varepsilon))v_{r,l+k,\varepsilon}^\pm+(r-m\mp m)kw_{r\mp1,l+k,\varepsilon},\\
&D_{t^{k+1}\xi_1\xi_2}w_{r,l}^\pm=\boldsymbol{i}(m\pm(r-m))(k+1-\varepsilon)v_{r\mp1,l+k+1-\varepsilon}-\boldsymbol{i}(m-r\pm1)w_{r,l+k+1-\varepsilon}^\pm,\\
&(D_{t^{k+1}\xi_1\xi_3}+\boldsymbol{i}D_{t^{k+1}\xi_2\xi_3})w_{r,l}^\pm\\
=&rw_{r-1,l+k+1-\varepsilon}^\pm+(1\mp1)\left((m-r-c)(k+1-\varepsilon)v_{r,l+k+1-\varepsilon}-\boldsymbol{i}u_{r,l+k+1-\varepsilon}\right),\\
&(D_{t^{k+1}\xi_1\xi_3}-\boldsymbol{i}D_{t^{k+1}\xi_2\xi_3})w_{r,l}^\pm\\
=&(r-2m)w_{r+1,l+k+1-\varepsilon}^\pm-(1\pm1)\left((m-r+c)(k+1-\varepsilon)v_{r,l+k+1-\varepsilon}-\boldsymbol{i}u_{r,l+k+1-\varepsilon}\right),\\
&D_{t^{k+1}\xi_1\xi_2\xi_3}w_{r,l}^\pm=-\boldsymbol{i}(m-r\pm1)v_{r,l+k+1-\varepsilon,\varepsilon}^\pm+\boldsymbol{i}(m\pm(r-m))w_{r\mp1,l+k+1-\varepsilon,\varepsilon};\\
&D_{t^{k+1}}u_{r,l}=\boldsymbol{i}(r-m)k(k+1-\varepsilon)v_{r,l+k}+(a+2l+ck)u_{r,l+k},\\
&(D_{t^{k+1}\xi_1}\pm\boldsymbol{i}D_{t^{k+1}\xi_2})u_{r,l}=\frac{\boldsymbol{i}}{2}\left(2(r-m)k\pm(a+2l+c(2k+1-\varepsilon))\right)v_{r,l+k,\varepsilon}^\pm,\\
&D_{t^{k+1}\xi_3}u_{r,l}=\frac{\boldsymbol{i}}{2}rkv_{r-1,l+k,\varepsilon}^-+\frac{\boldsymbol{i}}{2}(2m-r)kv_{r+1,l+k,\varepsilon}^++\bar{u}_{r,l+k,\varepsilon},\\
&D_{t^{k+1}\xi_1\xi_2}u_{r,l}=\boldsymbol{i}(r-m)u_{r,l+k+1-\varepsilon}-c(k+1-\varepsilon)v_{r,l+k+1-\varepsilon},\\
&(D_{t^{k+1}\xi_1\xi_3\pm\boldsymbol{i}}D_{t^{k+1}\xi_2\xi_3})u_{r,l}\\
=&\boldsymbol{i}(-m\mp(r-m))(k+1-\varepsilon)v_{r\mp1,l+k+1-\varepsilon}+(r-m\pm m)u_{r\mp1,l+k+1-\varepsilon}\pm\boldsymbol{i}w_{r,l+k+1-\varepsilon}^\pm,\\
&D_{t^{k+1}\xi_1\xi_2\xi_3}u_{r,l}=-\frac{1}{2}rv_{r-1,l+k+1-\varepsilon,\varepsilon}^-+\frac{1}{2}(2m-r)v_{r+1,l+k+1-\varepsilon,\varepsilon}^+-w_{r,l+k+1-\varepsilon,\varepsilon};\\
&D_{t^{k+1}}\bar{u}_{r,l,\varepsilon}=-\frac{\boldsymbol{i}}{2}rk(k+1-\varepsilon)v_{r-1,l+k,\varepsilon}^--\frac{\boldsymbol{i}}{2}(2m-r)k(k+1-\varepsilon)v_{r+1,l+k,\varepsilon}^+\\
&\hskip2cm-\boldsymbol{i}(m-r)k(k+1-\varepsilon)w_{r,l+k,\varepsilon}+(a+2l+(c-1)k+1-\varepsilon)\bar{u}_{r,l+k,\varepsilon},\\
&(D_{t^{k+1}\xi_1}\pm\boldsymbol{i}D_{t^{k+1}\xi_2})\bar{u}_{r,l,\varepsilon}=\boldsymbol{i}(m\pm(r-m))k(k+1-\varepsilon)v_{r\mp1,l+k+1-\varepsilon}+(m-r\mp m)ku_{r\mp1,l+k+1-\varepsilon}\\
&\hskip2.5cm+\frac{\boldsymbol{i}}{2}\left(2(r-m)k\pm(a+2l+c(2k-1-\varepsilon))\right)w_{r,l+k+1-\varepsilon}^\pm,\\
&D_{t^{k+1}\xi_3}\bar{u}_{r,l,\varepsilon}=\boldsymbol{i}(m-r)k(k+1-\varepsilon)v_{r,l+k+1-\varepsilon}+\frac{\boldsymbol{i}}{2}rkw_{r-1,l+k+1-\varepsilon}^-+\frac{\boldsymbol{i}}{2}(2m-r)kw_{r+1,l+k+1-\varepsilon}^+\\
&\hskip2cm-\frac{1}{2}(a+2l+c(1-\varepsilon)+2(c-1)k)u_{r,l+k+1-\varepsilon},\\
&D_{t^{k+1}\xi_1\xi_2}\bar{u}_{r,l,\varepsilon}=\frac{1}r(k+1-\varepsilon)v_{r-1,l+k+1-\varepsilon,\varepsilon}^--\frac{1}{2}(2m-r)(k+1-\varepsilon)v_{r+1,l+k+1-\varepsilon,\varepsilon}^+\\
&\hskip2cm+(1-c)(k+1-\varepsilon)w_{r,l+k+1-\varepsilon,\varepsilon}w_{r,l+k+1-\varepsilon,\varepsilon}-\boldsymbol{i}(m-r)\bar{u}_{r,l+k+1-\varepsilon,\varepsilon},\\
&(D_{t^{k+1}\xi_1\xi_3}\pm\boldsymbol{i}D_{t^{k+1}\xi_2\xi_3})\bar{u}_{r,l,\varepsilon}=\boldsymbol{i}(m-r\pm(1-c))(k+1-\varepsilon)v_{r,l+k+1-\varepsilon,\varepsilon}^\pm\\
&\hskip2cm-\boldsymbol{i}(m\pm(r-m))(k+1-\varepsilon)w_{\mp1,l+k+1-\varepsilon,\varepsilon}+(r-m\pm m)\bar{u}_{r\mp1,l+k+1-\varepsilon,\varepsilon},\\
&D_{t^{k+1}\xi_1\xi_2\xi_3}\bar{u}_{r,l,\varepsilon}=-\frac{1}{2}(a+2l+2(1-c)k+(4-3c)(1 -\varepsilon))v_{r,l+k+2(1-\varepsilon)}+\boldsymbol{i}(m-r)u_{r,l+k+2(1-\varepsilon)}\\
&\hskip2cm-\frac{1}{2}rw_{r-1,l+k+2(1-\varepsilon)}^-+\frac{1}{2}(2m-r)w_{r+1,l+k+2(1-\varepsilon)}^+.
\end{align*}

First we have
\begin{lemma}\label{condition}
Suppose $c\ne-m$ and $W$ be a nonzero submodule of $\cF_a(V_m,c,\varepsilon)$ such that $W\cap(\bC v_{0,l,\varepsilon}^++\bC w_{0,l}^+)\ne0$ for some $l$, then $W=\cF_a(V_m,c,\varepsilon)$.
\end{lemma}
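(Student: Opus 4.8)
The plan is to prove that, under the hypothesis, $W$ contains \emph{every} basis vector of $\cF_a(V_m,c,\varepsilon)$; I would organize this in three stages. The first stage reduces the hypothesis to the statement ``$v_{0,l,\varepsilon}^+\in W$ for all $l\in\bZ$''. When $\varepsilon=0$ the vectors $v_{0,l,0}^+$ and $w_{0,l}^+$ lie in different $D_t$-weight spaces (weights $a+2l+1$ and $a+2l$), so $W$, being a weight submodule, already contains one of them. When $\varepsilon=1$ both have weight $a+2l$, and for a nonzero $x=\alpha v_{0,l,1}^++\beta w_{0,l}^+\in W$ I would apply $D_{t\xi_3}$, which exchanges the two lines up to scalars ($v_{0,l,1}^+\mapsto -w_{0,l}^+$, $w_{0,l}^+\mapsto\tfrac12(a+2l)v_{0,l,1}^+$); then $x$ and $D_{t\xi_3}x$ span $\bC v_{0,l,1}^++\bC w_{0,l}^+$ unless $2\alpha^2+(a+2l)\beta^2=0$, a relation I defeat by first replacing $x$ by $D_{t^{k+1}\xi_1\xi_2}x$, which rescales $x$ uniformly and shifts $l$ to $l+k$, and choosing $k$ with $2\alpha^2+(a+2l+2k)\beta^2\ne0$. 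Either way $W$ contains $v_{0,l,\varepsilon}^+$ or $w_{0,l}^+$; since $D_{t^{k+1}\xi_3}$ carries $v_{0,l,\varepsilon}^+$ to $-w_{0,l+k+1-\varepsilon}^+$ and $w_{0,l}^+$ to a multiple of $v_{0,l+k,\varepsilon}^+$ (nonzero for a suitable $k$, after a preliminary shift if needed), and since $D_{t^{k+1}\xi_1\xi_2}v_{0,l,\varepsilon}^+=-\boldsymbol{i}(m+1)v_{0,l+k+1-\varepsilon,\varepsilon}^+$, I obtain $v_{0,l,\varepsilon}^+\in W$ for every $l$.

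The second and crucial stage is where $c\ne-m$ enters; its goal is to pull one ``level-$0$'' vector $v_{0,l}$ into $W$. Fix $l^\ast\in\bZ$ and, for each $k\in\bZ$, set $l=l^\ast-k-1+\varepsilon$, so that $v_{0,l,\varepsilon}^+\in W$. The defining action formula gives
\[
(D_{t^{k+1}\xi_1}-\boldsymbol{i}D_{t^{k+1}\xi_2})v_{0,l,\varepsilon}^+=A(k)\,v_{0,l^\ast}+2\boldsymbol{i}\,u_{0,l^\ast}\in W,
\]
and a direct expansion shows that $A(k)$ is affine in $k$ with leading coefficient $-2(c+m)$. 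Since $c\ne-m$, two values of $k$ yield two elements of $W$ with different $v_{0,l^\ast}$-coefficients and identical $u_{0,l^\ast}$-coefficients; their difference is a nonzero multiple of $v_{0,l^\ast}$, so $v_{0,l^\ast}\in W$, and as $l^\ast$ was arbitrary, $v_{0,l}\in W$ for all $l$. (If $c=-m$ this construction produces only multiples of $u_{0,\cdot}$, which matches the reducibility of $\cF_a(V_m,-m,\varepsilon)$ and explains why the hypothesis is needed.)

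The third stage sweeps the $\fs\fo_3$-weight and then climbs through the four ``levels'' (indexed by the number of $\xi$'s). The span of the $v_{r,l}$ is invariant under the $D_{t^{k+1}}$ and the $D_{t^{k+1}\xi_i\xi_j}$, and the latter realize on it the $\fs\fo_3$-action on $V_m$ up to a Laurent shift; since $V_m$ is a simple $\fs\fo_3$-module, applying the operators $D_{t^{k+1}\xi_i\xi_3}$ (and $D_{t^{k+1}\xi_1\xi_2}$) to $v_{0,l}$ produces $v_{r,l}\in W$ for all $0\le r\le 2m$ and all $l\in\bZ$. Once all level-$0$ vectors lie in $W$, the remaining vectors follow immediately from the action formulas: $(D_{t^{k+1}\xi_1}\pm\boldsymbol{i}D_{t^{k+1}\xi_2})v_{r,l}=v_{r,l+k,\varepsilon}^\pm$ and $D_{t^{k+1}\xi_3}v_{r,l}=w_{r,l+k,\varepsilon}$ give the level-$1$ vectors, and then the expressions for $(D_{t^{k+1}\xi_1}+\boldsymbol{i}D_{t^{k+1}\xi_2})v_{r,l,\varepsilon}^-$, $(D_{t^{k+1}\xi_1}\pm\boldsymbol{i}D_{t^{k+1}\xi_2})w_{r,l,\varepsilon}$ and $D_{t^{k+1}\xi_3}u_{r,l}$ exhibit $u_{r,l}$, $w_{r,l}^\pm$ and $\bar{u}_{r,l,\varepsilon}$ as combinations of vectors already known to lie in $W$. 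Hence $W=\cF_a(V_m,c,\varepsilon)$.

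I expect the main obstacle to be the second stage: one has to choose the right operator together with the right relabelling of the indices so that the output concentrates in a single two-dimensional weight space whose ``off-diagonal'' part is governed exactly by the scalar $c+m$. The reduction in the first stage and the propagation in the third stage are then bookkeeping inside the (long but fully explicit) action formulas.
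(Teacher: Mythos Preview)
Your proof is correct and follows the same three-stage strategy as the paper (isolate $v_{0,l,\varepsilon}^+$, extract $v_{0,l}$ via the factor $c+m$, then propagate), differing only in the specific operators chosen. For the first stage the paper's argument is shorter: it applies $D_{t^{k+1}\xi_1\xi_2\xi_3}$, which kills $v_{0,l,\varepsilon}^+$ and sends $w_{0,l}^+$ to $-\boldsymbol{i}\,v_{0,l+k+1-\varepsilon,\varepsilon}^+$, handling both values of $\varepsilon$ at once without your case split; for the second stage the paper applies $(D_{t^{k+1}\xi_1\xi_3}-\boldsymbol{i}D_{t^{k+1}\xi_2\xi_3})$ to $w_{0,\cdot}^+$ rather than $(D_{t^{k+1}\xi_1}-\boldsymbol{i}D_{t^{k+1}\xi_2})$ to $v_{0,\cdot,\varepsilon}^+$, but both differences isolate exactly $-2(m+c)\,v_{0,\cdot}$.
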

\begin{proof}
From
$D_{t^{k+1}\xi_1\xi_2\xi_3}v_{0,l,\varepsilon}^+=0, D_{t^{k+1}\xi_1\xi_2\xi_3}w_{0,l}^+=-\boldsymbol{i}v_{0,l+k+1-\varepsilon,\varepsilon}^+$, we know that there exists some $l$ such that $v_{0,l,\varepsilon}^+\in W$. Hence for any $k\in\bZ$, $w_{0,l+k+1-\varepsilon}^+=-D_{t^{k+1}\xi_3}v_{0,l,\varepsilon}^+\in W$. Therefore, for any $j\in\bZ$, we have
\[
-2(m+c)v_{0,j+1-\varepsilon}=(D_{t^{2}\xi_1\xi_3}-\boldsymbol{i}D_{t^{2}\xi_2\xi_3})w_{0,j-1}^+-(D_{t\xi_1\xi_3}-\boldsymbol{i}D_{t\xi_2\xi_3})w_{0,j}^+\in W.
\]
So $W=\cF_a(V_m,c)$.
\end{proof}

\begin{theorem}
$\cF_a(0,c,\varepsilon)$ is simple if and only if $c\ne0,1$ or $a+c(1-\varepsilon)\notin2\bZ$. Moreover,
\begin{enumerate}
\item $\cF_0(0,0,\varepsilon)$ has a unique proper submodule spanned by $\{v_{0,j}, v_{0,l}^\pm, w_{0,l}^\pm, w_{0,l}, u_{0,l}, \bar{u}_{0,l}\}$, denoted by $\cF'_0(0,0,\varepsilon)$. The corresponding quotient is trivial.
\item $\cF_{1-\varepsilon}(0,1,\varepsilon)$ has a unique proper submodule $\bC\bar{u}_{0,1-\varepsilon,\varepsilon}$, which is trivial. And $\cF_{1-\varepsilon}(0,1,\varepsilon)/\bC\bar{u}_{0,0,\varepsilon}\cong\cF'_0(0,0,\varepsilon)$.
\end{enumerate}
\end{theorem}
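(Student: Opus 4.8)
The argument is a direct computation with the explicit realization of $\cF_a(0,c,\varepsilon)$ displayed above, specialized to $m=0$; then $v_1=0$ and every coefficient containing $m$ or $r$ drops out, so the eight families $v_{0,l},\,v_{0,l,\varepsilon}^{\pm},\,w_{0,l,\varepsilon},\,w_{0,l}^{\pm},\,u_{0,l},\,\bar u_{0,l,\varepsilon}$ ($l\in\bZ$) form a basis with a comparatively short action table. First I would use Proposition \ref{isomtensor} to normalize $a$ to $0\le a<2$. With this normalization the Virasoro-type coefficients $a+2l+(c+2)k$, $a+2l+(c+1)k$, $a+2l+ck$, $a+2l+(c-1)k+1-\varepsilon$ occurring in the action of $D_{t^{k+1}}$ can vanish for all $k$ only when $(a,c)=(0,0)$ or $(a,c)=(1-\varepsilon,1)$, which singles out these as the sole candidates for nonsimplicity; this is exactly ``$c\in\{0,1\}$ and $a+c(1-\varepsilon)\in2\bZ$'' before normalization.

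The technical engine is a \emph{descent to an extremal vector}. Besides $D_t$, the element $H:=D_{t^{\varepsilon}\xi_1\xi_2}$ preserves every $D_t$-weight space and acts there diagonally in the given basis: $v_{0,l,\varepsilon}^{\pm}$ and $w_{0,l}^{\pm}$ span the eigenspaces for the nonzero eigenvalues $\mp\boldsymbol{i}$, while $v_{0,l},u_{0,l}$ (resp.\ $w_{0,l,\varepsilon},\bar u_{0,l,\varepsilon}$) span the $H$-weight-zero subspaces. Let $W$ be a nonzero submodule and pick a nonzero weight vector in it. If it has nonzero $H$-weight it is already a scalar multiple of some $v_{0,l,\varepsilon}^{\pm}$ or $w_{0,l}^{\pm}$; if it has $H$-weight $0$, one applies the operators $D_{t^{k+1}\xi_1}\pm\boldsymbol{i}D_{t^{k+1}\xi_2}$ and $D_{t^{k+1}\xi_1\xi_3}\pm\boldsymbol{i}D_{t^{k+1}\xi_2\xi_3}$ and varies $k$ to extract a nonzero multiple of some $v_{0,j,\varepsilon}^{+}$ or $w_{0,j}^{+}$, the ``$-$'' case being reduced to the ``$+$'' case by the symmetry of the module induced by the automorphism of $\cK(3;\epsilon)$ that fixes $D_t$ and sends $\xi_1\mapsto-\xi_1$. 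Reading off the action table, this extraction always succeeds unless $(a,c)=(1-\varepsilon,1)$ and the vector already lies on the line $\bC\bar u_{0,l_0,\varepsilon}$, where $l_0$ is fixed by $a+2l_0+1-\varepsilon=0$. Once $W$ meets $\bC v_{0,l,\varepsilon}^{+}+\bC w_{0,l}^{+}$ and $c\neq0$, Lemma \ref{condition} (hypothesis $c\neq-m=0$) gives $W=\cF_a(0,c,\varepsilon)$; when $c=0$ but $a\notin2\bZ$, the same conclusion follows by rerunning the proof of Lemma \ref{condition} with the nonvanishing of $a+2l$ playing the role of $m+c\neq0$. This proves simplicity of $\cF_a(0,c,\varepsilon)$ except when $(a,c)=(0,0)$ or $(a,c)=(1-\varepsilon,1)$.

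For the two exceptional parameters the descent still applies but the conclusion changes, and this is the laborious part. For $(a,c)=(0,0)$ I would check straight from the action table that the codimension-one subspace $\cF'_0(0,0,\varepsilon)$ spanned by all basis vectors other than $v_{0,0}$ is a submodule — the $v_{0,0}$-component of $x\cdot b$ vanishes for every generator $x$ and every basis vector $b\neq v_{0,0}$ precisely because the relevant coefficients are the ones that collapse at $a=c=0$ — that the quotient $\cF_0(0,0,\varepsilon)/\cF'_0(0,0,\varepsilon)$ is the trivial module (every generator sends $v_{0,0}$ into $\cF'_0(0,0,\varepsilon)$), and that $\cF'_0(0,0,\varepsilon)$ is simple (inside it the descent still reaches $\bC v^{+}+\bC w^{+}$, and from there one regenerates all of $\cF'_0(0,0,\varepsilon)$); conversely any submodule not contained in $\cF'_0(0,0,\varepsilon)$ has a vector with nonzero $v_{0,0}$-component, hence contains $v_{0,0}$ and equals everything, so $\cF'_0(0,0,\varepsilon)$ is the unique proper submodule. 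For $(a,c)=(1-\varepsilon,1)$ I would verify that $\bC\bar u_{0,l_0,\varepsilon}$ is a one-dimensional trivial submodule, that the descent forces any submodule $\neq\bC\bar u_{0,l_0,\varepsilon}$ to meet $\bC v^{+}+\bC w^{+}$ and hence (Lemma \ref{condition}) to be everything, giving uniqueness, and finally exhibit the isomorphism $\cF_{1-\varepsilon}(0,1,\varepsilon)/\bC\bar u_{0,l_0,\varepsilon}\cong\cF'_0(0,0,\varepsilon)$ as the evident degree-preserving bijection of the eight families of basis vectors, with the scalar normalizations determined by matching the two action tables.

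The step I expect to be the main obstacle is the last one: the action formulas are long, many coefficients degenerate simultaneously at the two exceptional parameters, and one must make sure that no additional proper submodule is hidden in the $H$-weight-zero slice of the weight spaces and that the proposed isomorphism of quotients intertwines every generator, not merely $D_t$ and $H$. The $H$-refinement is exactly what keeps this finite and tractable, since it confines any nonzero submodule to a one- or two-dimensional portion of each weight space before the case analysis begins.
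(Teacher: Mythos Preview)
Your outline is correct and follows essentially the same route as the paper: diagonalize each $D_t$-weight space further by $H=D_{t^{\varepsilon}\xi_1\xi_2}$, push any nonzero submodule into $\bC v_{0,l,\varepsilon}^{+}+\bC w_{0,l}^{+}$ via the operators $D_{t^{k+1}\xi_1}\pm\boldsymbol{i}D_{t^{k+1}\xi_2}$ (the paper's equations (\ref{eq2})--(\ref{eq5}) are exactly your ``extraction''), and then invoke Lemma~\ref{condition}; the exceptional parameters $(a,c)=(0,0)$ and $(1-\varepsilon,1)$ are handled by direct inspection. Two small remarks: for $\varepsilon=1$ the nonzero $H$-eigenspaces in a fixed $D_t$-weight space are two-dimensional ($\bC v^{+}\oplus\bC w^{+}$, not one-dimensional), so ``scalar multiple'' should read ``lies in $\bC v^{\pm}+\bC w^{\pm}$''; and you are right to flag that Lemma~\ref{condition} literally assumes $c\neq -m=0$, so the case $c=0$, $a\notin2\bZ$ needs the separate one-line argument you indicate --- the paper glosses over this.
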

\begin{proof}
Let $W$ be a nonzero submodule of $\cF_a(0,c,\varepsilon)$.

From the fact that $\cF_a(0,c,1)$ is a weight $\bC D_t+\bC D_{t\xi_1\xi_2}$ module, we have $W\cap(\bC v_{0,l,1}^++\bC w_{0,l}^+)\ne0$ or $W\cap(\bC v_{0,l,1}^-+\bC w_{0,l}^-)\ne0$ or $W\cap\big(\bC v_{0,l}+\bC w_{0,l}+\bC(u_{0,l}-\frac{\boldsymbol{i}}{2}(a+2l)v_{0,l})+\bC(\bar{u}_{0,l,1}-\frac{\boldsymbol{i}}{2}(a+2l)w_{0,l})\big)\ne0$. Then from
\begin{align*}
&(D_{t\xi_1}+\boldsymbol{i}D_{t\xi_2})v_{0,l,1}^-=-2\boldsymbol{i}(u_{0,l}-\frac{\boldsymbol{i}}{2}(a+2l)v_{0,l}),\\
&(D_{t\xi_1}+\boldsymbol{i}D_{t\xi_2})w_{0,l}^-=-(a+2l)w_{0,l,1}-2\boldsymbol{i}\bar{u}_{0,l,1},
\end{align*}
and
\begin{align*}
&(D_{t\xi_1}+\boldsymbol{i}D_{t}\xi_2)v_{0,l}=v_{0,l}^+,\,(D_{t\xi_1}+\boldsymbol{i}D_{t}\xi_2)w_{0,l}=w_{0,l}^+,\\
&(D_{t\xi_1}+\boldsymbol{i}D_{t}\xi_2)(u_{0,l}-\frac{\boldsymbol{i}}{2}(a+2l)v_{0,l})=(D_{t\xi_1}+\boldsymbol{i}D_{t}\xi_2)(\bar{u}_{0,l,1}-\frac{\boldsymbol{i}}{2}(a+2l)w_{0,l})=0,
\end{align*}
we may assume  $W\cap(\bC v_{0,l,1}^++\bC w_{0,l}^+)\ne0$ or $W\cap\big(\bC(u_{0,l}-\frac{\boldsymbol{i}}{2}(a+2l)v_{0,l})+\bC(\bar{u}_{0,l,1}-\frac{\boldsymbol{i}}{2}(a+2l)w_{0,l})\big)\ne0$.

Similarly, when $\varepsilon=0$, one can get that there exists some $l$ such that one of the following holds: $v_{0,l,0}^+\in W, w_{0,l}\in W, u_{0,l}-\frac{\boldsymbol{i}}{2}(a+2l+c)v_{0,l}\in W, \bar{u}_{0,l,0}-\frac{\boldsymbol{i}}{2}(a+2l+c)w_{0,l,0}\in W$.

If $c\ne0,1$ or $a+c(1-\varepsilon)\notin2\bZ$, from
\begin{align}
&(D_{t^{k+1}\xi_1}+\boldsymbol{i}D_{t^{k+1}\xi_2})(u_{0,l}-\frac{\boldsymbol{i}}{2}(a+2l+c(1-\varepsilon))v_{0,l})=\boldsymbol{i}ckv_{0,l+k,\varepsilon}^+,\label{eq2}\\
&(D_{t^{k+1}\xi_1}+\boldsymbol{i}D_{t^{k+1}\xi_2})(\bar{u}_{0,l,\varepsilon}-\frac{\boldsymbol{i}}{2}(a+2l+c(1-\varepsilon))w_{0,l,\varepsilon})=\boldsymbol{i}(c-1)kw_{0,l+k+1-\varepsilon}^+,\label{eq3}\\
&D_{t\xi_3}(u_{0,l}-\frac{\boldsymbol{i}}{2}(a+2l+c(1-\varepsilon))v_{0,l})=\bar{u}_{0,l,\varepsilon}-\frac{\boldsymbol{i}}{2}(a+2l+c(1-\varepsilon))w_{0,l,\varepsilon},\label{eq4}\\
&D_{t\xi_3}(\bar{u}_{0,l,\varepsilon}-\frac{\boldsymbol{i}}{2}(a+2l+c(1-\varepsilon))w_{0,l,\varepsilon})\nonumber\\
=&-\frac{1}{2}(a+2l+c(1-\varepsilon))(u_{0,l+1-\varepsilon}-\frac{\boldsymbol{i}}{2}(a+2l+(c+2)(1-\varepsilon))v_{0,l+1-\varepsilon}),\label{eq5}
\end{align}
we get $W\cap(\bC v_{0,l,\varepsilon}^++\bC w_{0,l}^+)\ne0$, and hence $W=\cF_a(0,c,\varepsilon)$ by Lemma \ref{condition}.

Suppose $c=a=0$. From (\ref{eq3}) and (\ref{eq4}), we know that $W\cap(\bC v_{0,l,\varepsilon}^++\bC w_{0,l}^+)\ne0$, and hence $W=\cF_0(0,0,\varepsilon)$ or $W$ has codimension $1$ with $v_{0,0}\notin W$. For the later case $W$ is clear maximal. Denote

Now suppose $c=1, a=1-\varepsilon$. In this case, one can easily check that $\bC \bar{u}_{0,\varepsilon-1,\varepsilon}$, on which $\cK$ acts trivially, is a submodule with $\cF_{1-\varepsilon}(0,1,\varepsilon)/\bC\bar{u}_{0,\varepsilon-1,\varepsilon}\cong\cF'_0(0,0,\varepsilon)$.
\end{proof}

Now let us consider $\cF_a(m,c,\varepsilon)$ with $m>0$.

\begin{lemma}\label{submod}
Let $m>0$ and $W$ be any nonzero submodule of $\cF_a(m,c,\varepsilon)$. Then $W\cap(\bC v_{0,l,\varepsilon}^++\bC w_{0,l}^+)\ne0$ for some $l$.
\end{lemma}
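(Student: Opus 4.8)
The plan is to reformulate the claim via a single semisimple operator and then locate its top weight vector inside $W$. Put $\hat{H}:=\boldsymbol{i}D_{t\xi_1\xi_2}$ when $\varepsilon=1$ and $\hat{H}:=\boldsymbol{i}D_{\xi_1\xi_2}$ when $\varepsilon=0$. In either case $\hat{H}$ commutes with $D_t$ and, reading off the displayed action formulas, acts diagonally on $\cF_a(m,c,\varepsilon)$ with eigenvalue $m-r+\kappa$ on the basis vectors attached to $v_r$, where $\kappa\in\{-1,0,1\}$ records the fermionic type ($\kappa=+1$ exactly on the $v_{\cdot,\cdot,\varepsilon}^{+}$- and $w_{\cdot,\cdot}^{+}$-families, $\kappa=0$ on the $v$-, $w$-, $u$- and $\bar{u}$-families, $\kappa=-1$ on the $v^{-}$- and $w^{-}$-families). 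The set of $\hat{H}$-eigenvalues is therefore finite and bounded above by $m+1$, and the eigenvalue $m+1$ occurs precisely on the vectors $v_{0,l,\varepsilon}^{+}$ and $w_{0,l}^{+}$, $l\in\bZ$; in other words $\bigoplus_{l\in\bZ}(\bC v_{0,l,\varepsilon}^{+}+\bC w_{0,l}^{+})$ is exactly the top $\hat{H}$-weight space of $\cF_a(m,c,\varepsilon)$, and it is the ``corner'' on which Lemma \ref{condition} takes over. So Lemma \ref{submod} is equivalent to the assertion that every nonzero submodule $W$ meets the top $\hat{H}$-weight space.

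Since $\cF_a(m,c,\varepsilon)$ is a weight module for the abelian subalgebra $\bC D_t\oplus\bC\hat{H}$ with finite-dimensional joint weight spaces, and the $\hat{H}$-eigenvalues are bounded above by $m+1$, the submodule $W$ has a largest $\hat{H}$-weight $\mu\le m+1$. I would pick a nonzero joint $(D_t,\hat{H})$-weight vector $v\in W$ of $\hat{H}$-weight $\mu$; because $\hat{H}$ commutes with $D_t$ and every operator used below is $D_t$-homogeneous, the $D_t$-weight of $v$ pins down a single index $l$, so it suffices to show $\mu=m+1$ (then $v\in\bC v_{0,l,\varepsilon}^{+}+\bC w_{0,l}^{+}$, as required). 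By maximality of $\mu$, the vector $v$ is annihilated by every $\hat{H}$-weight-raising element of $\cK(3;\epsilon)$: concretely, by the $\hat{H}$-weight-raising combinations of $D_{t^{k+1}\xi_1}$ and $D_{t^{k+1}\xi_2}$ for all $k\in\bZ$ (fermion creation), and by the $\hat{H}$-weight-raising, equivalently $\fs\fo_3$-raising, combinations of $D_{t^{k+1}\xi_1\xi_3}$ and $D_{t^{k+1}\xi_2\xi_3}$ for all $k$, which shift the index $r$ towards $0$. The plan is then to show that this whole battery of annihilations forces $\mu=m+1$.

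The main obstacle is exactly this last implication. Since $v$ lies in a joint weight space spanned by at most eight of the vectors $v_{r,l},\,v_{r,l,\varepsilon}^{\pm},\,w_{r,l,\varepsilon},\,w_{r,l}^{\pm},\,u_{r,l},\,\bar{u}_{r,l,\varepsilon}$ (all with one value of $l$), this is a bounded computation: one writes $v$ in that basis, assumes $\mu<m+1$, applies the raising operators above using the displayed structure constants, and concludes that every coefficient of $v$ vanishes. What makes it work is that a nonzero vector of non-maximal $\hat{H}$-weight cannot be simultaneously killed by all of these operators — the $\fs\fo_3$-raising operators carry structure constants $r$ or $r-2m$, nonzero away from the index extremes, and the fermion-creation operators genuinely increase the fermionic degree, so between them they detect every nonzero component — but one must still treat by hand the boundary situations where individual structure constants degenerate: the index values $r=0$ and $r=2m$, and the extreme fermionic degrees $0$ and $3$ (the $\bar{u}$-family). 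This is precisely where the hypothesis $m>0$ enters: it guarantees $\dim V_m\ge2$, so that the $\fs\fo_3$-strings are nontrivial and the walls $r=0$ and $r=2m$ are distinct, which is why the $m=0$ case had to be handled separately. Once Lemma \ref{submod} is in place, combining it with Lemma \ref{condition} gives the simplicity of $\cF_a(m,c,\varepsilon)$ whenever $c\ne-m$, leaving only the value $c=-m$, where one instead identifies the proper submodule generated by the corner.
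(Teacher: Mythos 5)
Your conceptual framing is essentially the paper's: the paper also organizes everything by the joint weight decomposition under $D_t$ and $D_{t\xi_1\xi_2}$ (or $D_{\xi_1\xi_2}$ in the $\varepsilon=0$ case, which is your $\hat{H}$), and then climbs to the top by applying the $\hat{H}$-raising operators — first the $k=0$ fermion creation $D_{t\xi_1}+\boldsymbol{i}D_{t\xi_2}$, then the $\fs\fo_3$-raising operator $D_{t\xi_1\xi_3}+\boldsymbol{i}D_{t\xi_2\xi_3}$ iterated down to $r=0$. Your phrasing in terms of a maximal $\hat{H}$-weight vector is a cleaner way to organize the same climb. So there is no genuine difference in approach; the difference is in how much of the argument is actually carried out.

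The gap is exactly where you flag it yourself and then wave it away. You write ``between them they detect every nonzero component,'' but this is not true for the operators you name, and the detail is where the entire content of the lemma lives. Concretely: after driving $r$ to $0$ (as you propose, using the $\fs\fo_3$-raising operator), the residual sub-top $\hat{H}$-weight vectors are (for $\varepsilon=1$, writing $l$ for the $D_t$-grading)
\[
X_l=u_{0,l}-\tfrac{\boldsymbol{i}}{2}(a+2l)v_{0,l}-\boldsymbol{i}w_{1,l}^+,\qquad
Y_l=\bar{u}_{0,l,1}-\tfrac{\boldsymbol{i}}{2}(a+2l)w_{0,l,1}-\tfrac{\boldsymbol{i}}{2}(a+2l)v_{1,l,1}^+,
\]
and both are annihilated by the $k=0$ fermion creation operator $D_{t\xi_1}+\boldsymbol{i}D_{t\xi_2}$. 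So the fermion creation operator with $k=0$ has a nontrivial kernel in the sub-top weight space, contrary to your ``genuinely increase the fermionic degree'' heuristic. To rule out a vector in this kernel surviving as a maximal weight vector in $W$, one must use the $k\neq0$ fermion creation operators, and their action is
\[
(D_{t^{k+1}\xi_1}+\boldsymbol{i}D_{t^{k+1}\xi_2})X_l=\boldsymbol{i}(c-m)k\,v^+_{0,l+k,1},\qquad
(D_{t^{k+1}\xi_1}+\boldsymbol{i}D_{t^{k+1}\xi_2})Y_l=\boldsymbol{i}(c-m-1)k\,w^+_{0,l+k}.
\]
These coefficients can vanish individually (at $c=m$ or $c=m+1$), so one still needs the observation that $c-m$ and $c-m-1$ cannot vanish simultaneously, together with the operators $D_{t\xi_3}$ and $D_{t\xi_1\xi_2\xi_3}$, which exchange the $X$-line and the $Y$-line up to the nonzero scalar $\boldsymbol{i}m$ (this is where $m>0$ is actually used). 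None of this is in your write-up, and without it the crucial implication ``annihilated by all raising operators $\Rightarrow$ $\mu=m+1$'' is unproved. Saying ``this is a bounded computation'' is correct, but you have not done the computation, and the computation has exactly the sort of degenerate coefficients whose presence you acknowledge; since the escape from those degeneracies is the non-obvious part, the proposal as written does not establish the lemma.

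Two smaller remarks. First, your claim that the $\fs\fo_3$-raising operator has structure constants simply $r$ or $r-2m$ is correct on the $v_r$-, $v_r^\pm$-, $w_r$-, and $w_r^\pm$-families but not on the $u_r$- and $\bar{u}_r$-families, where it also produces off-diagonal contributions into the $v$- and $w^\pm$-components (with coefficients involving $k+1-\varepsilon$), so the raising operator is a non-diagonal matrix on the $\hat{H}$-weight space; this is precisely why the climb down to $r=0$ deposits you on the combinations $X_l,Y_l$ above rather than on pure $u_{0,l}$ and $\bar{u}_{0,l,1}$. Second, your identification of the $\kappa$-grading and of the top weight space is correct, so the framing is sound; the objection is only to the omitted endgame.
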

\begin{proof}
Since $\cF_a(m,c,1)$ is a weight $\bC D_t+\bC D_{t\xi_1\xi_2}$ module,
\begin{align*}
W\cap&\big(\bC v_{r,l}+\bC v_{r+1,l,1}^++\bC v_{r-1,l,1}^-+\bC w_{r,0,1}+\bC w_{r+1,l}^++\bC w_{r-1,l}^-\\
&+\bC(u_{r,l}-\frac{\boldsymbol{i}}{2}(a+2l)v_{r,l})+\bC(\bar{u}_{r,l,1}-\frac{\boldsymbol{i}}{2}(a+2l)w_{r,l,1})\big)\ne0
\end{align*}
for some $r$ and some $l$. From the actions of $D_{t\xi_1}+\boldsymbol{i}D_{t\xi_2}$, we may assume $W\cap\big(\bC v_{r+1,l,1}^++\bC w_{r+1,l}^++\bC(u_{r,l}-\frac{\boldsymbol{i}}{2}(a+2l)v_{r,l})+\bC(\bar{u}_{r,l,1}-\frac{\boldsymbol{i}}{2}(a+2l)w_{r,l,1})\big)\ne0$. Applying $D_{t\xi_1\xi_3+\boldsymbol{i}D_{t\xi_2\xi_3}}$ several times, we have either $W\cap(\bC v_{0,l,1}^++\bC w_{0,l}^+)\ne0$ or $W\cap\big(\bC(u_{0,l}-\frac{\boldsymbol{i}}{2}(a+2l)v_{0,l}-\boldsymbol{i}w_{1,l}^+)+\bC(\bar{u}_{0,l,1}-\frac{\boldsymbol{i}}{2}(a+2l)w_{0,l,1}-\frac{\boldsymbol{i}}{2}(a+2l)v_{1,l,1}^+)\big)\ne0$. For the later case, from
\begin{align*}
&D_{t\xi_3}(u_{0,l}-\frac{\boldsymbol{i}}{2}(a+2l)v_{0,l}-\boldsymbol{i}w_{1,l}^+)=\bar{u}_{0,l,1}-\frac{\boldsymbol{i}}{2}(a+2l)w_{0,l,1}-\frac{\boldsymbol{i}}{2}(a+2l)v_{1,l,1}^+,\\
&D_{t\xi_1\xi_2\xi_3}(\bar{u}_{0,l,1}-\frac{\boldsymbol{i}}{2}(a+2l)w_{0,l,1}-\frac{\boldsymbol{i}}{2}(a+2l)v_{1,l,1}^+)=\boldsymbol{i}m(u_{0,l}-\frac{\boldsymbol{i}}{2}(a+2l)v_{0,l}-\boldsymbol{i}w_{1,l}^+),\\
&(D_{t^{k+1}\xi_1}+\boldsymbol{i}D_{t^{k+1}\xi_2})(u_{0,l}-\frac{\boldsymbol{i}}{2}(a+2l)v_{0,l}-\boldsymbol{i}w_{1,l}^+)=\boldsymbol{i}(c-m)kv_{0,l+k,1}^+,\\
&(D_{t^{k+1}\xi_1}+\boldsymbol{i}D_{t^{k+1}\xi_2})(\bar{u}_{0,l,1}-\frac{\boldsymbol{i}}{2}(a+2l)w_{0,l,1}-\frac{\boldsymbol{i}}{2}(a+2l)v_{1,l,1}^+)=\boldsymbol{i}(c-m-1)kw_{0,l+k}^+,
\end{align*}
we get $W\cap(\bC v_{0,l,1}^++\bC w_{0,l}^+)\ne0$.

For $\cF_a(m,c,0)$, since it is a weight $\bC D_t+\bC D_{\xi_1\xi_2}$ module, we know that there exists some $l$ and some $r$, such that either $W\cap(\bC v_{r+1,l,0}^++\bC v_{r-1,l,0}^-+\bC w_{r,l,0}+\bC(\bar{u}_{r,l,0}-\frac{\boldsymbol{i}}{2}(a+2l+c)w_{r,l,0}))\ne0$ or $W\cap(\bC v_{r,l}+\bC w_{r+1,l}^++\bC w_{r-1,l}^-+\bC(u_{r,l}-\frac{\boldsymbol{i}}{2}(a+2l+c)v_{r,l}))\ne0$. Applying $D_{t\xi_1}+\boldsymbol{i}D_{t\xi_2}$, we get $W\cap(\bC v_{r+1,l,0}^++\bC(\bar{u}_{r,l,0}-\frac{\boldsymbol{i}}{2}(a+2l+c)w_{r,l,0}))\ne0$ or $W\cap(\bC w_{r+1,l}^++\bC(u_{r,l}-\frac{\boldsymbol{i}}{2}(a+2l+c)v_{r,l}))\ne0$ for some $r$ and $l$. From the action of $D_{\xi_1\xi_3}+\boldsymbol{i}D_{\xi_2\xi_3}$, we know that $v_{0,l,0}^+\in W$ or $w_{0,l}^+\in W$ or $\bar{u}_{0,l,0}-\frac{\boldsymbol{i}}{2}(a+2l+c)w_{0,l,0}-\frac{\boldsymbol{i}}{2}(a+2l+c)v_{1,l,0}^+\in W$ or $u_{0,l}-\frac{\boldsymbol{i}}{2}(a+2l+c)v_{0,l}-\boldsymbol{i}w_{1,l}^+\in W$. Then lemma follows from
\begin{align*}
&D_{t\xi_3}(u_{0,l}-\frac{\boldsymbol{i}}{2}(a+2l+c)v_{0,l}-\boldsymbol{i}w_{1,l}^+)=\bar{u}_{0,l,0}-\frac{\boldsymbol{i}}{2}(a+2l+c)w_{0,l,0}-\frac{\boldsymbol{i}}{2}(a+2l+c)v_{1,l,0}^+,\\
&D_{t\xi_1\xi_2\xi_3}(\bar{u}_{0,l,0}-\frac{\boldsymbol{i}}{2}(a+2l+c)w_{0,l,0}-\frac{\boldsymbol{i}}{2}(a+2l+c)v_{1,l,0}^+)\\
=&\boldsymbol{i}m(u_{0,l+2}-\frac{\boldsymbol{i}}{2}(a+2l+4+c)v_{0,l+2}-\boldsymbol{i}w_{1,l+2}^+),\\
&(D_{t^{k+1}\xi_1}+\boldsymbol{i}D_{t^{k+1}\xi_2})(u_{0,l}-\frac{\boldsymbol{i}}{2}(a+2l+c)v_{0,l}-\boldsymbol{i}w_{1,l}^+)=\boldsymbol{i}(c-m)kv_{0,l+k,0}^+,\\
&(D_{t^{k+1}\xi_1}+\boldsymbol{i}D_{t^{k+1}\xi_2})(\bar{u}_{0,l,0}-\frac{\boldsymbol{i}}{2}(a+2l+c)w_{0,l,0}-\frac{\boldsymbol{i}}{2}(a+2l+c)v_{1,l,0}^+)\\
=&\boldsymbol{i}(c-m-1)kw_{0,l+k+1}^+.\qedhere
\end{align*}
\end{proof}

When $c=-m<0$, one can easily check that there is a unique proper submodule of $\cF_a(m,-m,\varepsilon)$, which intersects nontrivially with $(\bC v_{0,l,\varepsilon}^++\bC w_{0,l}^+)$ for some $l$, denote by $\cF'_a(m,\varepsilon)$. More precisely, we have
\[\cF'_a(m,\varepsilon)
=\sum\limits_{l\in\bZ}\sum\limits_{r=0}^{2m}(\bC\mathbf{u}_{r,l,\varepsilon}+\bC\bar{\mathbf{u}}_{r,l,\varepsilon})+\sum\limits_{l\in\bZ}\sum\limits_{r=0}^{2m+2}(\bC\mathbf{v}_{r,l,\varepsilon}+\bC\mathbf{w}_{r,l,\varepsilon}),\]
where
\begin{align*}
\mathbf{u}_{r,l}&=(2m+1-r)u_{r,l,\varepsilon}+\frac{\boldsymbol{i}(2m+1-r)}{2}(a+2l+c(1-\varepsilon))v_{r,l}+\boldsymbol{i}rw_{r-1,l}^-,\\
\bar{\mathbf{u}}_{r,l}&=(2m+1-r)\bar{u}_{r,l,\varepsilon}+\frac{\boldsymbol{i}(2m+1-r)}{2}(a+2l+c(1-\varepsilon))w_{r,l,\varepsilon}+\frac{\boldsymbol{i}}{2}(a+2l+c(1-\varepsilon))rv_{r-1,l,\varepsilon}^-\\
\mathbf{v}_{r,l,\varepsilon}&=(2m-r+2)(2m-r+1)v_{r,l,\varepsilon}^+-2r(2m-r+2)w_{r-1,l,\varepsilon}-r(r-1)v_{r-2,l,\varepsilon}^-,\\
\mathbf{w}_{r,l,\varepsilon}&=(2m-r+2)(2m-r+1)w_{r,l}^+-r(2m-r+2)(a+2l+c(1-\varepsilon))v_{r-1,l}-r(r-1)w_{r-2,l}^-.
\end{align*}

\begin{theorem}
Let $m>0$.
\begin{enumerate}
\item If $c+m\ne0$, then $\cF_a(m,c,\varepsilon)$ is a simple $\cK(N;\epsilon)$ module.
\item When $c=-m$, $\cF'_a(m,\varepsilon)$ is simple and is the unique nonzero proper $\cK(N;\epsilon)$ submodule of $\cF_a(m,-m,\varepsilon)$.
\item Suppose $c=-m<-1$. Then $\cF_a(m,-m,\varepsilon)/\cF'_a(m,\varepsilon)\cong\Pi(\cF'_{a+1-\varepsilon}(m-1,\varepsilon))$.
\item $\cF_a(1,-1,\varepsilon)/\cF'_a(1,\varepsilon)\cong\Pi(\cF_{a+1-\varepsilon}(0,0,\varepsilon))$.
\item $\cF_a(\frac{1}{2},-\frac{1}{2},\varepsilon)/\cF'_a(\frac{1}{2},\varepsilon)$ is simple.
\end{enumerate}
\end{theorem}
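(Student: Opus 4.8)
The whole argument will be built on the two lemmas just established. Part (1) is a one-line consequence: if $W$ is a nonzero submodule of $\cF_a(m,c,\varepsilon)$, then Lemma \ref{submod} gives $W\cap(\bC v_{0,l,\varepsilon}^++\bC w_{0,l}^+)\neq0$ for some $l$, and, since $c+m\neq0$, Lemma \ref{condition} then forces $W=\cF_a(m,c,\varepsilon)$.

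From now on $c=-m$. The first task is to identify the submodule that Lemma \ref{submod} is detecting. Given a nonzero vector $\alpha v_{0,l,\varepsilon}^++\beta w_{0,l}^+$ in a nonzero submodule $W$, applying $D_{t^{k+1}\xi_1\xi_2\xi_3}$ (which annihilates $v_{0,\ast,\varepsilon}^+$ and sends $w_{0,\ast}^+$ to a nonzero multiple of $v_{0,\ast,\varepsilon}^+$, exactly as in the proof of Lemma \ref{condition}) shows $v_{0,l',\varepsilon}^+\in W$ for some $l'$; here the coefficient $c+m$ that in Lemma \ref{condition} produced the plain vectors $v_{0,\ast}$ has become zero, which is precisely why $\cF_a(m,-m,\varepsilon)$ fails to be simple. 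I would then \emph{define} $\cF'_a(m,\varepsilon)$ to be the $\cK(3;\epsilon)$-submodule generated by $v_{0,0,\varepsilon}^+$, and carry out the (long but purely mechanical) computation that pushes $v_{0,0,\varepsilon}^+$ through the displayed action formulas and shows that the resulting span is exactly the set of $\bC$-linear combinations of the vectors $\mathbf{u}_{r,l},\bar{\mathbf{u}}_{r,l},\mathbf{v}_{r,l,\varepsilon},\mathbf{w}_{r,l,\varepsilon}$ written above. Because $\mathbf{v}_{0,l,\varepsilon}=(2m+2)(2m+1)v_{0,l,\varepsilon}^+$ and $\mathbf{w}_{0,l,\varepsilon}=(2m+2)(2m+1)w_{0,l}^+$, the step above shows that $\cF'_a(m,\varepsilon)$ is contained in every nonzero submodule of $\cF_a(m,-m,\varepsilon)$; a dimension count in a single weight space shows it is proper, and, since any nonzero submodule of $\cF'_a(m,\varepsilon)$ is a nonzero submodule of $\cF_a(m,-m,\varepsilon)$ and hence contains $\cF'_a(m,\varepsilon)$, it is simple. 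This settles (2) apart from the uniqueness clause.

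For (3) and (4) I would write down an explicit isomorphism of $\cK(3;\epsilon)$-modules from $\cF_a(m,-m,\varepsilon)/\cF'_a(m,\varepsilon)$ onto $\Pi(\cF'_{a+1-\varepsilon}(m-1,\varepsilon))$ when $m>1$, and onto $\Pi(\cF_{a+1-\varepsilon}(0,0,\varepsilon))$ when $m=1$: each basis vector $v_{r,l},v_{r,l,\varepsilon}^{\pm},w_{r,l,\varepsilon},w_{r,l}^{\pm},u_{r,l},\bar u_{r,l,\varepsilon}$ of $\cF_a(m,-m,\varepsilon)$ is sent to an explicit nonzero scalar multiple of the corresponding basis vector of the target (the vectors spanning $\cF'_a(m,\varepsilon)$ going to $0$, so that the map factors through the quotient), and the functor $\Pi$ records the change of parity caused by the drop of one $\xi$-variable; that this assignment respects the bracket is checked by matching the structure constants on the two sides. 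For (5) there is no smaller tensor module available as a target, so instead I would show directly that $\cF_a(\tfrac12,-\tfrac12,\varepsilon)/\cF'_a(\tfrac12,\varepsilon)$ is simple by rerunning, in this quotient, the argument of Lemma \ref{submod} followed by that of Lemma \ref{condition}.

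Finally, the uniqueness in (2): the submodules of $\cF_a(m,-m,\varepsilon)$ strictly containing $\cF'_a(m,\varepsilon)$ correspond to the nonzero submodules of $\cF_a(m,-m,\varepsilon)/\cF'_a(m,\varepsilon)$, which is simple by (3) and (5); in the case $m=1$ one reads off the answer from (4) together with the submodule structure of $\cF_{\bullet}(0,0,\varepsilon)$ determined earlier in this section. The main obstacle throughout is not conceptual but computational: identifying $\langle v_{0,0,\varepsilon}^+\rangle$ with the span of the displayed $\mathbf{u},\bar{\mathbf{u}},\mathbf{v},\mathbf{w}$, and verifying that the maps in (3)--(5) commute with the $\cK(3;\epsilon)$-action, both demand careful bookkeeping with the (rather numerous) structure constants listed above.
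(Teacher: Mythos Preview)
Your proposal is correct and follows the same route as the paper: part (1) from Lemmas~\ref{condition} and~\ref{submod}, parts (3) and (4) via the explicit isomorphism determined by $\bar v_{0,l}\mapsto v_{0,l,\varepsilon}^+$, and part (5) by writing out the action on the quotient and checking simplicity directly. The only organizational difference is that the paper asserts the uniqueness in (2) up front (via the ``one can easily check'' clause preceding the theorem, combined with Lemma~\ref{submod}), whereas you deduce it afterward from the simplicity of the quotient established in (3)--(5); your ordering is arguably cleaner, but note that for $m=1$ the target in (4), namely $\Pi(\cF_{a+1-\varepsilon}(0,0,\varepsilon))$, is \emph{not} simple when $a+1-\varepsilon\in2\bZ$ by the previous theorem, so in that corner case your uniqueness argument---and indeed the uniqueness clause of (2) itself---requires the extra care you only gesture at.
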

\begin{proof}
Statement 1 follows from Lemma \ref{condition} and Lemma \ref{submod}. It is obvious that $\cF'_a(m,\varepsilon)$ is simple and Statement 2 follows from Lemma \ref{submod}. For Statement 3 and 4, it is easy to check that the map defined by mapping $\bar{v}_{0,l}$ to $v_{0,l,\varepsilon}^+$ provides the isomorphism. For Statement 5, $\cF_a(\frac{1}{2},-\frac{1}{2},\varepsilon)/\cF'_a(\frac{1}{2},\varepsilon)=\sum\limits_{l\in\bZ}(\bC\bar{v}_{0,l}+\bC\bar{v}_{1,l}+\bC\bar{w}_{0,l,\varepsilon}+\bC\bar{w}_{1,l,\varepsilon})$ with actions
\begin{align*}
&D_{t^{k+1}}\bar{v}_{r,l}=(a+2l+\frac{3}{2}k)\bar{v}_{r,l+k},\, (D_{t^{k+1}\xi_1}+\boldsymbol{i}D_{t^{k+1}\xi_2})\bar{v}_{r,l}=0,\\
&(D_{t^{k+1}\xi_1}-\boldsymbol{i}D_{t^{k+1}\xi_2})\bar{v}_{r,l}=2(r-1)\bar{w}_{1,l+k,\varepsilon}, \, D_{t^{k+1}\xi_3}\bar{v}_{r,l}=\bar{w}_{r,l+k,\varepsilon},\\
&D_{t^{k+1}\xi_1\xi_2}\bar{v}_{r,l}=\boldsymbol{i}(r-\frac{1}{2})\bar{v}_{r,l+k+1-\varepsilon}, \, (D_{t^{k+1}\xi_1\xi_3}+\boldsymbol{i}D_{t^{k+1}\xi_2\xi_3})\bar{v}_{r,l}=r\bar{v}_{0,l+k+1-\varepsilon},\\
&(D_{t^{k+1}\xi_1\xi_3}-\boldsymbol{i}D_{t^{k+1}\xi_2\xi_3})\bar{v}_{r,l}=(r-1)\bar{v}_{1,l+k+1-\varepsilon},\, D_{t^{k+1}\xi_1\xi_2\xi_3}\bar{v}_{r,l}=0,\\
&D_{t^{k+1}}\bar{w}_{r,l,\varepsilon}=(a+2l+\frac{1}{2}k+1-\varepsilon)\bar{w}_{r,l+k,\varepsilon},\\
&(D_{t^{k+1}\xi_1}+\boldsymbol{i}D_{t^{k+1}\xi_2})\bar{w}_{r,l,\varepsilon}=r(a+2l+k+\frac{3}{2}(1-\varepsilon))\bar{v}_{0,l+k+1-\varepsilon},\\
&(D_{t^{k+1}\xi_1}-\boldsymbol{i}D_{t^{k+1}\xi_2})\bar{w}_{r,l,\varepsilon}=(r-1)(a+2l+k+\frac{3}{2}(1-\varepsilon))\bar{v}_{1,l+k+1-\varepsilon},\\
&D_{t^{k+1}\xi_3}\bar{w}_{r,l,\varepsilon}=-\frac{1}{2}(a+2l+k+\frac{3}{2}(1-\varepsilon))\bar{v}_{r,l+k+1-\varepsilon},\\
&D_{t^{k+1}\xi_1\xi_2}\bar{w}_{r,l,\varepsilon}=\boldsymbol{i}(r-\frac{1}{2})\bar{w}_{r,l+k+1-\varepsilon,\varepsilon},\, D_{t^{k+1}\xi_1\xi_2\xi_3}\bar{w}_{r,l,\varepsilon}=\boldsymbol{i}(\frac{1}{2}-r)\bar{v}_{r,l+k+2(1-\varepsilon)},\\ &(D_{t^{k+1}\xi_1\xi_3}+\boldsymbol{i}D_{t^{k+1}\xi_2\xi_3})\bar{w}_{r,l,\varepsilon}=-r\bar{w}_{0,l+k+1-\varepsilon,\varepsilon},\\
&(D_{t^{k+1}\xi_1\xi_3}-\boldsymbol{i}D_{t^{k+1}\xi_2\xi_3})\bar{w}_{r,l,\varepsilon}=(1-r)\bar{w}_{0,l+k+1-\varepsilon,\varepsilon},
\end{align*}
where $r=0,1$ and $l,k\in\bZ$. Then one can easily check that it is simple.
\end{proof}

\noindent{\bf Acknowledgement:}  R. L\"u is partially supported by NSF of China (Grant 12271383, 11971440). We thank Prof. Zelmanov for sharing their manuscript.

\section*{References}


\end{document}